\renewcommand{\epsilon}{\varepsilon}
\newcommand{\R}{\ensuremath{\mathbb{R}}}
\def\e{\varepsilon}
\newtheorem {theorem} {Theorem} 
\newtheorem {proposition} {Proposition}
\newtheorem {lemma} {Lemma}
\newtheorem {remark} {Remark}
\newtheorem {mtheorem} {Theorem}
\begin{document}
\allowdisplaybreaks
\title[Invariant Tori and Periodic Orbits  in the FitzHugh-Nagumo System]{Invariant Tori and Periodic Orbits  in the FitzHugh-Nagumo System}

\begin{abstract}
The FitzHugh-Nagumo system is a $4$-parameter family of $3$D vector field used for modeling neural excitation and nerve impulse propagation. The origin represents a Hopf-zero equilibrium in the FitzHugh-Nagumo system for two classes of parameters. In this paper, we employ recent techniques in averaging theory to investigate, besides periodic solutions, the bifurcation of invariant tori within the aforementioned families. We provide explicit generic conditions for the existence of these tori and analyze their stability properties.
Furthermore, we employ the backward differentiation formula to solve the stiff differential equations and provide numerical simulations for each of the mentioned results.
\end{abstract}

\author{Murilo R. C\^{a}ndido$^1$, Douglas D. Novaes$^2$, Nasrin Sadri$^3$}

\address{$^1$ Department of Mathematics and Computer Science, Faculty of Science and Technology - São Paulo State University (UNESP), Rua Roberto Simonsen, 305 - Centro Educacional, 19060-900, P. Prudente, São Paulo, Brazil} 
\email{mr.candido@unesp.br}

\address{$^2$ Universidade
Estadual de Campinas (UNICAMP), Departamento de Matem\'{a}tica - Instituto de Matem\'{a}tica, Estat\'{i}stica e Computa\c{c}\~{a}o Cient\'{i}fica (IMECC), Rua S\'{e}rgio Baruque de Holanda, 651, Cidade Universit\'{a}ria Zeferino Vaz, 13083--859, Campinas, SP, Brasil} 
\email{ddnovaes@unicamp.br}

\address{$^3$ School of Mathematics, Statistics and Computer Science, College of Science, University of Tehran, Tehran, Iran and School of Mathematics, Institute for Research in Fundamental Sciences (IPM), P. O. Box 19395-5746, Tehran, Iran} 
\email{n.sadri@ut.ac.ir}

\keywords{FitzHugh-Nagumo System, Periodic Solution, Invariant Tori, Averaging Theory}

\subjclass[2010]{Primary: 37G15, 34C23, 34C29}


\maketitle

\section{Introduction and Statement of the Main Results}\label{sec1}

FitzHugh \cite{FitzHugh} and Nagumo et. al \cite{Nagumo} introduced the following $4$-parameter family of $3$D vector field as a model to describe the excitation of a nerve axon and the propagation  of the messages carried by neurons along an axon:
\begin{equation}\label{FZ}
\begin{aligned}
\dot{x}&=z,\\
\dot{y}&=b (x- d y),\\
\dot{z}&=(x-a) (x-1) x+y+cz,
\end{aligned}
\end{equation}
where $a,b,c,d \in \mathbb{R}.$

The averaging method has found success in examining the periodic behavior of the FitzHugh-Nagumo system. For instance, in \cite{RodrigoLlibreb}, Euz\'{e}bio et al. identified parameter conditions in \eqref{FZ} leading to a Hopf-zero equilibrium at singular points. They subsequently employed the averaging theory to establish conditions for guaranteeing the existence of periodic solutions. Similarly, in \cite{periodicFN}, Vidal and Llibre employed the averaging theory to derive conditions ensuring periodic solutions in the periodic FitzHugh-Nagumo system.
 
In this paper, our focus is on system \eqref{FZ} around the origin when it exhibits a Hopf-zero equilibrium, characterized by the presence of a pair of purely complex conjugate eigenvalues and a single zero eigenvalue in the associated Jacobian matrix. 

The origin is an equilibrium of Hopf-zero type of system \eqref{FZ} for two families of parameters (see \cite{RodrigoLlibreb}), namely:

\begin{itemize}
\item
\textbf{Family A:} For $(a,b,c)=\left(-\dfrac{1}{d},\dfrac{1}{d}\sqrt{\dfrac{1}{d}-w^2},\sqrt{\dfrac{1}{d}-w^2}\right)$, $0<d\not\in \{0,1\} ,$ $w>0$, and $d(1-d w^2)>0,$ the Jacobian matrix of \eqref{FZ} at the origin has a zero eigenvalue and a pair of purely imaginary conjugate eigenvalues $\pm iw.$

\item \textbf{Family B:} For $(a,b,c)=\left(-w^2,0,0\right)$, $d\in\R$, and $w>0,$ the Jacobian matrix of \eqref{FZ} at the origin has a zero eigenvalue and a pair of purely imaginary conjugate $\pm iw.$
\end{itemize}

Our primary objective is to identify, besides the periodic solutions already obtained in \cite{RodrigoLlibreb}, invariant tori within the aforementioned families, as well as to investigate the stability properties of these invariant structures.
The torus bifurcation is obtained through the occurrence of a Neimark-Sacker bifurcation in the Poincaré map. Specifically, an invariant torus for a differential equation corresponds to an invariant closed curve in the Poincaré map, and a Neimark-Sacker bifurcation in a 1-parameter family of planar maps implies the creation of an invariant closed curve from a fixed point as its stability changes.

\subsection{Results on Family A} 
In \cite{RodrigoLlibreb}, for Family A, it was proven that a periodic solution is born at the origin as $\varepsilon$ passes through $0$. For the sake of completeness, we re-prove the existence of such a periodic solution and provide its stability. We also show that this solution converges to $(0,0,0)$ as $\varepsilon$ converges to zero. Furthermore, we establish the existence of a torus bifurcated from the periodic solution $\phi(t,\varepsilon)$. 

Assume that
\begin{small}
\begin{eqnarray}\label{abcA}
(a,b,c)=\left(-\frac{1}{d}+\varepsilon \alpha_1+\varepsilon^2 \alpha_2,\frac{1}{d}\sqrt{\frac{1}{d}-w^2}+\varepsilon \beta_1+\varepsilon^2 \beta_2,
\sqrt{\frac{1}{d}-w^2}+\varepsilon \gamma_1+\varepsilon^2 \gamma_2\right)+\mathcal{O}(\varepsilon^2),
\end{eqnarray}
\end{small}
where $0< d\not\in \{0,1\},$ $w>0$ and $\varepsilon, \alpha_1, \alpha_2, \beta_1,\beta_2, \gamma_1, \gamma_2 \in \mathbb{R},$ and define
\begin{align}\label{lazem}
l_0&:=2 w^4 d \left(d\beta_1-\gamma_1\right)^2+2{\alpha_1}^2 \left(d w^2-1\right),\\\nonumber
l_1&:=w d^2\beta_0 (15 w^2 d^2+41 d^2-82 d+41)-6\pi (d-1)^2 (w^2 d-1).
\end{align}

\begin{mtheorem}\label{teoA}
Let $(a,b,c)$ be given by \eqref{abcA}. Then, the following are valid:
\begin{itemize}
\item[I.]
If $l_0(w^2 d-1)>0$ and $l_0 \neq 0$, for $|\varepsilon| \neq 0$ small enough, the FitzHugh-Nagumo system \eqref{FZ} possesses a periodic solution $\phi(t,\varepsilon)$ such that $\phi(t,\varepsilon)\rightarrow (0,0,0)$ as $\varepsilon \rightarrow 0$. In addition, if $ \beta_1 d-\gamma_1>0$ (resp. $ \beta_1 d-\gamma_1<0$), then the  periodic solution $\phi$ is attracting (resp. repelling).

\item[II.] If $l_1 \neq 0$, then a smooth curve $\gamma(\varepsilon)=\beta_1 d+\mathcal{O}(\varepsilon)$ and an interval $J_{\varepsilon}$ containing $\gamma(\varepsilon)$ exist such that when $\gamma_1$ passes through $\gamma(\varepsilon)$, the FitzHugh-Nagumo system \eqref{FZ}  undergoes a bifurcation of a unique invariant torus from the periodic solution $\phi(t,\gamma(\varepsilon),\varepsilon)$, where $\phi(t,\gamma_1,\varepsilon)=\phi(t,\varepsilon)$. In addition, if $\gamma_1 \in J_{\varepsilon}$ and $ \gamma_1-\gamma(\varepsilon) >0$, such a torus exists, surrounds the periodic solution, and has opposite stability to the periodic solution.
\end{itemize}
\end{mtheorem}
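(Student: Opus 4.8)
The plan is to put system \eqref{FZ} under \eqref{abcA} into the standard form of the averaging method: Part I will be read off the first-order averaged function, while Part II will follow from a torus-bifurcation theorem of averaging theory once a Hopf bifurcation, with nonzero first Lyapunov coefficient, is verified in the associated averaged planar system. \textbf{Common setup.} A linear change of coordinates sends the Jacobian of \eqref{FZ} at the origin to its real canonical form, decoupling the central direction (zero eigenvalue) from the rotational block of $\pm i w$. After the scaling $(x,y,z)=\varepsilon(\,\cdot\,)$, which renders the nonlinear and the $\mathcal{O}(\varepsilon)$-parameter terms small, the system becomes a perturbation of a harmonic rotation coupled to a slow variable. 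Introducing cylindrical coordinates $(r,\theta,\rho)$ in which $\dot\theta = w + \mathcal{O}(\varepsilon)$ and taking $\theta$ as the new time, I obtain a $2\pi$-periodic system in standard form
\[
\frac{d}{d\theta}(r,\rho)=\varepsilon\,F_1(\theta,r,\rho)+\varepsilon^2 F_2(\theta,r,\rho)+\mathcal{O}(\varepsilon^3),
\]
whose associated averaged planar field is $\bar F(r,\rho)=\bar F_1(r,\rho)+\varepsilon\,\bar F_2(r,\rho)+\cdots$, with $\bar F_j=\frac{1}{2\pi}\int_0^{2\pi}F_j\,d\theta$ after the usual near-identity transformation for the higher orders.

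\textbf{Part I.} A simple zero $(r^\ast,\rho^\ast)$ of $\bar F_1$ with $r^\ast>0$ persists, by the averaging theorem, as a $2\pi$-periodic solution of the standard-form system, hence as a periodic orbit $\phi(t,\varepsilon)$ of \eqref{FZ} collapsing to the origin as $\varepsilon\to0$. I will show that the hypothesis $l_0(w^2d-1)>0$, $l_0\neq0$, is exactly the condition for such a positive simple zero to exist, and that the trace of $D\bar F_1(r^\ast,\rho^\ast)$ has the sign of $\beta_1 d-\gamma_1$; together with a positive determinant this settles the attracting/repelling dichotomy.

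\textbf{Part II.} Here $\phi$ is realized as a fixed point of the Poincaré map $P_\varepsilon$ on a section transverse to the orbit, and the torus is an invariant closed curve of $P_\varepsilon$ produced by a Neimark-Sacker bifurcation. I will invoke the torus-bifurcation result of the averaging framework, which transfers a Hopf bifurcation of the averaged planar system into a Neimark-Sacker bifurcation of $P_\varepsilon$, hence an invariant torus of the flow. Concretely, I check that as $\gamma_1$ crosses $\gamma(\varepsilon)=\beta_1 d+\mathcal{O}(\varepsilon)$ --- the value where $\beta_1 d-\gamma_1$ vanishes, i.e. where $\phi$ changes stability --- the equilibrium of $\bar F$ undergoes a Hopf bifurcation, its eigenvalues being complex and crossing the imaginary axis transversally. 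Since the genuineness of the torus is decided through the modulus of the multipliers of $P_\varepsilon$ precisely where the first-order contribution to that modulus vanishes, the averaging must be carried to the order demanded by the torus theorem (the second-order field $\bar F_2$). Finally I compute the first Lyapunov coefficient of this Hopf point and identify it, up to a nonzero factor, with $l_1$; its nonvanishing ($l_1\neq0$) gives non-degeneracy, and its sign together with the crossing direction places the unique torus on the side $\gamma_1-\gamma(\varepsilon)>0$, surrounding $\phi$ with the opposite stability.

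\textbf{Main obstacle.} The decisive and most laborious step is Part II: obtaining the averaged field to the required order and then extracting, in closed form, the first Lyapunov coefficient of the planar Hopf point and verifying that it equals $l_1$ up to a nonzero multiple. Transversality of the eigenvalue crossing and the exclusion of strong resonances are comparatively routine, but the normal-form computation behind $l_1$ --- with its cubic resonant terms --- is where the real difficulty lies.
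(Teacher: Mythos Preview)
Your outline is correct and follows essentially the same route as the paper: linear change to real Jordan form, the rescaling $(X,Y,Z)=\varepsilon(x_1,x_2,x_3)$, cylindrical coordinates with $\theta$ as new time, then averaging. Part~I is identical to the paper's Propositions on existence and stability (the paper uses the characteristic polynomial of $Dg_1$ at the simple zero and Routh--Hurwitz; note that the trace actually has the sign of $-(\beta_1d-\gamma_1)$, which is what gives attracting for $\beta_1d-\gamma_1>0$).

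For Part~II the paper invokes the C\^andido--Novaes torus theorem, which is formulated directly for the \emph{Neimark--Sacker} bifurcation of the Poincar\'e map $\Pi(\mathbf z,\mu,\varepsilon)=\mathbf z+\varepsilon g_1+\varepsilon^2 g_2+\cdots$ rather than for a Hopf bifurcation of the averaged planar field. In practice the two viewpoints coincide: one verifies conditions C1--C2 on the eigenvalues of $D g_1$ at $\gamma_1=\beta_1 d$ (your complex crossing and transversality), puts $D_{\mathbf y}H_\varepsilon(0,0)$ in real Jordan form, and computes the Lyapunov coefficient $l_1^\varepsilon$ via the Neimark--Sacker formula. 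A point you anticipate but should make explicit is that the first averaged function $g_1$ is only quadratic in $(r,z)$, so the leading Lyapunov contribution $l_{1,1}$ vanishes identically; the first nonzero term is $l_{1,2}$, and it is this coefficient that equals $l_1$ up to the positive factor $\pi/(128\,d^5 w^{10})$. Thus the second-order averaged function $g_2$ is needed not merely to locate the curve $\gamma(\varepsilon)$ but to feed the cubic terms into the Lyapunov computation, exactly as you say. The sign of $d_0=\pi/w>0$ together with the sign of $l_{1,2}$ then places the torus on the side $\gamma_1>\gamma(\varepsilon)$ with the stated stability.
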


\subsection{Results on Family B} 
Assume that
\begin{eqnarray}\label{abcB}
(a,b,c)=\left(-w^2+\alpha(\varepsilon),\beta(\varepsilon),\gamma(\varepsilon)\right)+\mathcal{O}(\varepsilon^4), \,\, 
\mbox{and}\,\, d\in\mathbb{R}
\end{eqnarray}
where 
\begin{eqnarray}\label{alphabetagamma}
\alpha(\varepsilon)=\sum_{i=1}^5 \varepsilon^i \alpha_i,\,\, \beta(\varepsilon)=\sum_{i=1}^5 \varepsilon^i \beta_i,\,\, \gamma(\varepsilon)=\sum_{i=1}^5 \varepsilon^i \gamma_i, \,\,
w>0,\,\,
and \,\, \alpha_i, \beta_i, \gamma_i, \varepsilon \in \mathbb{R}.
\end{eqnarray}
Also, define 
\begin{align}\label{valuesB}
k_0:=&(\gamma_1\alpha_1-w^2\gamma_2+\beta_2) (\gamma_1\alpha_1+w^2\gamma_2-\beta_2),\\\nonumber
\eta:=&\dfrac{(1-dw^2) (w^2{{ \gamma_1}}^{3}d -w^4{\gamma_1}^3  d^2
-w^2 \gamma_3+\alpha_1 \gamma_2+\alpha_2\gamma_1+\beta_3)}
{\gamma_1 ( 1-5w^2+w^4+3dw^2-3d w^4+3 d w^6 )},\\\nonumber
\lambda_1:=&\dfrac{2\gamma_1 \pi(1-dw^2)}{w},\\\nonumber
\lambda_2:=&\dfrac{2\pi \left(w^2\gamma_1^3 d-w^4\gamma_1^3 d^2-w^2\gamma_3+\gamma_1\alpha2+\alpha_1\gamma_2+\beta_3\right)}{w^3}.
\end{align}
 
We divide this family into two subclasses namely \textbf{Class B1} and \textbf{Class B2}, see Theorem \ref{teoB}. 

Under the conditions specified in \textbf{Class B1}, the existence of a periodic solution was previously established in \cite{RodrigoLlibreb}. This was achieved by showing that the first-order averaged function vanishes identically, while the second-order function has a non-trivial zero, $(r_{\star},z_{\star})$, corresponding to a periodic solution. In this work, we re-prove the existence of this periodic solution for completeness and, in addition, we provide a detailed analysis of its stability. Furthermore, we also demonstrate the existence of a torus bifurcating from this periodic solution.

Now, under the conditions of \textbf{Class B2}, to the best of our knowledge, the bifurcation of periodic solutions has not been explored. This class is characterized by a continuum of zeros for the first-order averaging function, which remains non-vanishing. To investigate the bifurcation of periodic solutions in this context, we employ recent advancements in averaging theory based on the Lyapunov-Schmidt reduction.

\begin{mtheorem}\label{teoB}
Let $(a,b,c)$ be defined by \eqref{abcB} and assume that $\beta_1=\gamma_1 w^2.$ we deal with two classes:
\begin{itemize}
\item[\textbf{Class B1.}]
Assume that $d=\dfrac{1}{w^2}$ and $\beta_2\neq w^2 \gamma_2 -\alpha_1 \gamma_1.$ 
Therefore, the following statements hold:
\begin{itemize}
\item[Q1.]
For $k_0>0$ and $|\varepsilon| \neq 0$ small enough, the FitzHugh-Nagumo system \eqref{FZ} possesses a periodic solution $\phi(t,\varepsilon)$ such that $\phi(t,\varepsilon)$ converges to $(0,0,0)$ as 
$\varepsilon$ converges to zero. Furthermore, a periodic solution of this nature is attracting (resp. repelling) if $\beta_2<\gamma_2 w^2$ (resp. $\beta_2>\gamma_2 w^2$).
\item[Q2.]
If $(w^2-1) \gamma_1 \neq 0$, there is a smooth curve $\beta(\varepsilon)=\gamma_2 w^2+\mathcal{O}(\varepsilon)$ and interval $J_{\varepsilon}$  containing $\beta(\varepsilon)$ while a unique invariant torus
emerges
from the periodic solution $\phi(t,\beta(\varepsilon),\varepsilon)$ when $\beta_2$ passes over $\beta(\varepsilon).$
This torus exists while surrounds the periodic solution when $\beta_2 \in J_{\varepsilon}$ and $\beta_2-\beta(\varepsilon) <0.$ Note that $\phi(t,\beta_2,\varepsilon)=\phi(t,\varepsilon).$ 
Additionally, the torus is attracting (resp. repelling) when encircling the unstable (resp. stable) periodic orbit if $l_{1,3}<0$ (resp. $l_{1,3}>0$.)

\end{itemize}
\item[\textbf{Class B2.}]
Assume that $\beta_2= w^2 \gamma_2 -\alpha_1 \gamma_1$ and $d\neq \dfrac{1}{w^2}.$ So, the following statements hold:
\begin{itemize}
\item[Q1.]
For $|\varepsilon| \neq 0$ enough small, system \eqref{FZ} possesses a periodic solution $\phi(t,\varepsilon)$ such that $\phi(t,\varepsilon)\rightarrow (0,0,0)$ as $\varepsilon \rightarrow 0$.
\item[Q2.]
If $\lambda_{1,2}>0$ (resp. $\lambda_{1,2}<0$) then the generated periodic solution is unstable (resp. asymptotically stable) while it is unstable when $\lambda_1\lambda_2<0.$
\end{itemize}
\end{itemize}
\end{mtheorem}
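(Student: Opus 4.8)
The plan is to reduce system \eqref{FZ} to the standard form of the averaging method and to read off the periodic solutions, their stability, and the torus bifurcation from the associated \emph{guiding} (averaged) planar system. First I would bring the Jacobian at the origin into real Jordan form, so that the linear part acts as a planar rotation of angular speed $w$ coupled to the neutral direction of the zero eigenvalue. Substituting the expansions \eqref{abcB}--\eqref{alphabetagamma}, rescaling the phase variables by an appropriate power of $\varepsilon$, and passing to cylindrical coordinates $(r,\theta,z)$ with $\theta$ playing the role of the new time, I would obtain a $2\pi$-periodic nonautonomous system $\frac{d}{d\theta}(r,z)=\sum_{k\geq1}\varepsilon^{k}F_{k}(\theta,r,z)$. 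The imposed relation $\beta_{1}=\gamma_{1}w^{2}$ is exactly the resonance that makes the first-order averaged function vanish identically, so the leading bifurcation function must be sought at higher order, and the two class conditions select which order that is.

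For \textbf{Class B1} ($d=1/w^{2}$) I would compute the second-order averaged function $f_{2}(r,z)$ and locate its zero $(r_{\star},z_{\star})$ with $r_{\star}>0$; the inequality $k_{0}>0$ from \eqref{valuesB} is precisely the nondegeneracy condition (positivity of the radius together with nonvanishing Jacobian determinant) guaranteeing both the existence and the simplicity of this zero, yielding the periodic solution $\phi(t,\varepsilon)\to(0,0,0)$ of Q1. Its stability is decided by the trace of $Df_{2}(r_{\star},z_{\star})$, whose sign turns out to be that of $\beta_{2}-\gamma_{2}w^{2}$, producing the attracting/repelling dichotomy. For the torus in Q2 I would observe that the equilibrium $(r_{\star},z_{\star})$ of the guiding planar system undergoes a Hopf bifurcation when its trace vanishes, i.e. exactly along the curve $\beta(\varepsilon)=\gamma_{2}w^{2}+\mathcal{O}(\varepsilon)$; the transversality hypothesis $(w^{2}-1)\gamma_{1}\neq0$ ensures the eigenvalues cross the imaginary axis with nonzero speed, equivalently that the complex multipliers of the Poincar\'e map cross the unit circle (a Neimark--Sacker bifurcation), and the limit cycle born in the guiding system lifts to the invariant torus of \eqref{FZ}. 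Finally, the sign of the first Lyapunov coefficient $l_{1,3}$ fixes the criticality of this Hopf/Neimark--Sacker bifurcation and hence the stability of the bifurcated torus relative to $\phi$.

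For \textbf{Class B2} ($\beta_{2}=w^{2}\gamma_{2}-\alpha_{1}\gamma_{1}$, $d\neq1/w^{2}$) the obstruction is that the lowest-order averaged function no longer has isolated zeros but vanishes along a whole curve, so the classical averaging theorem does not apply. Here I would invoke the Lyapunov--Schmidt based averaging: project onto and restrict to the continuum of zeros and compute the reduced bifurcation function along it, carrying the expansion up to the order at which the third-index coefficients $\gamma_{3},\alpha_{2},\beta_{3}$ enter---precisely the order encoded by the constants $\eta,\lambda_{1},\lambda_{2}$ of \eqref{valuesB}. A simple zero of this reduced function produces the periodic solution of Q1, and the eigenvalues of the reduced Jacobian, which coincide up to positive scalars with $\lambda_{1}$ and $\lambda_{2}$, give the stability statement Q2: both positive forces an unstable node, both negative an asymptotically stable node, and $\lambda_{1}\lambda_{2}<0$ a saddle, hence again instability.

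The main obstacle I anticipate is computational and organizational rather than conceptual: assembling the second- and higher-order averaged functions (the integrals defining $F_{2}$ and the reduced map are lengthy) so that the resonance $\beta_{1}=\gamma_{1}w^{2}$ and each class condition are invoked at exactly the right stage, and---for the torus---computing the first Lyapunov coefficient $l_{1,3}$ of the induced Hopf/Neimark--Sacker bifurcation explicitly enough to determine its sign, which is the step that genuinely controls the stability of the invariant torus.
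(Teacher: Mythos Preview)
Your proposal is correct and follows essentially the same approach as the paper: standard-form reduction via Jordan form, $\varepsilon$-rescaling, and cylindrical coordinates, then second-order averaging for Class~B1 (stability from the trace of $Dg_2$, torus via the Neimark--Sacker machinery applied to the Poincar\'e map), and Lyapunov--Schmidt based higher-order averaging for Class~B2 with stability read from a $k$-determined diagonalization of the Poincar\'e map Jacobian. Two minor imprecisions worth fixing as you write it out: first, the relation $\beta_1=\gamma_1 w^2$ alone only kills the first component of $g_1$---it is the additional condition $d=1/w^2$ in B1 that makes $g_1\equiv 0$, whereas in B2 the function $g_1$ survives with the continuum of zeros $\{z=0\}$ (as you do say later); second, the hypothesis $(w^2-1)\gamma_1\neq 0$ is not the transversality condition (in the paper $d_0=-\pi$ independently of it) but is what guarantees the nontrivial zero $(r_\star,z_\star)$ of $g_2$ exists, since this factor sits in its denominator.
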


\subsection{Structure of the Paper}
Section 2 is divided into four subsections. In the first part, subsection 2.1, we review the averaging theory and present the key formulae needed throughout the other sections. In the second part, subsection 2.2, we discuss the existence of periodic solutions via Lyapunov-Schmidt reduction, highlighting that this method is an efficient alternative for detecting periodic solutions when the zero of the averaging function is not isolated. Subsection 2.3 recalls theorems related to averaging theory and $k$-determined hyperbolic matrices in the context of stability. Finally, subsection 2.4 examines the general requirements for averaging functions to ensure the occurrence of a torus bifurcation. Section 3 is separated into three subsections, investigating the existence of a periodic solution, the stability analysis of that periodic solution, and the generic conditions for the existence and stability of a torus bifurcation for Family A, respectively. Section 4 also contains three subsections. The first two subsections discuss the existence and stability of periodic solutions, while the last part studies the existence of a torus bifurcation. In section 5, We employ the higher-order backward differentiation formula to solve stiff ordinary differential equations, providing numerical examples for each result discussed in Theorems A and B.
In conclusion, Section 6 provides an overview of the main findings.

\section{Preliminary Theorems and Formulae}
\subsection{Averaging Theory}
The averaging theory is among the most useful
and efficient methods for identifying periodic solutions.
In simple terms, such a method establishes a link between periodic solutions of a non-autonomous periodic differential equations and zeros of the averaged functions.

Consider the following standard form of a non-autonomous differential system as
\begin{eqnarray}\label{Standard}
\dot{\mathbf{x}}=\sum_{i=1}^k \mathbf{G}_i(t,\mathbf{x})  \varepsilon^i+ \tilde{\mathbf{G}}(t,\mathbf{x} , \varepsilon) \varepsilon^{k+1}, \,\, (t,\mathbf{x} , \varepsilon)\in \mathbb{R}\times \Omega \times (-\bar{\varepsilon}, \bar{\varepsilon})
\end{eqnarray}
in which $\Omega \subseteq\mathbb{R}^n$ is an open bounded set and $\bar{\varepsilon}>0$ small. Furthermore, functions $\mathbf{G}_i$ and $\tilde{\mathbf{G}}$ in the variable $t$ are $T$- periodic.   Since this system is periodic, we can describe it in the extended phase space $\mathbb{R}/T\mathbb{Z} \times \Omega.$

The averaging method introduces a sequence of  averaged functions of order $i$, $1\leq i\leq k$, which are obtained via
\[
g_i (\mathbf{z})=\dfrac{y_i (T,\mathbf{z})}{i!},
\]
where
\begin{eqnarray*}
y_1(t,\mathbf{z})&=&\int _0^t \mathbf{G}_1 (\tau,\mathbf{z}) d\tau, \\\nonumber
y_2(t,\mathbf{z})&=&2 \int _0^t \left(\mathbf{G}_2 (\tau,\mathbf{z})+\dfrac{\partial\mathbf{G}_1}{\partial\mathbf{x}}(\tau,\mathbf{z})y_1(\tau,\mathbf{z})\right) d\tau, \\\nonumber
y_3(t,\mathbf{z})&=&6 \int _0^t \left(\mathbf{G}_3 (\tau,\mathbf{z})+\dfrac{\partial\mathbf{G}_2}{\partial\mathbf{x}}(\tau,\mathbf{z})y_1(\tau,\mathbf{z})+\dfrac{1}{2}\dfrac{\partial^2\mathbf{G}_1}{\partial\mathbf{x}^2}(\tau,\mathbf{z})y_1(\tau,\mathbf{z})^2+\dfrac{1}{2}\dfrac{\partial\mathbf{G}_1}{\partial\mathbf{x}}(\tau,\mathbf{z})y_2(\tau,\mathbf{z})\right) d\tau,\\\nonumber
y_4(t,\mathbf{z})&=&\int_0^t \bigg(
24\mathbf{G}_4 (\tau,\mathbf{z})+24\dfrac{\partial\mathbf{G}_3}{\partial\mathbf{x}}(\tau,\mathbf{z})y_1(\tau,\mathbf{z})+12 \dfrac{\partial^2\mathbf{G}_2}{\partial\mathbf{x}^2}(\tau,\mathbf{z})y_1(\tau,\mathbf{z})^2
+12\dfrac{\partial\mathbf{G}_2}{\partial\mathbf{x}}(\tau,\mathbf{z})y_2(\tau,\mathbf{z})\\\nonumber
&&+12\dfrac{\partial^2\mathbf{G}_1}{\partial\mathbf{x}^2}(\tau,\mathbf{z})y_1(\tau,\mathbf{z})\odot y_2(\tau,\mathbf{z})+4\dfrac{\partial^3\mathbf{G}_1}{\partial\mathbf{x}^3}(\tau,\mathbf{z})y_1(\tau,\mathbf{z})^3
+4\dfrac{\partial\mathbf{G}_1}{\partial\mathbf{x}}(\tau,\mathbf{z})y_3(\tau,\mathbf{z})^3
\bigg) d\tau,\\\nonumber
y_5(t,\mathbf{z})&=&\int_0^t \bigg(
120\mathbf{G}_5 (\tau,\mathbf{z})+120\dfrac{\partial\mathbf{G}_4}{\partial\mathbf{x}}(\tau,\mathbf{z})y_1(\tau,\mathbf{z})+60 \dfrac{\partial^2\mathbf{G}_2}{\partial\mathbf{x}^2}(\tau,\mathbf{z})y_1(\tau,\mathbf{z})^2
+60\dfrac{\partial\mathbf{G}_3}{\partial\mathbf{x}}(\tau,\mathbf{z})y_2(\tau,\mathbf{z})\\\nonumber
&&+60\dfrac{\partial^2\mathbf{G}_2}{\partial\mathbf{x}^2}(\tau,\mathbf{z})y_1(\tau,\mathbf{z})\odot y_2(\tau,\mathbf{z})+20\dfrac{\partial^3\mathbf{G}_2}{\partial\mathbf{x}^3}(\tau,\mathbf{z})y_1(\tau,\mathbf{z})^3
+20\dfrac{\partial\mathbf{G}_2}{\partial\mathbf{x}}(\tau,\mathbf{z})y_3(\tau,\mathbf{z})\\\nonumber
&&+20 \dfrac{\partial^2\mathbf{G}_1}{\partial\mathbf{x}^2}(\tau,\mathbf{z})y_1(\tau,\mathbf{z})\odot y_3(\tau,\mathbf{z})+15\dfrac{\partial^2\mathbf{G}_1}{\partial\mathbf{x}^2}(\tau,\mathbf{z})y_2(\tau,\mathbf{z})^2
+5\dfrac{\partial^4\mathbf{G}_1}{\partial\mathbf{x}^4}(\tau,\mathbf{z})y_1(\tau,\mathbf{z})^4\\\nonumber
&&+30\dfrac{\partial^3\mathbf{G}_1}{\partial\mathbf{x}^3}(\tau,\mathbf{z})y_1(\tau,\mathbf{z})^2\odot y_2(\tau,\mathbf{z})+5\dfrac{\partial\mathbf{G}_1}{\partial\mathbf{x}}(\tau,\mathbf{z})y_4(\tau,\mathbf{z})\bigg) d\tau.
\end{eqnarray*}
Note that in above notation, $\bigodot^l _{j=1} y_j$ denotes $l-$ Frechet derivative of
$\mathbf{G}_i(t,\mathbf{x})$ and it is expressed as
\[
\dfrac{\partial^l \mathbf{G}_i}{\partial \mathbf{x}^l} (t, \mathbf{x}) \bigodot^l _{j=1} y_j=
 \left( \sum^{n}_{\substack{i_r=1\\r=1..l}} \dfrac{\partial^l \mathbf{G}_i^1}
 {\partial \mathbf{x}_{i_1}\cdots \partial \mathbf{x}_{i_l}} (t, \mathbf{x}) y_1^{i_1}\cdots y_l^{i_l}, \cdots, \sum^{n}_{\substack{i_r=1\\r=1...l}} \dfrac{\partial^l \mathbf{G}_i^n}{\partial \mathbf{x}_{i_1}\cdots \partial \mathbf{x}_{i_l}} (t, \mathbf{x}) y_1^{i_1}\cdots y_l^{i_l} \right).
\]
\begin{lemma}\label{lemma}
\cite{NovaesBrouwer}: Let $x(0,\mathbf{x},\varepsilon): [0, t_z]\rightarrow \mathbb{R}^n$ is the solution for \eqref{Standard}. Then, for $|\varepsilon|$ sufficiently small and $T<t_{z},$ we have
\[
x(t,\mathbf{x},\varepsilon) =\mathbf{x}+\sum_{i=1}^k 
\dfrac{y_i (t, \mathbf{x})}{i!} \varepsilon^i+\varepsilon^{k+1}\mathcal{O}(1).
\]
\end{lemma}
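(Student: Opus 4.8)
The plan is to derive this expansion by combining smooth dependence of the flow on the parameter $\varepsilon$ with an induction identifying the Taylor coefficients in $\varepsilon$ with the functions $y_i$, and to control the remainder through a uniform estimate. First I would rewrite \eqref{Standard} in integral form,
\[
x(t,\mathbf{x},\varepsilon) =\mathbf{x}+\int_0^t\left(\sum_{i=1}^k \mathbf{G}_i(\tau,x(\tau,\mathbf{x},\varepsilon))\varepsilon^i+\tilde{\mathbf{G}}(\tau,x(\tau,\mathbf{x},\varepsilon),\varepsilon)\varepsilon^{k+1}\right)d\tau.
\]
Since the right-hand side of \eqref{Standard} vanishes at $\varepsilon=0$, the solution satisfies $x(t,\mathbf{x},0)=\mathbf{x}$, so the expansion must begin at order $\mathbf{x}$. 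Because $\mathbf{G}_i$ and $\tilde{\mathbf{G}}$ are assumed sufficiently smooth, the standard theorem on smooth dependence of solutions on parameters guarantees that $\varepsilon\mapsto x(t,\mathbf{x},\varepsilon)$ is of class $C^{k+1}$, uniformly for $t$ in the compact interval $[0,T]$ and $\mathbf{x}$ in compact subsets of $\Omega$. Taylor's theorem in $\varepsilon$ at $\varepsilon=0$ then yields
\[
x(t,\mathbf{x},\varepsilon) =\mathbf{x}+\sum_{i=1}^k \frac{1}{i!}\frac{\partial^i x}{\partial \varepsilon^i}(t,\mathbf{x},0)\,\varepsilon^i+\varepsilon^{k+1}\mathcal{O}(1),
\]
where the remainder is genuinely $\mathcal{O}(1)$ precisely because of the uniform $C^{k+1}$ bound on the compact time interval.

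The core of the argument would be to prove, by induction on $i$, that $\partial_\varepsilon^i x(t,\mathbf{x},0)=y_i(t,\mathbf{x})$. I would substitute the truncated series into the integral equation above, expand each $\mathbf{G}_i(\tau,x(\tau,\mathbf{x},\varepsilon))$ in a Taylor series about $x=\mathbf{x}$, and collect equal powers of $\varepsilon$. At order $\varepsilon^j$ this produces an explicit formula for $\partial_\varepsilon^j x(t,\mathbf{x},0)$ as an integral involving the derivatives $\partial^l\mathbf{G}_i/\partial\mathbf{x}^l$ evaluated along the lower-order coefficients; matching these against the recursive definitions of $y_1,\dots,y_5$ in the statement verifies the claim. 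The bookkeeping is organized by the Fa\`a di Bruno formula, and the Fr\'echet-derivative products $\bigodot_{j=1}^l y_j$ appearing in the definition of the $y_i$ are exactly the terms generated when differentiating the composition $\tau\mapsto\mathbf{G}_i(\tau,x(\tau,\mathbf{x},\varepsilon))$ repeatedly in $\varepsilon$.

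The main obstacle I anticipate is purely combinatorial: confirming that the multinomial coefficients produced by repeated differentiation of the composite vector field match the numerical factors ($2,6,24,120,\dots$, together with the intermediate constants) in the definitions of $y_2,\dots,y_5$. An alternative route that sidesteps the explicit Taylor-coefficient computation is to define the candidate expansion $\bar{x}:=\mathbf{x}+\sum_{i=1}^k y_i(t,\mathbf{x})\varepsilon^i/i!$ directly from the recursion, substitute it into \eqref{Standard}, and check that it solves the equation up to an $\mathcal{O}(\varepsilon^{k+1})$ residual; a Gronwall inequality applied to $x-\bar{x}$ then forces $\|x-\bar{x}\|=\varepsilon^{k+1}\mathcal{O}(1)$ on $[0,T]$. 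This trades the delicate identification of derivatives for a residual estimate, but in either case it is the smoothness of $\mathbf{G}_i,\tilde{\mathbf{G}}$ and the compactness of $[0,T]$ that make the remainder uniform.
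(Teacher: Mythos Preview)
The paper does not prove this lemma at all: it is quoted verbatim from \cite{NovaesBrouwer} and used as a black box, with no argument supplied in the present work. So there is no ``paper's own proof'' to compare against.

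Your sketch is nonetheless a correct outline of the standard proof of this result (and is essentially how it is established in the cited reference). Writing the solution in integral form, invoking smooth dependence on parameters to justify a $C^{k+1}$ Taylor expansion in $\varepsilon$ about $\varepsilon=0$, and then identifying the successive $\varepsilon$-derivatives of the flow with the $y_i$ via Fa\`a~di~Bruno is exactly the right mechanism; the alternative residual-plus-Gronwall route you mention is equally valid and is sometimes preferred because it avoids the combinatorial bookkeeping. The only point to be careful about is that the uniform $\mathcal{O}(1)$ remainder on $[0,T]$ requires the vector fields $\mathbf{G}_i$, $\tilde{\mathbf{G}}$ to be at least $C^{k+1}$ (or $C^k$ with a Lipschitz $k$-th derivative), which the paper tacitly assumes but does not spell out.
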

Here $l\in [1,k],$ represents the subindex of the first averaged function which does not vanish. From Lemma \ref{lemma}, 
the Poincar\'e map on the transversal section $\sum=\{0\}\times \Omega$ for the differential equation \eqref{Standard} is given by
\begin{eqnarray}\label{poinc0}
\mathbf{x}\mapsto P(\mathbf{x},\varepsilon)=\mathbf{x}+g_l(\mathbf{x})+\varepsilon^l g_{l+1}(\mathbf{x})+\cdots+\varepsilon^{k-l} g_{k}(\mathbf{x})+\mathcal{O}(\varepsilon^{k+1-l}).
\end{eqnarray}
Two key observations are: periodic solutions correspond to fixed points of the Poincar\'e map \eqref{poinc0}; while invariant tori correspond to invariant closed curves of a Poincar\'e map \eqref{poinc0}.

The following classical result follows directly from \eqref{poinc0}. 
\begin{theorem}\label{thm1}
\cite{NovaesBrouwer}. Take into account the standard form \eqref{Standard} and assume $1\leq m \leq k$ be the first index such that $g_m \neq 0$ and $\mathbf{z}^{\ast}\in \Omega$ be the solution of $g_m$, i.e.  $g_m(\mathbf{z}^{\ast})=0.$ Further for $|\varepsilon| \neq 0$ enough small, let $\left|Dg_{m}(\mathbf{z}^{\ast})\neq 0 \right|.$ Then, there is 
a single T-periodic solution $\phi(t,\varepsilon)$ where  $\phi(0,0)=\mathbf{z}^{\ast}.$
\end{theorem}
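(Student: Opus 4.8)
The plan is to reduce the existence of a $T$-periodic solution to locating a fixed point of the Poincar\'e map \eqref{poinc0} and then to invoke the implicit function theorem. First I would use Lemma \ref{lemma} to write the time-$T$ flow of the $T$-periodic system \eqref{Standard} on the transversal section $\{0\}\times\Omega$ as $P(\mathbf{x},\varepsilon)=\mathbf{x}+\sum_{i=1}^k \varepsilon^i g_i(\mathbf{x})+\varepsilon^{k+1}\mathcal{O}(1)$. Since $m$ is by hypothesis the first index with $g_m\neq 0$, the lower-order averaged functions vanish identically, so the displacement function $\Delta(\mathbf{x},\varepsilon):=P(\mathbf{x},\varepsilon)-\mathbf{x}$ takes the form $\Delta(\mathbf{x},\varepsilon)=\varepsilon^m g_m(\mathbf{x})+\varepsilon^{m+1}g_{m+1}(\mathbf{x})+\cdots+\varepsilon^{k+1}\mathcal{O}(1)$. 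A point $\mathbf{x}$ is the initial condition of a $T$-periodic solution precisely when $\Delta(\mathbf{x},\varepsilon)=0$.

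The key step is to strip off the degenerate prefactor $\varepsilon^m$. For $\varepsilon\neq 0$ the equation $\Delta(\mathbf{x},\varepsilon)=0$ is equivalent to $f(\mathbf{x},\varepsilon):=g_m(\mathbf{x})+\varepsilon g_{m+1}(\mathbf{x})+\cdots+\varepsilon^{k+1-m}\mathcal{O}(1)=0$. The crucial observation is that $f$ extends with enough regularity, inherited from the smoothness of the $\mathbf{G}_i$ and $\tilde{\mathbf{G}}$, to $\varepsilon=0$, where $f(\mathbf{x},0)=g_m(\mathbf{x})$. Evaluating at the prescribed zero gives $f(\mathbf{z}^*,0)=g_m(\mathbf{z}^*)=0$, and differentiating in the spatial variable yields $D_{\mathbf{x}}f(\mathbf{z}^*,0)=Dg_m(\mathbf{z}^*)$, whose determinant is nonzero by assumption; hence $D_{\mathbf{x}}f(\mathbf{z}^*,0)$ is invertible.

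With these two facts I would apply the implicit function theorem to $f$ at $(\mathbf{z}^*,0)$. This produces a neighborhood of $\varepsilon=0$ and a unique smooth branch $\mathbf{x}=\mathbf{x}(\varepsilon)\in\Omega$ with $\mathbf{x}(0)=\mathbf{z}^*$ and $f(\mathbf{x}(\varepsilon),\varepsilon)=0$. For each small $\varepsilon\neq 0$ this is a fixed point of $P(\cdot,\varepsilon)$, hence the initial condition of a $T$-periodic solution $\phi(t,\varepsilon)$ of \eqref{Standard}; letting $\varepsilon\to 0$ gives $\phi(0,0)=\mathbf{z}^*$. The local uniqueness of the branch near $\mathbf{z}^*$ follows directly from the uniqueness clause of the implicit function theorem.

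The main obstacle I anticipate is the regularity needed to justify dividing by $\varepsilon^m$ while retaining the hypotheses of the implicit function theorem: one must verify that after removing the common factor the reduced map $f$ is genuinely $C^1$ up to and including $\varepsilon=0$, so that both the remainder $\varepsilon^{k+1-m}\mathcal{O}(1)$ and its spatial derivative stay controlled there. This is exactly where the asymptotic estimate of Lemma \ref{lemma}, uniform in $\mathbf{x}$ on compact subsets of $\Omega$, does the work, and it is the only delicate point in what is otherwise a direct implicit-function-theorem argument.
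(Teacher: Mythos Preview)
The paper does not supply its own proof of this theorem: it is quoted as a preliminary result from \cite{NovaesBrouwer}, with only the Poincar\'e map expansion \eqref{poinc0} given as context. Your argument---rewriting the displacement map, factoring out $\varepsilon^m$, and applying the implicit function theorem at $(\mathbf{z}^*,0)$---is correct and is precisely the standard route used in the cited source; you have also correctly isolated the only genuine technical point, namely the $C^1$ regularity of the reduced map $f$ through $\varepsilon=0$.
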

Theorem \ref{thm1} provides that the simple zeros of averaged functions are connected to isolated periodic solutions of the differential system.

\subsection{Periodic solutions  in view of Lyapunov-Schmidt reduction}\label{secLya}

In \cite{Lyapunov}, taking advantage of Lyapunov-Schmidt reduction techinique, the authors outline conditions sufficient for the existence of periodic solutions arising from non-isolated zero sets (see also, \cite{buica07,LlibreNovaes}). In what follows, we briefly point out the main results, without mentioning the details.

Suppose that function $g_1$
is zero on
\[
\mathcal{Z}=\{ \mathbf{z}_u=(u, \mathcal{B}(u)): u \in \overline{V}\} \subset U,
\]
where open bounded set $V\subseteq\mathbb{R}^m$ and $\mathcal{C}^k$-function $\mathcal{B}:\overline{V}\rightarrow \mathbb{R}^{n-m}.$ Denote 
\[
Dg_1(\mathbf{z}_u)=\begin{pmatrix}
\Lambda_u & \Gamma_u \\ 
B_u & \Delta_u
\end{pmatrix},
\]
and let $f_i$ refer to bifurcation function of order $i$ are defined as
\begin{eqnarray*}
\gamma_1(u)&=&-\Delta_u^{-1} \pi^{\perp} g_2(\mathbf{z}_u),\\\nonumber
f_1(u)&=&\Gamma_u \gamma_1(u)+\pi g_2(\mathbf{z}_u),\\\nonumber
\gamma_2(u)&=&-\Delta_u^{-1} \left(\frac{\partial^2 \pi^{\perp} g_1}{\partial b^2}(\mathbf{z}_u) \gamma_1(u)^2+2
\frac{\partial\pi^{\perp} g_2}{\partial b}(\mathbf{z}_u) \gamma_1(u)+2\pi^{\perp}g_3(u)\right),\\\nonumber
f_2(u)&=&\frac{1}{2}\Gamma_u \gamma_2(u)+\frac{\partial^2\pi g_1}{2\partial b^2}(\mathbf{z}_u)\gamma_1(u)^2
+\frac{\partial \pi g_2}{\partial b}(\mathbf{z}_u)\gamma_1(u)+\pi g_3(\mathbf{z}_u),\\\nonumber
\gamma_3(u)&=&-\Delta_u^{-1} \bigg(
\frac{\partial^3 \pi^{\perp} g_1}{\partial b^3}(\mathbf{z}_u)\gamma_1(u)^3
+3\frac{\partial^2 \pi^{\perp} g_1}{\partial b^2}(\mathbf{z}_u)\gamma_1(u)\odot \gamma_2(u)
+3\frac{\partial^2 \pi^{\perp} g_2}{\partial b^2}(\mathbf{z}_u)\gamma_1(u)^2\\\nonumber
&&+2\frac{\partial \pi^{\perp} g_2}{\partial b}(\mathbf{z}_u)\gamma_2(u)
+6\frac{\partial \pi^{\perp} g_3}{\partial b}(\mathbf{z}_u)\gamma_1(u)+6\pi^{\perp} g_4(u)\bigg),\\\nonumber
f_3(u)&=&\frac{1}{6}\Gamma_u \gamma_3(u)+\frac{\partial^3\pi g_1}{6\partial b^3}(\mathbf{z}_u)\gamma_1(u)^3
+\frac{\partial^2 \pi g_1}{2\partial b^2}(\mathbf{z}_u)\gamma_1(u)\odot\gamma_2(u)+\frac{\partial^2\pi g_2}{2\partial b^2}(\mathbf{z}_u)\gamma_1(u)^2\\\nonumber
&&+\frac{\partial \pi g_2}{2\partial b}(\mathbf{z}_u)\gamma_2(u)+\frac{\partial \pi g_3}{\partial b}(\mathbf{z}_u)\gamma_1(u)
+\pi g_4(\mathbf{z}_u),
\end{eqnarray*}
\begin{eqnarray*}
\gamma_4(u)&=&-\Delta_u^{-1} \bigg(
\frac{\partial^4 \pi^{\perp} g_1}{\partial b^4}(\mathbf{z}_u)\gamma_1(u)^4
+3\frac{\partial^2 \pi^{\perp} g_1}{\partial b^2}(\mathbf{z}_u)\gamma_2(u)^2
+4\frac{\partial^2 \pi^{\perp} g_1}{\partial b^2}(\mathbf{z}_u)\gamma_1(u)\odot\gamma_3(u)\\\nonumber
&&+6\frac{\partial^3 \pi^{\perp} g_1}{\partial b^3}(\mathbf{z}_u)\gamma_1(u)^2\odot\gamma_2(u)
+4\frac{\partial \pi^{\perp} g_2}{\partial b}(\mathbf{z}_u)\gamma_3(u)
+4\frac{\partial^3 \pi^{\perp} g_1}{\partial b^3}(\mathbf{z}_u)\gamma_1(u)^3\\\nonumber
&&+12\frac{\partial \pi^{\perp} g_3}{\partial b}(\mathbf{z}_u)\gamma_2(u)
+12\frac{\partial^2 \pi^{\perp} g_2}{\partial b^2}(\mathbf{z}_u)\gamma_1(u)\odot\gamma_2(u)
+12\frac{\partial^2 \pi^{\perp} g_3}{\partial b^2}(\mathbf{z}_u)\gamma_1(u)^2\\\nonumber
&&+24\frac{\partial\pi^{\perp}g_4}{\partial b}(\mathbf{z}_u)\gamma_1(u)\bigg),\\\nonumber
f_4(u)&=&\frac{1}{24}\Gamma_u \gamma_4(u)+\frac{\partial^4\pi g_1}{24\partial b^4}(\mathbf{z}_u)\gamma_1(u)^4
+\frac{\partial^3 \pi g_1}{4\partial b^3}(\mathbf{z}_u)\gamma_1(u)^2\odot\gamma_2(u)
+\frac{\partial^2\pi g_1}{8\partial b^2}(\mathbf{z}_u)\gamma_2(u)^2\\\nonumber
&&+\frac{\partial^2 \pi g_1}{6\partial b^2}(\mathbf{z}_u)\gamma_1(u)\odot\gamma_3(u)
+\frac{\partial^3 \pi g_2}{6\partial b^3}(\mathbf{z}_u)\gamma_1(u)^3
+\frac{\partial^2 \pi g_2}{2\partial b^2}(\mathbf{z}_u)\gamma_1(u)\odot\gamma_2(u)\\\nonumber
&&+\frac{\partial \pi g_2}{6\partial b}(\mathbf{z}_u)\gamma_3(u)
+\frac{\partial^2 \pi g_3}{2\partial b^2}(\mathbf{z}_u)\gamma_1(u)^2
+\frac{\partial \pi g_3}{2\partial b}(\mathbf{z}_u)\gamma_2(u)
+\frac{\partial \pi g_4}{\partial b}(\mathbf{z}_u)\gamma_1(u)
+\pi g_5(\mathbf{z}_u).
\end{eqnarray*}

\begin{theorem}
\cite{Lyapunov}. Assume that for all $u \in \overline{V}$ we have $det(\Delta_u)\neq 0$ and $1\leq m \leq k-1$ is the first subindex that $f_m \neq 0$. 
When $u^{\ast}\in V$ and $\left(f_m(u^{\ast})=0, |Df_m(u^{\ast})|\neq 0\right)$ hold, 
system \eqref{Standard} possesses a single $T$-periodic solution for $|\varepsilon|\neq 0$ enough small denoted by $\phi(t,\varepsilon),$ where  $\phi(0,0)=\mathbf{z}_{u^{\ast}}$.
\end{theorem}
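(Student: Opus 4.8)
The statement is the Lyapunov--Schmidt analogue of Theorem \ref{thm1}, tailored to the degenerate situation in which the zero set of $g_1$ is the whole manifold $\mathcal{Z}$ rather than a collection of isolated points. In that case $Dg_1$ is necessarily singular along $\mathcal{Z}$ and Theorem \ref{thm1} cannot be invoked directly. The plan is to reduce the fixed-point problem for the Poincar\'e map \eqref{poinc0} to a lower-dimensional bifurcation equation in the variable $u$, and then apply the implicit function theorem to the first nonvanishing reduced function $f_m$.

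First I would set up the displacement map. By Lemma \ref{lemma} and \eqref{poinc0}, the $T$-periodic solutions of \eqref{Standard} correspond to the zeros of
\[
\mathcal{F}(\mathbf{x},\varepsilon):=P(\mathbf{x},\varepsilon)-\mathbf{x}=\varepsilon\,g_1(\mathbf{x})+\varepsilon^2 g_2(\mathbf{x})+\cdots+\varepsilon^k g_k(\mathbf{x})+\varepsilon^{k+1}\mathcal{O}(1).
\]
Since $\mathcal{F}(\cdot,0)\equiv 0$, I divide by $\varepsilon$ and work with $\widetilde{\mathcal{F}}:=\mathcal{F}/\varepsilon=g_1+\varepsilon g_2+\cdots$, whose value at $\varepsilon=0$ is $g_1$. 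Writing $\mathbf{x}=(u,b)\in\mathbb{R}^m\times\mathbb{R}^{n-m}$ and splitting by the projections $\pi,\pi^\perp$ onto the two factors, the equation $\widetilde{\mathcal{F}}=0$ separates into the auxiliary equation $\pi^\perp\widetilde{\mathcal{F}}(u,b,\varepsilon)=0$ and the bifurcation equation $\pi\widetilde{\mathcal{F}}(u,b,\varepsilon)=0$.

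Next comes the reduction. Along $\mathcal{Z}$ one has $g_1(\mathbf{z}_u)=0$, so $\pi^\perp\widetilde{\mathcal{F}}(u,\mathcal{B}(u),0)=0$, while $\partial_b\bigl(\pi^\perp\widetilde{\mathcal{F}}\bigr)(u,\mathcal{B}(u),0)=\Delta_u$ is invertible by hypothesis. The implicit function theorem then yields a unique solution $b=b(u,\varepsilon)$ of the auxiliary equation with $b(u,0)=\mathcal{B}(u)$; expanding $b(u,\varepsilon)$ in powers of $\varepsilon$ and matching orders recovers precisely the correction functions $\gamma_i$ defined above. Substituting this branch into the bifurcation equation produces the reduced function $\widehat{f}(u,\varepsilon):=\pi\widetilde{\mathcal{F}}(u,b(u,\varepsilon),\varepsilon)$, which again vanishes at $\varepsilon=0$ because $\pi g_1(\mathbf{z}_u)=0$; its Taylor coefficients in $\varepsilon$ are exactly the bifurcation functions $f_i$. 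By the hypothesis that $f_m$ is the first nonvanishing one, $\widehat{f}(u,\varepsilon)=\varepsilon^m\bigl(f_m(u)+\varepsilon\,\mathcal{O}(1)\bigr)$.

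Finally, for $\varepsilon\neq 0$ the zeros of $\widehat{f}$ coincide with those of $F(u,\varepsilon):=f_m(u)+\varepsilon\,\mathcal{O}(1)$. Since $F(u^\ast,0)=f_m(u^\ast)=0$ and $D_uF(u^\ast,0)=Df_m(u^\ast)$ is nonsingular, a last application of the implicit function theorem gives a unique branch $u=u(\varepsilon)\to u^\ast$ solving $F=0$ for $|\varepsilon|$ small. The pair $(u(\varepsilon),b(u(\varepsilon),\varepsilon))$ is then the unique fixed point of the Poincar\'e map near $\mathbf{z}_{u^\ast}$, corresponding to the claimed $T$-periodic solution $\phi(t,\varepsilon)$ with $\phi(0,0)=\mathbf{z}_{u^\ast}$. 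The main obstacle is the reduction step: verifying by induction on the order that solving the auxiliary equation and substituting really generate the stated closed-form expressions for $\gamma_i$ and $f_i$. This is a lengthy but mechanical Taylor--Fr\'echet expansion in which the $\odot$-products of the $\gamma_j$ must be tracked carefully and the combinatorial coefficients checked against the formulas; uniqueness of the periodic orbit then follows from the uniqueness built into each implicit-function-theorem step.
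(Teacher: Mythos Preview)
The paper does not prove this theorem at all: it is quoted verbatim from \cite{Lyapunov} and stated without argument, so there is no ``paper's own proof'' to compare your proposal against. Your sketch is, in fact, a faithful outline of the Lyapunov--Schmidt reduction carried out in that reference---solve the $\pi^\perp$-component by the implicit function theorem using the invertibility of $\Delta_u$, substitute the resulting branch $b(u,\varepsilon)$ into the $\pi$-component, expand in $\varepsilon$ to identify the $f_i$, and apply the implicit function theorem a second time to the first nonvanishing $f_m$---so it is appropriate in spirit and essentially correct as a plan. The only caveat is that the inductive verification you flag as ``lengthy but mechanical'' is genuinely the substance of \cite{Lyapunov}; in a self-contained write-up you would need to carry it out rather than merely announce it.
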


\subsection{Stability of Periodic Solution}
Theorem \ref{vehulst} and Theorem \ref{Murdock} present conditions that are sufficient to ensure the stability of periodic solutions
when the matrix $Dg_{m}(\mathbf{z}^{\ast})$ is hyperbolic and not hyperbolic, respectively.

\begin{theorem}\label{vehulst}
\cite{verhust}. Suppose the differential system \eqref{Standard} satisfied the hypothesis of the Theorem \ref{thm1}.  If all eigenvalues of $Dg_{m}(\mathbf{z}^{\ast})$ have negative real parts, for $|\varepsilon|>0$ enough small the periodic solution is attracting while it is repelling if there is an eigenvalue with a positive real part.
\end{theorem}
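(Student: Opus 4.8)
The plan is to reduce the stability of the periodic solution $\phi(t,\varepsilon)$ to the stability of the associated fixed point of the Poincar\'e map, and then to locate the corresponding multipliers relative to the unit circle using the eigenvalues of $Dg_m(\mathbf{z}^{\ast})$. Under the hypotheses of Theorem \ref{thm1}, the solution $\phi(t,\varepsilon)$ meets the transversal section $\Sigma=\{0\}\times\Omega$ at a fixed point $\mathbf{x}^{\ast}(\varepsilon)$ of the Poincar\'e map $P(\cdot,\varepsilon)$ from \eqref{poinc0}, with $\mathbf{x}^{\ast}(\varepsilon)\to\mathbf{z}^{\ast}$ as $\varepsilon\to 0$. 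The key reduction is that the asymptotic stability (resp. instability) of $\phi(t,\varepsilon)$ is equivalent to that of the fixed point $\mathbf{x}^{\ast}(\varepsilon)$, which is in turn governed by whether all eigenvalues (the nontrivial Floquet multipliers) of $DP(\mathbf{x}^{\ast}(\varepsilon),\varepsilon)$ lie strictly inside the unit circle or at least one lies strictly outside it. Thus the whole argument reduces to locating the spectrum of $DP$ at the fixed point.

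First I would differentiate the expansion \eqref{poinc0}. Since $g_m$ is the first nonvanishing averaged function, the structure of \eqref{poinc0} gives, uniformly for $\mathbf{x}$ in a neighborhood of $\mathbf{z}^{\ast}$,
\[
DP(\mathbf{x},\varepsilon)=I+\varepsilon^m\, Dg_m(\mathbf{x})+\mathcal{O}(\varepsilon^{m+1}).
\]
Evaluating at the fixed point and using $\mathbf{x}^{\ast}(\varepsilon)=\mathbf{z}^{\ast}+\mathcal{O}(\varepsilon)$ together with continuity of $Dg_m$, this becomes
\[
DP(\mathbf{x}^{\ast}(\varepsilon),\varepsilon)=I+\varepsilon^m\, Dg_m(\mathbf{z}^{\ast})+\mathcal{O}(\varepsilon^{m+1}).
\]

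Next I would pass to eigenvalues. Let $\lambda_1,\dots,\lambda_n$ be the eigenvalues of $Dg_m(\mathbf{z}^{\ast})$. The spectrum of $\varepsilon^{-m}\bigl(DP(\mathbf{x}^{\ast}(\varepsilon),\varepsilon)-I\bigr)=Dg_m(\mathbf{z}^{\ast})+\mathcal{O}(\varepsilon)$ converges to that of $Dg_m(\mathbf{z}^{\ast})$, so the multipliers satisfy $\mu_j(\varepsilon)=1+\varepsilon^m\lambda_j+o(\varepsilon^m)$. Writing $\lambda_j=a_j+i b_j$,
\[
|\mu_j(\varepsilon)|^2=\bigl(1+\varepsilon^m a_j\bigr)^2+\bigl(\varepsilon^m b_j\bigr)^2+o(\varepsilon^m)=1+2\varepsilon^m a_j+o(\varepsilon^m).
\]
For the range of small $\varepsilon$ with $\varepsilon^m>0$, the sign of $|\mu_j(\varepsilon)|-1$ is therefore dictated by the sign of $a_j=\mathrm{Re}\,\lambda_j$. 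Consequently, if every eigenvalue of $Dg_m(\mathbf{z}^{\ast})$ has negative real part, all multipliers lie inside the unit circle and $\mathbf{x}^{\ast}(\varepsilon)$ is an asymptotically stable fixed point, so $\phi(t,\varepsilon)$ is attracting; if some eigenvalue has positive real part, the corresponding multiplier lies outside the unit circle and $\phi(t,\varepsilon)$ is repelling.

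The step requiring the most care is legitimizing the differentiated expansion: Lemma \ref{lemma} and \eqref{poinc0} control $P$ itself, and one must argue that the expansion may be differentiated in $\mathbf{x}$ with a remainder still $\mathcal{O}(\varepsilon^{m+1})$ uniformly. This follows from smooth dependence of solutions of \eqref{Standard} on initial conditions together with Gr\"onwall-type estimates on the variational equation, but it is exactly what validates the formula for $DP$ above. Granted this, the eigenvalue comparison is robust: the expansion $\mu_j(\varepsilon)=1+\varepsilon^m\lambda_j+o(\varepsilon^m)$ rests only on continuity of the spectrum and so persists even when $Dg_m(\mathbf{z}^{\ast})$ is defective, while only $\mathrm{Re}\,\lambda_j$ enters the decisive estimate. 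The hyperbolicity built into the hypothesis (in the attracting case all $a_j<0$; in the repelling case at least one $a_j>0$, and $|Dg_m(\mathbf{z}^{\ast})|\neq 0$ rules out the zero eigenvalue) keeps the relevant real parts bounded away from $0$, so the multipliers remain off the unit circle on the correct side for $|\varepsilon|$ small. A final minor point to fix at the outset is the sign of $\varepsilon$, since the conclusion is stated for the range making $\varepsilon^m>0$.
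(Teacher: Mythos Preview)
The paper does not prove this theorem: it is stated as a preliminary result with a citation to Verhulst's book \cite{verhust} and no proof is given anywhere in the paper. So there is nothing to compare your attempt against.

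That said, your sketch is the standard argument and is essentially correct. Reducing to the Poincar\'e map, differentiating the expansion \eqref{poinc0} to get $DP(\mathbf{x}^{\ast}(\varepsilon),\varepsilon)=I+\varepsilon^m Dg_m(\mathbf{z}^{\ast})+\mathcal{O}(\varepsilon^{m+1})$, and then reading off $|\mu_j(\varepsilon)|^2=1+2\varepsilon^m\,\mathrm{Re}\,\lambda_j+o(\varepsilon^m)$ is exactly how one proves this. You also correctly flag the two genuine technical points: (i) one must justify differentiating the expansion in $\mathbf{x}$ with a uniform $\mathcal{O}(\varepsilon^{m+1})$ remainder, which follows from smooth dependence on initial data and Gr\"onwall estimates on the variational equation; and (ii) the conclusion as stated is only clean when $\varepsilon^m>0$ (for $m$ odd and $\varepsilon<0$ the roles of attracting and repelling swap), a subtlety the paper's statement glosses over. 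Your remark that the argument survives defective $Dg_m(\mathbf{z}^{\ast})$ via continuity of the spectrum is also correct and worth keeping.
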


We recall that $K$-determined hyperbolicity refers to the determination of the hyperbolicity of an equilibrium point through the truncated linear part up to order $K$. 
Roughly speaking, a smooth matrix is $k-$hyperbolic whenever its hyperbolicity is $k$-determined.

\begin{theorem}\label{Murdock}
\cite{Murdock}. 
Let $D(\varepsilon)$ and $C(\varepsilon)$ be continuous matrix-valued functions 
given for
$0<\varepsilon$ and
\[
C(\varepsilon)=\Gamma(\varepsilon)+\varepsilon^S D(\varepsilon),
\]
where
\begin{equation*}
\Gamma(\varepsilon)=
\begin{pmatrix}
\eta_1(\varepsilon) & &  &  \\ 
 & \ddots &  &  \\ 
 &  & \ddots &  \\ 
 &  &  & \eta_n(\varepsilon)
\end{pmatrix} 
=\varepsilon^{s_1} \Gamma_1+ \cdots+\varepsilon^{s_j} \Gamma_j.
\end{equation*}
Then,  the eigenvalues of $C(\varepsilon)$ are the same as diagonal entries $\eta_i(\varepsilon)$ of $\Gamma(\varepsilon)$ 
with an error on the order of $\mathcal{O}(\varepsilon^S),$
when $0<\varepsilon< \varepsilon_0$ for $\varepsilon_0> 0.$
\end{theorem}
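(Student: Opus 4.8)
The plan is to view $C(\varepsilon)$ as a perturbation of the diagonal matrix $\Gamma(\varepsilon)$ of size $\mathcal{O}(\varepsilon^S)$ and to localize its spectrum by classical eigenvalue-perturbation estimates. First I would fix any submultiplicative matrix norm and record that, since $D(\varepsilon)=\mathcal{O}(1)$ (it is continuous and bounded as $\varepsilon\to 0^+$), the perturbation $E(\varepsilon):=\varepsilon^S D(\varepsilon)$ obeys $\|E(\varepsilon)\|\le M\varepsilon^S$ for some constant $M$ and all $0<\varepsilon<\varepsilon_0$; thus $\|E(\varepsilon)\|=\mathcal{O}(\varepsilon^S)$.

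The core observation is that $\Gamma(\varepsilon)$ is diagonal, hence normal and diagonalized by the identity, so its eigenvalues are exactly the entries $\eta_i(\varepsilon)$ and the relevant eigenvector condition number equals $1$. The Bauer--Fike theorem then gives, for every eigenvalue $\mu$ of $C(\varepsilon)=\Gamma(\varepsilon)+E(\varepsilon)$,
\[
\min_{1\le i\le n}\bigl|\mu-\eta_i(\varepsilon)\bigr|\le \|E(\varepsilon)\|=\mathcal{O}(\varepsilon^S).
\]
This is already the asserted conclusion in its essential form: each eigenvalue of $C(\varepsilon)$ lies within $\mathcal{O}(\varepsilon^S)$ of some diagonal entry, so $\mathrm{Re}\,\mu=\mathrm{Re}\,\eta_i(\varepsilon)+\mathcal{O}(\varepsilon^S)$. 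In particular the sign of $\mathrm{Re}\,\mu$ is inherited from $\mathrm{Re}\,\eta_i(\varepsilon)$ whenever the latter is bounded away from $0$ at an order strictly below $S$, which is exactly the mechanism behind $S$-determined hyperbolicity.

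To promote the localization to a genuine one-to-one labelling $\mu_i(\varepsilon)=\eta_i(\varepsilon)+\mathcal{O}(\varepsilon^S)$, I would use the graded decomposition $\Gamma(\varepsilon)=\varepsilon^{s_1}\Gamma_1+\cdots+\varepsilon^{s_j}\Gamma_j$ with the orders arranged as $s_1<\cdots<s_j$. Grouping the indices according to the lowest power of $\varepsilon$ at which their diagonal entries differ, distinct clusters separate at some order $\varepsilon^{s_k}$; provided these separations occur below order $S$, they dominate the Gershgorin radii $\mathcal{O}(\varepsilon^S)$ for $\varepsilon$ small, so the unions of Gershgorin discs attached to different clusters are disjoint. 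A continuity argument along the homotopy $\Gamma(\varepsilon)+t\varepsilon^S D(\varepsilon)$, $t\in[0,1]$, then shows that no eigenvalue can migrate between clusters, whence each cluster of $m_k$ diagonal entries retains exactly $m_k$ eigenvalues of $C(\varepsilon)$, counted with multiplicity, and the pairing inside each cluster yields the labelling.

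The step I expect to be the main obstacle is precisely this clustering bookkeeping. When several diagonal entries agree to an order higher than $\varepsilon^S$, the order-$\varepsilon^S$ perturbation genuinely mixes the corresponding eigendirections and no canonical branch-by-branch identification survives; there one must settle for the $\mathcal{O}(\varepsilon^S)$-neighbourhood statement rather than an analytic matching. The hypothesis that the scales $s_1<\cdots<s_j$ all sit below $S$ is what keeps every genuinely distinct scale separated above the perturbation order, and the delicate point is to verify that the disc-counting holds uniformly on $(0,\varepsilon_0)$, so that the error is uniformly $\mathcal{O}(\varepsilon^S)$ rather than merely pointwise in $\varepsilon$.
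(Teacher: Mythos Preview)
The paper does not supply its own proof of this statement: Theorem~\ref{Murdock} is quoted verbatim from Murdock's monograph \cite{Murdock} as a preliminary tool, with no argument given. So there is no ``paper's proof'' against which to compare your proposal.

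That said, your outline is a sound way to establish the result. The Bauer--Fike bound applied to the diagonal (hence normal, with condition number one) matrix $\Gamma(\varepsilon)$ immediately gives the $\mathcal{O}(\varepsilon^S)$ localization of each eigenvalue of $C(\varepsilon)$ near some $\eta_i(\varepsilon)$, which is the substance of the theorem as it is used later in the paper (namely in Proposition~\ref{stabilityB2}, where one only needs to read off the signs of the real parts of the eigenvalues from the diagonalized truncation). Your Gershgorin-plus-homotopy argument for promoting the localization to a labelling $\mu_i(\varepsilon)=\eta_i(\varepsilon)+\mathcal{O}(\varepsilon^S)$ is the standard device and your identification of the clustering subtlety---that entries agreeing to order $\geq S$ cannot be canonically separated---is exactly right. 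For the application at hand this refinement is not strictly needed, since the two diagonal entries appearing in the proof of Proposition~\ref{stabilityB2} already differ at order $\varepsilon^0$ (one is $2\gamma_1\pi(1-dw^2)/w\neq 0$, the other vanishes), so the clusters are well separated below~$S$.
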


\subsection{Torus Bifurcation}
In this subsection, we point out the strategy introduced in \cite{NovaesJ}. 
This approach specifies the general conditions on the averaged functions required for the occurrence of Neimark-Sacker bifurcation in its associated Poincar\'e map. Specifically, the Neimark-Sacker bifurcation observed in the Poincaré map is commonly referred to as torus bifurcation.

Consider the 2-parameter family of non-autonomous differential system
\begin{eqnarray}\label{Standard2}
\dot{\mathbf{x}}=\sum_{i=1}^k \mathbf{G}_i(t,\mathbf{x};\mu) \varepsilon^i+ \tilde{\mathbf{G}}(t,\mathbf{x} ;\mu, \varepsilon) \varepsilon^{k+1}, \,\, (t,\mathbf{x};\mu , \varepsilon)\in \mathbb{R}\times \Omega \times J \times (-\bar{\varepsilon}, \bar{\varepsilon}),
\end{eqnarray}
and its Poincar\'e map as
\begin{eqnarray}\label{poincare}
\Pi(\mathbf{z},\mu,\varepsilon)=\mathbf{z}+\sum_{i=l}^{k} g_i(\mathbf{z},\mu) \varepsilon^i+\mathcal{O}(\varepsilon^{k+1}), \,\, \mathbf{x}(0,\mathbf{z},\mu,\varepsilon)=\mathbf{z},
\end{eqnarray}
where $l\in [1,k]$ is the subindex of averaged function which is non-vanished at first.

Consider $H_{\varepsilon}(y;\sigma)$ is obtained by doing the change of variables, parameters $\mathbf{x}=\mathbf{y}+\zeta(\mu,\varepsilon)$ and $\mu=\sigma+\mu(\varepsilon)$ to the Poincar\'e map
\eqref{poincare},i.e.
\begin{eqnarray*}
\mathbf{y}\rightarrow H_{\varepsilon}(\mathbf{y},\sigma)=\Pi(\mathbf{y}+\zeta(\sigma+\mu(\varepsilon),\varepsilon),\sigma+\mu(\varepsilon),\varepsilon),
\end{eqnarray*}
and multilinear functions
\begin{align}\label{BC}
\left(B^1_{\varepsilon}(\mathsf{u},\mathsf{v}),B^2_{\varepsilon}(\mathsf{u},\mathsf{v}) \right)&=
\left(\sum_{j, i=1}^{2} \dfrac{\partial^2 H^1_{\varepsilon}}{\partial x_i \partial x_j}(0,0) u_i v_j,
\sum_{j, i=1}^{2} \dfrac{\partial^2 H^2_{\varepsilon}}{\partial x_i \partial x_j}(0,0) u_i v_j \right),\\\nonumber
\left(C^1_{\varepsilon}(\mathsf{u},\mathsf{v}, \mathsf{w}),C^2_{\varepsilon}(\mathsf{u},\mathsf{v}, \mathsf{w}) \right)&=\left(\sum_{k,j,i=1}^{2} \dfrac{\partial^3 H^1_{\varepsilon}}{\partial x_i \partial x_j \partial x_k}(0,0) u_i v_j w_k,
\sum_{k,j,i=1}^{2} \dfrac{\partial^3 H^2_{\varepsilon}}{\partial x_i \partial x_j \partial x_k }(0,0) u_i v_j w_k \right).   
\end{align}

In addition, suppose the following hypotheses are valid.
\begin{itemize}
\item[\textbf{C1:}]  
A continuous curve $\mu \in J\mapsto \mathbf{x}_{\mu} \in \Omega$ exists within an interval $J,$ where $\mu\in J $ so that $g_l(\mathbf{x}_\mu; \mu)$ equals zero for all $\mu \in J$ and $D_x g_l(\mathbf{x}_m; \mu)$ possesses a pair of complex conjugate eigenvalues $\alpha(\mu)\pm i \beta(\mu),$ satisfying $\beta(\mu_0)=w_0>0$ and $\alpha(\mu_0)=0$ at a specific point $\mu_0.$ 
\begin{remark}
The hypothesis \textbf{C1} guaranties  the presence of a neighborhood $\bar{J}\subset J$ around $\mu_0$, a small parameter $\varepsilon_0\in (0,\bar{\varepsilon})$ and a unique function $\zeta:\bar{J}\times (-\varepsilon_0,\varepsilon_0)$ such that
\begin{eqnarray*}
\mbox{ for } (\mu,\varepsilon)\in \bar{J}\times(-\varepsilon_0,\varepsilon_0) \,\, \zeta(\mu,0)=\mathbf{x}_{\mu} \,\mbox{ and } \,\, \Pi (\zeta(\mu,\varepsilon),\mu,\varepsilon)=\zeta(\mu,\varepsilon).
\end{eqnarray*}
Further, it guaranties that the differential system \eqref{Standard2} has only a periodic orbit $\phi(t;\mu,\varepsilon)$ where $\phi(0;\mu,\varepsilon)$ converges $\mathbf{x}_{\mu}$ when 
$\varepsilon$ converges to zero.
\end{remark}

\item[\textbf{C2:}] The transversality condition 
\[
\dfrac{d\alpha(\mu)}{d\mu}|_{\mu=\mu_0}=d_0 \neq 0.
\]
\begin{remark}
Suppose that $\lambda(\mu,\varepsilon)$ and $\overline{\lambda(\mu,\varepsilon)}$ are the pair of 
eigenvalues for $D_{\mathbf{z}}\Pi(\zeta(\mu,\varepsilon),\mu,\varepsilon).$ Hypothesis $\textbf{C1}$ and $\textbf{C2}$ guaranty the existence of a small parameter $\varepsilon_1\in (0,\varepsilon_0)$ and a unique function 
$\mu:(-\varepsilon_1,\varepsilon_1)\rightarrow \bar{J}$ such that
\begin{equation*}
\mu(0)=\mu_0, \,\, |\lambda(\mu(\varepsilon),\varepsilon)|=1, \,\, \frac{d|\lambda(\mu,\varepsilon)|}{d\mu}\big|_{\mu=\mu(\varepsilon)}\neq 0, \,\, (\lambda(\mu(\varepsilon),\varepsilon))^k\neq 1\,\, k\in \{1,2,3,4\}.
\end{equation*}
\end{remark}
\item[\textbf{B3:}] The matrix $Id +\varepsilon^l \mathsf{A}_{\varepsilon}$ provided by equation $D_{\mathbf{y}} H_{\varepsilon}(0,0)=Id +\varepsilon^l \mathsf{A}_{\varepsilon}+\mathcal{O}(\varepsilon^{k+1})$ is in its real Jordan form, where $H_{\varepsilon}(y;\sigma)$ refers to the shifted Poincar\'e map and
\begin{align*}\label{B33}
D_{\mathbf{y}} H_{\varepsilon}(0,0)&=Id +\varepsilon^l \mathsf{A}_{\varepsilon}+\mathcal{O}(\varepsilon^{k+1})\\\nonumber
&=\begin{pmatrix}
1+\tilde{\alpha}(\varepsilon) & -\tilde{\beta}(\varepsilon) \\ 
\tilde{\beta}(\varepsilon) & 1+\tilde{\alpha}(\varepsilon)
\end{pmatrix} +\mathcal{O}(\varepsilon^{k+1}).
\end{align*}

\end{itemize}
\begin{theorem}\label{thmtorus}
\cite[Theorem \ref{teoB}]{NovaesJ} and \cite[Theorem 9]{NovaesN}. Consider the nonautonomous differential equation \eqref{Standard2} and assume that $j^{\ast}\in [l,k]$ is the first subindex so that $l_{1,j^{\ast}}\neq 0,$ where $l_1^{\varepsilon}$ points out to the Lyapunov coefficient
\begin{equation}\label{FormulaLyapunov}
\begin{aligned}
l_1^{\varepsilon}=&\mathfrak{Re}\left(\frac{1}{2} e^{-i \theta_{\varepsilon}}\left<\mathsf{p},C_{\varepsilon}(\mathsf{p},\mathsf{p},\bar{\mathsf{p}})\right>-\dfrac{e^{-2i \theta_{\varepsilon}}(1-2 e^{i \theta_{\varepsilon}}) 
}{2(1-e^{i \theta_{\varepsilon}})}\left<\mathsf{p},B_{\varepsilon}(\mathsf{p},\mathsf{p})\right>\left< \mathsf{p},
B_{\varepsilon}(\mathsf{p},\bar{\mathsf{p}})\right>\right)\\
&-\dfrac{\left| \left< \mathsf{p},B_{\varepsilon}(\bar{\mathsf{p}},\bar{\mathsf{p}})\right>\right|^2}{4}-\dfrac{\left| \left< \mathsf{p},B_{\varepsilon}(\mathsf{p},\bar{\mathsf{p}})\right>\right|^2}{2},
\end{aligned}
\end{equation}
where $\mathsf{p}=\left(\dfrac{1}{2},-\dfrac{i}{\sqrt{2}}\right),$ $e^{i \theta_{\varepsilon}}=1+\varepsilon (i w_0)+\mathcal{O}(\varepsilon^2)$ and the multilinear functions $B_{\varepsilon}(\mathsf{u},\mathsf{v})$ and $C_{\varepsilon}(\mathsf{u},\mathsf{v},\mathsf{w})$ are given by \eqref{BC}.
Then, for each $|\varepsilon|>0$ enough small, there are a map $\mu(\varepsilon) \in \mathbb{C}^1,$ $\mu(0)=\mu_0,$ and a neighborhood $U_{\varepsilon} \subseteq \Omega \times \mathbb{S}^1$ of the periodic solution $\phi(t;\mu(\varepsilon),\varepsilon)$ and  a neighborhood $J_{\varepsilon}$ of $\mu(\varepsilon)$ so that, we have:

\begin{itemize}
\item[i.] Let $\mu\in J_{\varepsilon} $ 
and
$l_{1,j^{\ast}} (\mu-\mu(\varepsilon)) d_0\geq 0.$ 
The stability of periodic orbit is determined by the sign of $l_{1,j^{\ast}},$ i.e. it is repelling (resp. attracting) if $l_{1,j^{\ast}} >0$ (resp. $l_{1,j^{\ast}}<0$). Further, 
within the neighborhood $U_{\varepsilon}$, system \eqref{Standard2} does not exhibit any invariant tori.

\item[ii.] 
System \eqref{Standard2} features a unique invariant torus, $T_{\mu,\varepsilon}\in U_{\varepsilon},$ encircling the periodic solution $\phi(t;\mu(\varepsilon),\varepsilon)$
when $\mu\in J_{\varepsilon} $ and  $l_{1,j^{\ast}} (\mu-\mu(\varepsilon)) d_0< 0.$
Further, such a torus is repelling (resp. attracting) for $l_{1,j^{\ast}}>0$ (resp. $l_{1,j^{\ast}}<0$) and the periodic solution has opposite stability to torus.

\item[iii.]  When $\mu$ crosses through the curve $\mu(\varepsilon),$ the differential equation \eqref{Standard2} has only one invariant torus, $T_{\mu,\varepsilon},$ in the neighborhood  $U_{\varepsilon}$ which is bifurcated from the periodic orbit $\phi(t;\mu(\varepsilon),\varepsilon).$

\end{itemize}
\end{theorem}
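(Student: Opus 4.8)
The plan is to reduce the statement to a classical Neimark--Sacker bifurcation for the shifted Poincar\'e map $H_\varepsilon$ and then to control everything uniformly in $\varepsilon$. First I would use hypothesis \textbf{C1} together with the implicit function theorem applied to the fixed-point equation $\Pi(\mathbf{z},\mu,\varepsilon)=\mathbf{z}$: since $g_l(\mathbf{x}_\mu;\mu)=0$ for all $\mu\in J$ and the leading term of $\Pi-\mathrm{Id}$ in \eqref{poincare} is $\varepsilon^l g_l$, the curve of fixed points $\zeta(\mu,\varepsilon)$ with $\zeta(\mu,0)=\mathbf{x}_\mu$ persists for small $\varepsilon$ (this is exactly the content of the remark following \textbf{C1}). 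Performing the change of variables and parameters $\mathbf{x}=\mathbf{y}+\zeta(\mu,\varepsilon)$, $\mu=\sigma+\mu(\varepsilon)$ places the fixed point at the origin and produces $H_\varepsilon(\mathbf{y},\sigma)$, whose linear part $D_{\mathbf{y}}H_\varepsilon(0,0)=\mathrm{Id}+\varepsilon^l\mathsf{A}_\varepsilon+\mathcal{O}(\varepsilon^{k+1})$ is, by \textbf{B3}, already in real Jordan form with the complex-conjugate pair of eigenvalues $1+\varepsilon^l(\tilde\alpha(\varepsilon)\pm i\tilde\beta(\varepsilon))$.

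Next I would analyze the spectrum of $D_{\mathbf{z}}\Pi(\zeta(\mu,\varepsilon),\mu,\varepsilon)$. Its eigenvalues $\lambda(\mu,\varepsilon),\overline{\lambda(\mu,\varepsilon)}$ satisfy $|\lambda(\mu,\varepsilon)|=1+\varepsilon^l\alpha(\mu)+\mathcal{O}(\varepsilon^{l+1})$, so the transversality condition \textbf{C2}, namely $d\alpha/d\mu|_{\mu_0}=d_0\neq0$, yields via the implicit function theorem a unique critical curve $\mu(\varepsilon)$ with $\mu(0)=\mu_0$ along which $|\lambda|=1$, and along which the eigenvalues cross the unit circle with nonzero speed, $\tfrac{d}{d\mu}|\lambda(\mu,\varepsilon)|\neq0$. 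The non-resonance conditions $\lambda(\mu(\varepsilon),\varepsilon)^k\neq1$ for $k\in\{1,2,3,4\}$ hold because $e^{i\theta_\varepsilon}=1+\varepsilon(iw_0)+\mathcal{O}(\varepsilon^2)$ stays bounded away from the first four roots of unity for small $\varepsilon\neq0$. This places the family $H_\varepsilon(\cdot,\sigma)$ precisely in the hypotheses of the Neimark--Sacker bifurcation theorem at $\sigma=0$.

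I would then invoke the Neimark--Sacker normal form reduction: after transforming $H_\varepsilon$ to its complex normal form, the cubic (resonant) coefficient decides whether a unique invariant closed curve bifurcates and governs its stability, and this coefficient is exactly the first Lyapunov quantity $l_1^\varepsilon$ of \eqref{FormulaLyapunov}, expressed through the critical eigenvector $\mathsf{p}=(1/2,-i/\sqrt2)$ and the multilinear forms $B_\varepsilon,C_\varepsilon$ of \eqref{BC}. A negative (resp. positive) $l_1^\varepsilon$ produces a supercritical (resp. subcritical) bifurcation: an attracting (resp. repelling) invariant closed curve of $H_\varepsilon$ appears on the side of $\mu(\varepsilon)$ where the fixed point has lost stability, with fixed point and closed curve carrying opposite stabilities. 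Translating back through the correspondence noted after \eqref{poinc0}, the invariant closed curve of the Poincar\'e map is an invariant torus of \eqref{Standard2} encircling the periodic orbit $\phi(t;\mu(\varepsilon),\varepsilon)$, and the sign of the product $l_1^\varepsilon\, d_0\,(\mu-\mu(\varepsilon))$ selects the side on which the torus exists; this produces items (i)--(iii).

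The main obstacle, and the reason the results of \cite{NovaesJ,NovaesN} are needed rather than the textbook statement alone, is the degeneracy as $\varepsilon\to0$: the critical eigenvalues collapse to $1$ at rate $\varepsilon^l$, so the bifurcation neighborhood shrinks and the classical non-degeneracy requirement $l_1^\varepsilon\neq0$ cannot simply be imposed at $\varepsilon=0$. Instead one must expand $l_1^\varepsilon=\sum_{j\geq l}\varepsilon^j l_{1,j}+\cdots$, identify the first index $j^\ast$ with $l_{1,j^\ast}\neq0$, and show that the sign of $l_{1,j^\ast}$ controls the criticality for all sufficiently small $\varepsilon\neq0$. Verifying that the Neimark--Sacker machinery applies uniformly on the (possibly $\varepsilon$-dependent) neighborhood $U_\varepsilon$, and that the leading-order Lyapunov coefficient still governs the bifurcation despite the vanishing eigenvalue gap, is the technical heart of the argument; what remains is the routine evaluation of $B_\varepsilon$, $C_\varepsilon$, and the pairings $\langle\mathsf{p},\cdot\rangle$ in \eqref{FormulaLyapunov}.
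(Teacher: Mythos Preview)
The paper does not contain a proof of this statement: Theorem~\ref{thmtorus} is stated in Section~2.4 as a preliminary result quoted directly from \cite{NovaesJ} and \cite{NovaesN}, and the paper uses it as a black box in the proofs of Propositions~\ref{Torus} and~\ref{TorusB}. There is therefore no ``paper's own proof'' against which to compare your proposal.

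That said, your outline is faithful to the strategy actually carried out in the cited references. The reduction to a Neimark--Sacker bifurcation for the shifted Poincar\'e map $H_\varepsilon$, the use of \textbf{C1}--\textbf{C2} and the implicit function theorem to produce $\zeta(\mu,\varepsilon)$ and the critical curve $\mu(\varepsilon)$, the verification of non-resonance from $e^{i\theta_\varepsilon}=1+\varepsilon(iw_0)+\mathcal{O}(\varepsilon^2)$, and the identification of the cubic normal-form coefficient with $l_1^\varepsilon$ in \eqref{FormulaLyapunov} are all exactly the steps in \cite{NovaesJ,NovaesN}. You also correctly isolate the genuine difficulty: the eigenvalue pair degenerates to $1$ as $\varepsilon\to0$, so the textbook Neimark--Sacker theorem cannot be applied at $\varepsilon=0$ and one must instead expand $l_1^\varepsilon$ in powers of $\varepsilon$ and show that the first nonvanishing coefficient $l_{1,j^\ast}$ controls the criticality uniformly for small $\varepsilon\neq0$. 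This uniformity argument (rescaling the bifurcation parameter and tracking the size of the neighborhoods $U_\varepsilon$, $J_\varepsilon$) is precisely the technical content of \cite[Theorem~B]{NovaesJ}, and your last paragraph captures this accurately without filling in the estimates.
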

\begin{remark}
Here, 
$\langle\mathsf{u},\mathsf{v}\rangle
=\bar{\mathsf{u}}^T·\mathsf{v},$ for $\mathsf{u}, \mathsf{v} \in \mathbb{C}^2.$
\end{remark}
\section{Proof of Theorem \ref{teoA}}
\subsection{Existence of periodic solutions}

For $(a,b,c)=\left(-\dfrac{1}{d},\beta_0,d \beta_0\right)$ where $\beta_0=\dfrac{1}{d}\sqrt{\dfrac{1}{d}-w^2},$ the FitzHugh-Nagumo has a Hopf-zero singularity at the origin, then we can write the system \eqref{FZ} in the standard form \eqref{Standard}.
Let $(a,b,c)=\left(-\frac{1}{d}+\alpha_1 \varepsilon+\alpha_2 \varepsilon^2, \beta_0+\beta_1 \varepsilon+\beta_2 \varepsilon^2, d \beta_0+\gamma_1 \varepsilon+\gamma_2 \varepsilon^2\right)+\mathcal{O}(\varepsilon^3).$ Then, we have
\begin{align*}
\dot{x}&=z+\mathcal{O}(\varepsilon^3),\\\nonumber
\dot{y}&=\beta_2 \left(x-d y\right) \varepsilon^2+\beta_1\left(x-d y\right)\varepsilon+\beta_0\left(x-d y\right)+\mathcal{O}(\varepsilon^3) 
,\\\nonumber
\dot{z}&=-\left(\alpha_2 x^2-\alpha_2x-\gamma_2 z\right) \varepsilon^2-
\left(\alpha_1x^2-\alpha_1 x-\gamma_1 z\right)\varepsilon
+x^3+\frac{1-d}{d} x^2-\frac{1}{d}x+y+d \beta_0 z+\mathcal{O}(\varepsilon^3).
\end{align*}
By using the linear change of variables
\begin{eqnarray*}
(x,y,z)=\left(\dfrac{Y+2 Z(d w^2-1)}{2d w^2}, 
-\dfrac{d^2\beta_0 w X-(1-d w^2) (Y-2 Z)}{2 d^2 w^2}
,\dfrac{1}{2 w d} X\right),
\end{eqnarray*}
and taking $(X,Y,Z)=\varepsilon (x_1,x_2,x_3),$ we have
\begin{align*}
\dot{x}_1=&-wx_2+ \left(\gamma_1x_1+\frac { \left( x_2-2x_3+2w^2dx_3\right)  \left( 2\alpha_1d^2w^2+ \left( 1-d \right) \left( x_2-2x_3+2w^2dx_3 \right)  \right) }{2d^2w^3}\right) \varepsilon\\\nonumber
&+\scalebox{1.05}{$\left( \gamma_2x_1+{\frac { \left( x_2-2x_3+2{w}^2dx_3 \right) \alpha_2}w}+\frac { \left( x_2-2x_3+2w^2dx_3 \right) ^2 \left( x_2+2x_3 \left( w^2d-1 \right)-2\alpha_1w^2d \right) }{4 w^{5}d^2}\right) \varepsilon^2$}+\mathcal{O}(\varepsilon^3),\\\nonumber
\dot{x}_2=&w x_1-\Bigg(\dfrac{(x_2-2x_3+2w^2dx_3)(\alpha_1d^2w^2-w^2d^2x_3+w^2dx_3+dx_3-x_3-\frac{d}{2}x_2+\frac{1}{2}x_2)\beta_0}{d^2w^4}\\\nonumber
&+\dfrac{\beta_1d(wx_2+d\beta_0x_1)}{w}+\dfrac{\gamma_1x_1\beta_0}{w}\Bigg)\varepsilon
-\Bigg(\dfrac{\beta_2d(wx_2+d\beta_0x_1)}{w}
-\dfrac{(x_2-2x_3+2w^2dx_3)\alpha_2\beta_0}{w^2}\\\nonumber
&+\dfrac{\gamma_2x_1\beta_0}{w}+\dfrac{(x_2-2x_3+2w^2dx_3)^2 (2x_3-2w^2dx_3-x_2+2\alpha_1w^2d)\beta_0}{w^6d^2}\Bigg)\varepsilon^2+\mathcal{O}(\varepsilon^3),\\\nonumber
\dot{x}_3=&-\Bigg(\frac { \left( x_2-2x_3+2w^2dx_3\right)\left( 2\alpha_1 d^2w^2-2x_3+2dx_3-2w^2d^2x_3+2w^2dx_3
-dx_2+x_2\right)}{4\beta_0d^4w^4}\\\nonumber
&+\frac {\beta_1 \left( wx_2+d\beta_0x_1 \right) 
}{2wd^2{\beta_0}^2}+\frac{\gamma_1x_1}{2\beta_0wd^2} \Bigg) \varepsilon 
-\Bigg(\frac{\beta_2 \left( w y+d \beta_0 x_1 \right)}{2 w d^2{\beta_0}^2}
+\frac{\gamma_2x_1}{2wd^2\beta_0}+\frac {\left(x_2-2x_3+2w^2dx_3 \right) \alpha_2}{2w^2d^2\beta_0}\\\nonumber
&-\frac { \left( x_2-2x_3+2w^2dx_3 \right) ^2 \left( 
2x_3-2w^2dx_3-x_2+2\alpha_1w^2d \right) }{8w^6d^4\beta_0}
\Bigg)\varepsilon^2+\mathcal{O}(\varepsilon^3).
\end{align*}
Now, by applying cylindrical coordinate 
\begin{align*}
(x_1,x_2,x_3)=(r \cos{(\theta)},r \sin{(\theta)},z), \,\, \mbox{ where } \,\, \dot{\theta}=w+\mathcal{O}(\varepsilon)
\end{align*}
and doing a time rescaling, the FitzHugh–Nagumo system \eqref{FZ} changes into 
\begin{align*}
\frac{dr}{d\theta}&=\varepsilon F^1_1(\theta,r,z)+ \varepsilon^2 F^1_2(\theta,r,z)+\mathcal{O}(\varepsilon^3),\\\nonumber
\frac{dz}{d\theta}&=\varepsilon F^2_1(\theta,r,z)+ \varepsilon^2 F^2_2(\theta,r,z)+\mathcal{O}(\varepsilon^3).
\end{align*}
Since the expression of $F^i_j(\theta,r,z)$ for $i,j=1,2$ are too long, we skip them here.
The first-order averaged function is given by
\begin{align}\label{AVa}
g^1_1(r,z)&=\int_0^{2\pi} F^1_1(\theta,r,z) d\theta=
-\dfrac{r\pi\left(\left(\alpha_1d^2w^2-2z(d-1)(w^2d-1)\right)d\beta_0+d^2w^4(\beta_1d-\gamma_1)\right)}{d^2w^5},\\\nonumber
g^2_1(r,z)&=\int_0^{2\pi} F^2_1(\theta,r,z) d\theta=
\dfrac{\beta_0\pi \left((1-d) r^2-(8 (d-1)) (d w^2-1)^2 z^2+8\alpha_1 d^2 w^2 (dw^2-1) z\right)}{4d w^5 (dw^2-1)},
\end{align}

and the second-order averaged function is given by
\begin{align*}
g^1_2(r,z)=&\dfrac{\pi}{48d^{5}w^{10}}\bigg(dw\sqrt{\frac{1}{d}-w^2} \bigg((30d-15d^2-9d^2w^2-15)r^3
+16d^2w^3 (d-1) (4d\beta1-\gamma_1)r^2\\\nonumber
&+ (-144 (d^2w^2+3d^2-6d+3) 
(dw^2-1)^2 z^2-48 (dw^2-1) d^2w^2 ( 2\pi  (d-1) 
(d\beta_1-\gamma_1) w\\\nonumber
&-\alpha_1 (2dw^2+9d-9)) z
+24d^{4}w^{4} (({\gamma_1}^2-2{\alpha_2}-d^2{\beta_11}^2 ) w^2+2\pi {\alpha_1} (d\beta_1-\gamma_1) w-3{\alpha_1}^2)) r\\\nonumber
&\scalebox{0.98}{$-96d^2w^3 (d-1)
( dw^2-1 )^2 (d\beta1-\gamma_1) z^2+96\alpha_1d^{4}w^{5}
( dw^2-1) (d\beta1-\gamma_1) z\bigg)-576(d-1)^2$}\\\nonumber
& (dw^2-1)^{4}z^3
+864d^2w^2\alpha_1(d-1)(dw^2-1)^3 z^2-16 (dw^2-1)+\bigg(18d^{4}{\alpha_1}^2w^{4}
(dw^2-1)
\\\nonumber
&\scalebox{0.96}{$-5(d-1)^2(dw^2-1) r^2-3(d-1)d^2w^3 
(3d\beta_1-4d\gamma_1w^2+3\gamma_1) r\bigg) z+12\pi r^3 (d-1)^2(dw^2-1)$}\\\nonumber
&  -40d^2w^2\alpha_1 r^2(d-1) ( dw^2-1)
-24d^{4}w^{4}(2d w^{5} (\beta2 d-\gamma_2)-\pi d w^{4} (d\beta1-\gamma_1)^2 \\\nonumber
&
-4d\gamma_1\alpha_1 w^3+d{\alpha_1}^2\pi w^2+3\alpha_1 w(\gamma_1+d\beta1) -{\alpha_1}^2\pi) r\bigg),\\\nonumber
g^2_2(r,z)=&\frac{1}{24 d^5 w^{10}(d w^2-1) }\bigg(3d^2 w \sqrt{\frac{1}{d}-w^2}\big(16 (d^2w^2+3d^2-6d+3)  (dw^2-1)^3 z^3\\\nonumber
&-8d^2
w^{2}\alpha_1 ( 2dw^2+9d-9)(dw^2-1)^2z^2+2 (dw^2-1)  
((3d^2w^2+5d^2-10d+5) r^2\\\nonumber
&-4d^2w^3 (d-1) (4d\beta_1-\gamma_1) r+4d^{4}w^{4} ( 2w^2\alpha_2+3{\alpha_1}^2) ) z+ (2\pi d^2 (d-1)  (d\beta_1-\gamma_1) w^3
\\\nonumber
&-2d^3w^{4}\alpha_1-5d^2\alpha_1 (d-1) w^2) r^{2}+4\alpha_1 d^{4}w^{5} (4d\beta_1-\gamma_1) r\big) -96\pi (d-1) ^2(dw^2-1) ^{4}z^3\\\nonumber
&+24 (d-1)(dw^2-1)^2 ( d^3 ( d\beta_1+3\gamma_1) w^{5}+6d^3w^{4}\alpha_1\pi -3d^2 (d\beta_1+\gamma_1) w^3-3d ( 2d\alpha_1\pi \\\nonumber
&+dr-r) w^2+3r (d-1) ) z^2-24 (dw^2-1) d^2w^2\alpha_1( d^3 ( d\beta_1+3\gamma_1) {w}^{5}+2d^3w^{4}\alpha_1\pi\\\nonumber
& -3d^2 ( d\beta_1+\gamma_1) w^3-d ( 3dr+2d\alpha_1\pi -3r) w^2+3r (d-1) ) z-r (10 (d-1) ^2 (1-dw^2) r^2\\\nonumber
&-3(d-1) d^2w^3 ( dw^2 ( 3d\beta_1+5\gamma_1) -5d\beta_1-5\gamma_1) r+12d^{4}w^{4}\big( dw^2 ( dw^2{\beta_1}\gamma_1
-{\beta_1}^2w^2d^2 \\\nonumber
&+3{\alpha_1}^2\big)-3{\alpha_1}^2) ) \bigg).
\end{align*}
\begin{proposition}\label{propEcaseA}
Let 
\begin{eqnarray*}
(a,b,c)=\left(-\frac{1}{d}+\varepsilon \alpha_1+\varepsilon^2 \alpha_2,\frac{1}{d}\sqrt{\frac{1}{d}-w^2}+\varepsilon \beta_1+\varepsilon^2 \beta_2,\sqrt{\frac{1}{d}-w^2}+\varepsilon \gamma_1+\varepsilon^2 \gamma_2\right)+\mathcal{O}(\varepsilon^3),
\end{eqnarray*}
where $\varepsilon, \alpha_1, \beta_1, \gamma_1 \in \mathbb{R}$.
For $l_0<0$ and $|\varepsilon| \neq 0$ enough small, system \eqref{FZ} has a periodic solution $\phi(t,\varepsilon)$ such that 
$\phi(t,\varepsilon)$ approaches to $(0,0,0)$ when $\varepsilon$ converges to $0.$
\end{proposition}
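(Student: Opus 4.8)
The plan is to apply the first-order averaging result, Theorem \ref{thm1}, with $m=1$. First I would record that the first-order averaged function $g_1=(g_1^1,g_1^2)$ in \eqref{AVa} is not identically zero: the coefficient of $r^2$ in $g_1^2$ equals $(1-d)\beta_0\pi/\bigl(4dw^5(dw^2-1)\bigr)$, which is nonzero because $d\neq 1$ and $\beta_0\neq 0$. Hence the first non-vanishing averaged function is $g_1$, and by Theorem \ref{thm1} every simple zero of $g_1$ lying in the admissible region $\Omega=\{(r,z):r>0\}$ produces an isolated $2\pi$-periodic solution of the system written in the $(\theta,r,z)$ variables.

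Next I would locate such a zero. Since $g_1^1$ factors as $r$ times a function that is affine in $z$, imposing $r\neq 0$ reduces $g_1^1=0$ to a single linear equation, solved by a unique $z=z_{\star}$. Substituting $z_{\star}$ into $g_1^2=0$ then leaves one equation for $r^2$; after clearing denominators and using $\beta_0^2=(1-dw^2)/d^3$, I expect the elimination to collapse to the clean closed form
\[
r_{\star}^2=\frac{d^4w^4\,l_0}{(d-1)^2\,(dw^2-1)},
\]
with $l_0$ as in \eqref{lazem}. The Family A hypotheses impose $d>0$ and $d(1-dw^2)>0$, hence $dw^2-1<0$, while $d\neq 1$; therefore the right-hand side is positive exactly when $l_0<0$. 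Taking $r_{\star}=+\sqrt{r_{\star}^2}>0$ then yields an admissible zero $(r_{\star},z_{\star})\in\Omega$.

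It remains to verify that the zero is simple. The useful simplification is that the affine factor of $g_1^1$ vanishes at $(r_{\star},z_{\star})$, so $\partial_r g_1^1(r_{\star},z_{\star})=0$ and the Jacobian determinant reduces to $-\,\partial_z g_1^1\,\partial_r g_1^2$. Both partials are nonzero constant multiples of $r_{\star}$, and a short computation gives $\det Dg_1(r_{\star},z_{\star})=\pi^2(d-1)^2\beta_0^2\,r_{\star}^2/(d^2w^{10})>0$, so the determinant is nonzero. With a simple zero at hand, Theorem \ref{thm1} produces a unique $2\pi$-periodic solution of the transformed system whose base point tends to $(r_{\star},z_{\star})$ as $\varepsilon\to 0$.

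Finally, to transfer the result back to \eqref{FZ} and obtain $\phi(t,\varepsilon)\to(0,0,0)$, I would undo the changes of variables in reverse order: the cylindrical coordinates keep $(x_1,x_2,x_3)$ bounded near $(r_{\star}\cos\theta,r_{\star}\sin\theta,z_{\star})$, the rescaling $(X,Y,Z)=\varepsilon(x_1,x_2,x_3)$ forces $(X,Y,Z)=\mathcal{O}(\varepsilon)$, and the bounded linear map relating $(X,Y,Z)$ to $(x,y,z)$ shows the periodic orbit of \eqref{FZ} has amplitude $\mathcal{O}(\varepsilon)$, hence shrinks to the origin. I expect the main obstacle to be purely computational: carrying out the elimination of $z$ and then $r$ cleanly enough to recognize that the positivity of $r_{\star}^2$ is governed exactly by the sign of $l_0$, while tracking the reality constraint $dw^2<1$ that makes $\beta_0$ real and simultaneously fixes the sign of $dw^2-1$.
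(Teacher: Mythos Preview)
Your proposal is correct and follows essentially the same route as the paper: locate the nontrivial zero $(r_\star,z_\star)$ of the first averaged function $g_1$ from \eqref{AVa}, verify $\det Dg_1(r_\star,z_\star)=-\pi^2 l_0/(dw^6)\neq 0$, apply Theorem~\ref{thm1}, and then undo the changes of variables to see the orbit shrinks to the origin. Your presentation is in fact slightly cleaner than the paper's in two respects: you directly take $r_\star=+\sqrt{r_\star^2}$ rather than splitting into the cases $d>1$ and $d<1$ as the paper does with its $(r_\star^{\pm},z_\star)$, and you exploit the factorization $g_1^1=r\cdot A(z)$ to reduce the Jacobian determinant to the off-diagonal product, whereas the paper simply reports the final value.
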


\begin{proof}
The first averaged function \eqref{AVa} has two non-trivial solutions $(r^{\pm}_{\ast},z_{\ast})$
\begin{eqnarray*}
\left(r^{\pm}_{\ast}, z_{\ast}\right)
=\left(\pm\dfrac{ d^2 }{d-1} \sqrt{\dfrac{l_0}{w^2 d-1}}w^2
,\dfrac{ d w^2\left(d\beta_0\alpha_1+w^2 d\beta_1-w^2\gamma_1 \right)}{2 \beta_0 (d-1) (w^2d-1)} \right).
\end{eqnarray*}
For $d>1$ the solution $\left(r^{+}_{\ast}, z_{\ast}\right)$ only within the domain of $g_1$ and for $d<1$ the solution $\left(r^{-}_{\ast}, z_{\ast}\right)$ only contained in the domain of $g_1.$ Since,
\begin{eqnarray*}
\mbox{det}\left(D{g}_1\left(r^{\pm}_{\ast}, z_{\ast}\right)\right)=-\dfrac{\pi^2 l_0}{d w^6}\neq 0,
\end{eqnarray*}
and by using Theorem \ref{thm1} the Proposition is proved.

Assume that assumptions Proposition \ref{propEcaseA} hold for system \eqref{FZ}, then going back through the change of variables 
\begin{eqnarray*}
(x_1,x_2,x_3)=(r \cos{(\theta)},r \sin{(\theta)},z), \,\,
(X,Y,Z)=\varepsilon (x_1,x_2,x_3), 
\end{eqnarray*}
and
\begin{eqnarray*}
(x,y,z)=\left(\frac{Y+2 Z(d w^2-1)}{2d w^2}, 
-\frac{d^2\beta_0 w X+(d w^2-1) (Y-2 Z)}{2 d^2 w^2}
,\frac{1}{2 d w} X\right),
\end{eqnarray*}
This system has the limit cycle:
\begin{align*}
x(t,\varepsilon)&=\left(\frac{r\sin(w t)}{2dw^2}+\frac{z (dw^2-1)}{dw^2}\right) \varepsilon +\mathcal{O}(\varepsilon^2),\\\nonumber
y(t,\varepsilon)&=\left( -\frac{r \beta_0 \cos(w t)}{2w}-\frac{r(dw^2-1)\sin(w t)}{2d^2w^2}+\dfrac{z(dw^2-1)}{d^2w^2} \right) \varepsilon +\mathcal{O}(\varepsilon^2),\\\nonumber
z(t,\varepsilon)&=\frac{r\cos(w t)}{2dw} \varepsilon +\mathcal{O}(\varepsilon^2).
\end{align*}
\end{proof}
\subsection{Stability of Periodic Solution}
Here, the periodic solution provided by the Propositions \ref{propEcaseA} will be analysed in view of stability.
\begin{proposition}\label{stabilityA}
Suppose $l_0 <0.$ The periodic solution generated by Proposition \ref{propEcaseA} is attracting (resp. repelling) if $ \beta_1 d-\gamma_1>0$ (resp. $<0)$.
\end{proposition}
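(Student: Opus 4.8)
The plan is to apply the hyperbolic stability criterion of Theorem \ref{vehulst}. Since the first averaged function $g_1=(g_1^1,g_1^2)$ is the one that does not vanish (so $m=1$ in the notation of Theorem \ref{thm1}), the stability of the periodic solution $\phi(t,\varepsilon)$ is governed by the real parts of the eigenvalues of the $2\times 2$ matrix $Dg_1(r_{\ast}^{\pm},z_{\ast})$. Because this is a planar Jacobian, its eigenvalues are completely determined by its trace and determinant, so I only need to evaluate these two scalars at the critical point.

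First I would exploit the structure of $g_1^1$ in \eqref{AVa}: it factors as $g_1^1(r,z)=r\,A(z)$ for an affine function $A$. Since $(r_\ast^{\pm},z_\ast)$ is a zero of $g_1$ with $r_\ast^{\pm}\neq 0$, we must have $A(z_\ast)=0$; consequently $\partial_r g_1^1(r_\ast^{\pm},z_\ast)=A(z_\ast)=0$, so the $(1,1)$ entry of $Dg_1$ vanishes and the trace collapses to $\partial_z g_1^2(r_\ast^{\pm},z_\ast)$. I would then substitute the explicit value of $z_\ast$ from the proof of Proposition \ref{propEcaseA} into $\partial_z g_1^2$ and simplify; the terms carrying $\alpha_1$ cancel, leaving
\[
\mathrm{tr}\,Dg_1(r_\ast^{\pm},z_\ast)=-\frac{2\pi(d\beta_1-\gamma_1)}{w}.
\]

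For the determinant I would reuse the computation already carried out in the proof of Proposition \ref{propEcaseA}, namely $\det Dg_1(r_\ast^{\pm},z_\ast)=-\pi^2 l_0/(dw^6)$. Under the standing hypotheses $d>0$ and $l_0<0$, this gives $\det Dg_1>0$. A positive determinant forces the two eigenvalues to share the sign of their common real part (they are either real of the same sign or a genuinely complex-conjugate pair), and that sign is fixed by the trace; in particular $d\beta_1-\gamma_1\neq 0$ makes $Dg_1$ hyperbolic, so Theorem \ref{vehulst} applies. Since $w>0$, the trace is negative exactly when $\beta_1 d-\gamma_1>0$ and positive exactly when $\beta_1 d-\gamma_1<0$, which yields the asserted attracting/repelling dichotomy.

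The only genuinely computational step, and the main obstacle, is the simplification of $\partial_z g_1^2$ at $z_\ast$, where the lengthy expression must be shown to reduce to the clean trace formula above. The conceptual content is the observation that the factorization $g_1^1=r\,A(z)$ annihilates the $(1,1)$ entry, which makes the trace immediate and decouples the stability type entirely from $r_\ast^{\pm}$ and from the choice of sign.
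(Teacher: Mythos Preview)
Your proposal is correct and follows essentially the same route as the paper: both compute the characteristic polynomial of $Dg_1$ at $(r_\ast^\pm,z_\ast)$ (your trace--determinant pair is exactly the pair of coefficients the paper feeds into the Routh--Hurwitz criterion) and read off stability from the sign of the linear coefficient together with the positivity of the constant term. Your structural remark that $g_1^1=r\,A(z)$ forces the $(1,1)$ entry of $Dg_1$ to vanish is a nice addition that explains \emph{why} the trace reduces so cleanly and depends only on $\partial_z g_1^2$; the paper simply records the resulting polynomial without this observation.
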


\begin{proof}
For $l_0 <0$ and $d>1,$ $(r^{+}_{\ast},z_{\ast})$  is contained in the domain $\mathbb{R}^{+}\times \mathbb{R}$ while for $l_0 <0$ and $d<1,$ $(r^{-}_{\ast},z_{\ast})$  is held in the domain.

The characteristic polynomial for Jacobian matrix of $g_1(\mathsf{z})$  at $(r^{\pm}_{\ast},z_{\ast})$ given by
\begin{eqnarray*}
p(\lambda)=\lambda^2+\dfrac{2\pi (\beta_1 d-\gamma_1)}{w}\lambda-\dfrac{ \pi^2 l_0}{d w^6}.
\end{eqnarray*}
Since $-\dfrac{ \pi^2 l_0}{d w^6}>0,$ then according to Routh-Hurwitz criteria, the following results are derived. When 
$( \beta_1 d-\gamma_1)>0$ then the roots of $p(\lambda)$ are on the left hand side of the complex plane. Besides for $(\gamma_1-\beta_1 d)>0,$ the polynomial $p(\lambda)$ has two roots with a positive real part. Thus, the periodic solution is repelling.
\end{proof}

\subsection{Invariant torus bifurcation}
Next, we utilize the method introduced in \cite{NovaesJ} and establish general conditions on the averaging functions that guarantee the emergence of an invariant torus from the periodic solution outlined in Proposition \ref{propEcaseA}.

Define
\begin{eqnarray}\label{a1221}
\varpi:=\frac {\sqrt2\pi \left(2{\alpha_1}\left(d-1\right) ^2 \left(2dw^2-3\right)
{\beta_1}+dw^2\left(d^2{{\alpha_1}}^2-2 d^2{\alpha_2}
+4{\alpha_2}d-2{\alpha_2} \right) {\beta_0} \right) }{2w^{5}\left(d-1\right)^2}. 
\end{eqnarray}
\begin{proposition}\label{Torus}
If the assumptions of Proposition \ref{propEcaseA} are met by the FitzHugh-Nagumo system \eqref{FZ} and $l_1 \neq 0$ as defined in \eqref{lazem}, then a smooth curve $\gamma(\varepsilon)=\gamma^{\ast}+\varepsilon \gamma1+\mathcal{O}(\varepsilon^2)$ exists where $\gamma^{\ast}=\beta_1 d,$ and interval $J_{\varepsilon}$ is contained in $\gamma(\varepsilon),$ so as the only one invariant torus bifurcates from the periodic solution 
$\phi(t,\gamma(\varepsilon),\varepsilon)$ when $\gamma_1$ crosses $\gamma(\varepsilon).$
If $\gamma_1 \in J_{\varepsilon}$ and $ \gamma_1-\gamma(\varepsilon) <0,$ such this torus exists while surrounds the periodic solution. Moreover, since $l_{12}>0,$ the torus is repelling while the periodic solution has the opposite stability.
\end{proposition}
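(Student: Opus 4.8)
The plan is to apply the torus bifurcation criterion of Theorem~\ref{thmtorus} with averaging index $l=1$ and bifurcation parameter $\mu=\gamma_1$, taking $g_l=g_1$ to be the first-order averaged function \eqref{AVa} whose equilibrium $(r^{\pm}_{\ast},z_{\ast})$ was located in Proposition~\ref{propEcaseA}. First I would verify hypothesis \textbf{C1}. Since $(r^{\pm}_{\ast},z_{\ast})$ depends smoothly on $\gamma_1$, it provides the required curve $\mathbf{x}_{\gamma_1}=(r^{\pm}_{\ast},z_{\ast})$ of zeros of $g_1$. From the proof of Proposition~\ref{stabilityA}, the characteristic polynomial of $Dg_1$ along this curve is
\[
p(\lambda)=\lambda^2+\frac{2\pi(\beta_1 d-\gamma_1)}{w}\lambda-\frac{\pi^2 l_0}{d w^6},
\]
so its eigenvalues are $\alpha(\gamma_1)\pm i\beta(\gamma_1)$ with $\alpha(\gamma_1)=-\pi(\beta_1 d-\gamma_1)/w$. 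Setting $\gamma_1=\gamma^{\ast}:=\beta_1 d$ makes $\alpha$ vanish, and because $l_0<0$ and $d>0$ the constant term $-\pi^2 l_0/(d w^6)$ is positive; hence the eigenvalues become a genuine purely imaginary pair $\pm i w_0$ with $w_0=(\pi/w^3)\sqrt{-l_0/d}>0$. This is exactly \textbf{C1} at $\mu_0=\gamma^{\ast}$.

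Next, differentiating gives $d\alpha/d\gamma_1=\pi/w=:d_0\neq 0$, which is the transversality hypothesis \textbf{C2} (note $d_0>0$). I would then apply the linear change of coordinates diagonalizing the imaginary part so that $D_{\mathbf{y}}H_{\varepsilon}(0,0)=\mathrm{Id}+\varepsilon\,\mathsf{A}_{\varepsilon}+\mathcal{O}(\varepsilon^{2})$ is in the real Jordan form required by \textbf{B3}, with leading block fixing $e^{i\theta_{\varepsilon}}=1+i\varepsilon w_0+\mathcal{O}(\varepsilon^2)$; the $\mathcal{O}(\varepsilon^2)$ correction here is where the second-order data $\varpi$ of \eqref{a1221} enters.

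The crux of the argument, and the step I expect to be the main obstacle, is the evaluation of the Lyapunov coefficient $l_1^{\varepsilon}$ from \eqref{FormulaLyapunov}. This requires assembling the shifted Poincaré map $H_{\varepsilon}$ from the averaged functions $g_1$ and $g_2$ (already displayed as long expressions), extracting its quadratic and cubic multilinear parts $B_{\varepsilon}$ and $C_{\varepsilon}$ via \eqref{BC}, and inserting $\mathsf{p}=(1/2,-i/\sqrt2)$ with the pairing $\langle\cdot,\cdot\rangle$. Since $g_1$ is quadratic, $C_{\varepsilon}$ starts at order $\varepsilon^2$ (its cubic content comes from $g_2$), while $B_{\varepsilon}=\varepsilon B^{(1)}+\mathcal{O}(\varepsilon^2)$; after expanding $l_1^{\varepsilon}$ in $\varepsilon$ I expect the order-$\varepsilon$ coefficient $l_{1,1}$ to vanish (the leading contribution of the product-of-$B_{\varepsilon}$ term has vanishing real part), leaving the order-two coefficient $l_{1,2}$ as the first nonvanishing one, so $j^{\ast}=2$. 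A direct symbolic computation should then express $l_{1,2}$ through the quantity $l_1$ of \eqref{lazem}, so that $l_1\neq 0$ is equivalent to $l_{1,2}\neq 0$ and fixes $j^{\ast}=2$; the sign of $l_{1,2}$ then governs the stability, and in the case $l_{1,2}>0$ treated here the torus will be repelling. Controlling the many derivative terms of \eqref{FormulaLyapunov} and pinning down the sign of $l_{1,2}$ is the delicate part.

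Finally, with \textbf{C1}, \textbf{C2}, \textbf{B3} verified, $j^{\ast}=2$, and $l_{1,2}>0$, I would invoke Theorem~\ref{thmtorus}. The smooth critical curve is the function $\mu(\varepsilon)=\gamma(\varepsilon)=\gamma^{\ast}+\mathcal{O}(\varepsilon)$ produced there. Because $d_0=\pi/w>0$ and $l_{1,2}>0$, part (ii) of that theorem yields a unique invariant torus in a neighborhood $U_{\varepsilon}$ of the periodic orbit exactly when $l_{1,2}(\gamma_1-\gamma(\varepsilon))d_0<0$, i.e. when $\gamma_1-\gamma(\varepsilon)<0$; this torus is repelling (as $l_{1,2}>0$) and the periodic solution carries the opposite stability, which is precisely the asserted conclusion.
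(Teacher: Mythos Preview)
Your proposal is correct and follows essentially the same route as the paper: verify \textbf{C1}--\textbf{C2} from the characteristic polynomial of $Dg_1$ along $(r^{\pm}_{\ast},z_{\ast})$ at $\gamma^{\ast}=\beta_1 d$, bring the linearization to real Jordan form for \textbf{B3}, compute $B_\varepsilon,C_\varepsilon$ from $g_1,g_2$, obtain $l_{1,1}=0$ and $l_{1,2}=\dfrac{\pi}{128 d^5 w^{10}}\,l_1$, and then invoke Theorem~\ref{thmtorus} with $d_0=\pi/w>0$ and $l_{1,2}>0$ to conclude. The only addition in the paper is the explicit exhibition of the transformation matrix $\mathsf{V}$, the expressions for $B_\varepsilon,C_\varepsilon$, and the proportionality constant between $l_{1,2}$ and $l_1$, all of which you correctly anticipated would emerge from a direct symbolic computation.
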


\begin{proof}
Assume $\gamma^{\ast}=\beta_1 d$ and $p(\lambda)$ denotes the characteristic polynomial of the Jacobian matrix $\dfrac{\partial g_1}{\partial \mathsf{z}}(\mathsf{z}_{\gamma_1},\gamma_1).$ The roots of $p(\lambda)$ is given by $\lambda(\gamma_1)=\alpha(\gamma_1)\pm i \beta(\gamma_1),$ where
\begin{eqnarray}\label{H1H22}
\alpha(\gamma^{\ast})=0,\,\, 
\beta(\gamma^{\ast})=\dfrac{\sqrt{2}\pi |\alpha_1| \sqrt{d (1-w^2 d)}}{d w^3}=w_0, \,\, d_0=\dfrac{d\alpha(\gamma_1)}{d\gamma_1}\bigg|_{\gamma_1=\gamma^{\ast}}=\dfrac{\pi}{w}>0.
\end{eqnarray}
So, the equations \eqref{H1H22} verify hypotheses $\textbf{C1}$ and $\textbf{C2}$.

By using the implicit function theorem and hypotheses $\textbf{C1}$ and $\textbf{C2}$, we can conclude that there exist functions $\mathsf{z}_1$ and $\gamma1$ such that
\begin{eqnarray*}
\zeta(\gamma_1,\varepsilon)=\mathsf{z}_{\gamma_1} 
+\varepsilon \mathsf{z}_1+\mathcal{O}(\varepsilon^2) \,\, \mbox{and} \,\, \mu(\varepsilon)=\gamma^{\ast}
+\varepsilon \gamma1+\mathcal{O}(\varepsilon^2).
\end{eqnarray*}
We ignore these functions here due to cumbersome expressions.

In this step, we apply the change of variable 
$\mathsf{z}=\mathsf{x}+\zeta(\mu,\varepsilon),$ the change of parameter $\gamma_1=\sigma+\beta_1 d,$ and the linear change of variable 
\begin{eqnarray*}
{y}\mathsf=\mathsf{V} \mathsf{x}=\begin{pmatrix}
\frac{1}{2} & v_{12} \\ 
0 & v_{22}
\end{pmatrix} \mathsf{x},
\end{eqnarray*}
where
\begin{eqnarray*}
 v_{12}&=&\left(\frac{\beta_1 d}{2w}+\frac{\sqrt {2}\beta_0
 \pi d\alpha_1}{4w^{3}}
-\frac{\sqrt {2} \alpha_1 \left(5d^{2}+3d^{2}w^{2}
-10d+5\right) }{16w^{2} \left(d-1\right) ^{2}}+\frac{\sqrt {2}\alpha_1\pi  \left( w^{2}d-1 \right)}{4\beta_0 w^{3}d^{2}}\right)\varepsilon,\\\nonumber
 v_{22}&=&\frac {\sqrt {2} \left( 3\sqrt {2}dw\beta_1+10\alpha_1 \right)  }{24\beta_0{w}^{3}{d}^{2}}\varepsilon
+ \dfrac{1}{4 (1-d{w}^{2} )},
\end{eqnarray*}
to the Poincar\'e map
\begin{eqnarray*}
\Pi(\mathsf{z},\mu,\varepsilon)=\mathsf{z}+\varepsilon g_1(\mathsf{z},\mu)+\varepsilon^2 g_2(\mathsf{z},\mu)+\mathcal{O}(\varepsilon^{3}),
\end{eqnarray*}
to obtain
\begin{eqnarray*}
 H_{\varepsilon}(\mathbf{y},\sigma)=\mathsf{V}^{-1} \Pi\left( \mathsf{V}\mathbf{y}+\zeta(\gamma_1+\mu(\varepsilon),\varepsilon),\gamma_1+\mu(\varepsilon),\varepsilon\right).
\end{eqnarray*}
Now, the Taylor expansion of 
\begin{eqnarray*}
 D_{\mathbf{y}} H_{\varepsilon}(0,0)=Id+\varepsilon\mathsf{A}_{1}+\varepsilon^2 \mathsf{A}_{2},
\end{eqnarray*}
in real Jordan form is as
\begin{align*}
\begin{pmatrix}
1 & 0 \\ 
0 & 1
\end{pmatrix} +
\varepsilon
\begin{pmatrix}
0 & -\dfrac{\sqrt{2}\pi\beta_0\alpha_1 d}{w^3} \\ 
\dfrac{\sqrt{2}\pi\beta_0\alpha_1 d}{w^3} & 0
\end{pmatrix} +\varepsilon^2
\begin{pmatrix}
-\dfrac{\left(\pi{\alpha_1}{\beta_0} d\right)^2}{w^6} & \varpi\\ 
-\varpi & -\dfrac{\left(\pi{\alpha_1}{\beta_0} d\right)^2}{w^6}
\end{pmatrix},
\end{align*}
where $\varpi$ is given by \eqref{a1221}.
Therefore, we obtain the map
\begin{equation*}
\mathsf{y}\mapsto H_{\varepsilon}(\mathsf{y},\sigma)=\left( H^1_{\varepsilon}(\mathsf{y},\sigma),H^2_{\varepsilon}(\mathsf{y},\sigma) \right).
\end{equation*}
In the last step, we should calculate the Lyapunov coefficient \eqref{FormulaLyapunov}.
So, we need to compute the multilinear functions by using formulas \eqref{BC} 
as
\begin{align*}
B_{\varepsilon}(u,v)
=&\scalebox{0.97}{$\left( \dfrac{\pi \beta_0 (1-d) (u_2 v_1+u_1 v_2)}{2 w^5 d }\varepsilon,
\dfrac{\pi\beta_0(d-1) ( 2u_2 v_2+u_1 v_1)}{2 w^5 d}\varepsilon
\right)+\mathcal{O}(\varepsilon^2),$}\\\nonumber
C_{\varepsilon}(u,v,w)
=&\Bigg(\dfrac{\epsilon^2\pi}{96 d^5 w^{10}}\bigg( 4w^2\left(  \left( 9v_1w_1\pi -10v_1w_2-10v_2w_1\right) u_1-2u_2 
\left(5v_1w_1-27v_2w_2 \right)\right) d^{3}\\\nonumber
&-4 \left(2w^2+1\right) 
\left(\left( 9v_1w_1\pi -10v_1w_2-10v_2w_1 \right) u_1-2u_2
\left(5v_1w_1-27v_2w_2 \right)\right) d^2\\\nonumber
&+4 \left(w^2+2\right) 
\left(\left( 9v_1w_1\pi -10v_1w_2-10v_2w_1\right) u_1-2u_2
\left(5v_1w_1-27v_2w_2 \right)\right) d\\\nonumber
&+\left(40v_2w_1-36v_1w_1\pi +40v_1w_2\right) u_1+8u_2 
\left(5v_1w_1-27v_2w_2 \right)\\\nonumber
&-9d^2w 
\big(\left(w_1 \left(5+3w^2d^2-10d+5d^2\right) v_1
+4v_2w_2 \left(w^2d^2+3d^2-6d+3\right)\right) u_1\\\nonumber
&+4u_2w_2 \left(w^2d^2+3d^2-6d+3 \right) v_1
+4u_2v_2w_1 \left(w^2d^2+3d^2-6d+3\right)\big) {\beta_0}\bigg),\\\nonumber
&-\dfrac{\pi \epsilon^2}{8 d^5 w^{10}}\bigg(2w^2 \left(\left( 5v_1w_1-3v_2w_2 \right) 
u_1+3u_2 \left( 2v_2w_2\pi -v_2w_1-v_1w_2 \right)\right) d^{3}\\\nonumber
&-2 \left( 1+2w^2 \right) \left(  \left( 5v_1w_1-3v_2w_2 \right) u_1
+3u_2 \left( 2v_2w_2\pi -v_2w_1-v_1w_2\right)  \right) d^2\\\nonumber
&+2 \left( w^2+2 \right)  \left( \left( 5v_1w_1-3v_2w_2 \right) u_1
+3u_2\left( 2v_2w_2\pi -v_2w_1-v_1w_2 \right) \right) d\\\nonumber
&+6u_2v_1w_2
+ \left( 6v_2w_2-10v_1w_1 \right) u_1-6u_2v_2 \left( 2w_2\pi 
-w_{{1}} \right)\\\nonumber
&-d^2w \bigg(\big(\left( 3w^2v_1w_2
+5v_2w_1+3w^2v_2w_1+5v_1w_2 \right) u_1+ \left( 5u_2w_1+3w^2u_2w_1 \right) v_1\\\nonumber
&+6w^2u_2v_2w_2+18u_2v_2w_2\big) 
d^2+\left(-\left( 10v_1w_2+10v_2w_{{1}} \right) u_1-2u_2\left( 18v_2w_2+5v_1w_{{1}} \right)\right) d\\\nonumber
&+\left( 5v_1w_2+5v_2w_1\right)
u_1+u_2 \left( 18v_2w_2+5v_1w_1\right)  \bigg) { \beta_0}
\bigg)\Bigg)+\mathcal{O}(\varepsilon^3).
\end{align*}
Now, by expanding $l^1_{\varepsilon}$ around $\varepsilon=0,$ we have
\begin{equation*}
l_1^{\varepsilon}=
\varepsilon l_{1,1}+\varepsilon^2 l_{1,2}+\mathcal{O}(\varepsilon^3),
\end{equation*}
where 
\[
l_{1,1}=0, \, \mbox{ and } \, l_{1,2}=
\dfrac{\beta_0 (15 w^2 d^2+41 d^2-82 d+41)\pi}{128 d^3 w^9}-\dfrac{3\pi^2 (d-1)^2 (w^2 d-1)}{64 d^5 w^{10}}=\dfrac{\pi}{128 d^5 w^{10}}l_1,\,\,
\]
where $l_1$ is given by \eqref{lazem}. Then,
The existence and stability of this torus are determined by using the Theorem \ref{thmtorus}.
\end{proof}

\section{Proof of Theorem \ref{teoB}}
Since for  $(a,b,c)=\left(-w^2,0,0\right)$ where $w>0,$ the origin exhibits a Hopf-zero singularity in the FitzHugh-Nagumo system \eqref{FZ}. Hence, we are allowed to write it in the standard form
\eqref{Standard}.

Let $(a,b,c)=\left(-w^2+\sum_{i=1}^5 \varepsilon^i \alpha_i ,\sum_{i=1}^5 \varepsilon^i \beta_i,\sum_{i=1}^5 \varepsilon^i \gamma_i \right)+\mathcal{O}(\varepsilon^6).$ Then, we have
\begin{eqnarray*}
\dot{x}&=&z,\\\nonumber
\dot{y}&=&\beta_5 (x-d y)\varepsilon^5+\beta_4 (x-d y)\varepsilon^4+\beta_3(x-dy)\varepsilon^3+\beta_2(x-dy)\varepsilon^2+\beta_1(x-dy)\varepsilon+\mathcal{O}(\varepsilon^6),\\\nonumber
\dot{z}&=&\left(x(1-x)\alpha_5+\gamma_5 z\right)\varepsilon^5
+(x(1-x)\alpha_4+\gamma_4 z)\varepsilon^4+(\alpha_3 x-\alpha_3 x^2+\gamma_3 z)\varepsilon^3\\\nonumber
&&+(\alpha_2 x+\gamma_2 z-\alpha_2 x^2)\varepsilon^2+(\alpha_1 x+\gamma_1 z-\alpha_1x^2)\varepsilon+x^3+(w^2-1) x^2-w^2 x+y+\mathcal{O}(\varepsilon^6).
\end{eqnarray*}

By using the linear change of variables
\begin{equation*}
(x,y,z)=\left(\dfrac{2Z+w^2 Y}{2w^2}, Z,\dfrac{w}{2} X\right), \,\, \mbox{ and } \,\, (X,Y,Z)=\varepsilon (x_1,x_2,x_3),
\end{equation*}
we get 
\begin{align*}
\dot{x_1}=&-wx_2+ \left({\gamma_1}x_1+\frac { \left(2x_3+w^2x_2\right) \left(2\alpha_1 w^2+2w^2x_3+w^4x_2-2x_3-w^2 x_2\right)}{2w^5} \right) \varepsilon\\\nonumber
&+\left({\gamma_2}x_1-\frac { \left( 2x_3+x_2{w}^{2} \right)^2\alpha_1}{2w^5}+
\frac{\left(2x_3+x_2{w}^{2} \right)\left(4\alpha_2w^4+4{x_3}^2+4x_3x_2w^2+{x_2}^2w^4\right)}{4w^7}
 \right)\varepsilon^2\\\nonumber
&+\left(\gamma_3x_1-\frac{\left(2x_3+x_2w^2\right) \left(
\alpha_2w^2 x_2-2{ \alpha_3}w^2 +2\alpha_2x_3\right)}{2w^5}\right) \varepsilon^3\\\nonumber
&+\left(\gamma_4 x_1-\dfrac{(x_2 w^2+2 x_3)(\alpha_3 w^2 x_2-2\alpha_4 w^2+2 \alpha_3 x_3)}{w^5}\right)\varepsilon^4
\\\nonumber
&+\left(\gamma_5 x_1-\dfrac{(x_2 w^2+2 x_3)(\alpha_4 w^2 x_2-2\alpha_5 w^2+2 \alpha_4 x_3)}{w^5}\right)\varepsilon^5
+\mathcal{O}(\varepsilon^6),\\\nonumber
\dot{x}_2=&wx_1
+\dfrac{\varepsilon\left(2d w^2 x_3- w^2x_2-2 x_3\right) \left(\beta_5
\varepsilon^4+\beta_4\varepsilon^3+\beta_3\varepsilon^2+\beta_2\varepsilon+\beta_1\right)}{w^4}+\mathcal{O}(\varepsilon^6)\\\nonumber
\dot{x}_3=&\left(\frac{\beta_1 x_3}{w^2}+\frac{\beta_1 x_2}{2}-\beta_1dx_3 \right) \varepsilon
+\left(\frac{\beta_2x_3}{w^2}+\frac{\beta_2 x_2}{2}-\beta_2dx_3 \right)\varepsilon^2+ 
\left(\frac {{ \beta_3}x_3}{w^2}+\frac{\beta_3x_2}{2}-{ \beta_3}dx_3\right)\varepsilon^3\\\nonumber
&+\left(\dfrac{\beta_4 x_3}{w^2}+\dfrac{\beta_4 x_2}{2}-\beta_4 d x_3\right)\varepsilon^4
+\left(\dfrac{\beta_5 x_3}{w^2}+\dfrac{\beta_5 x_2}{2}-\beta_5 d x_3\right)\varepsilon^5+\mathcal{O}(\varepsilon^6).
\end{align*}
Now, by implementation of cylindrical coordinate 
\begin{eqnarray*}
(x_1,x_2,x_3)=(r \cos{(\theta)},r \sin{(\theta)},z), \,\, \mbox{ where } \,\, \dot{\theta}=w+\mathcal{O}(\varepsilon)
\end{eqnarray*}
and doing a time rescaling, the FitzHugh-Nagumo system \eqref{FZ} turns into 
\begin{eqnarray*}\label{cylindrical}
\dfrac{dr}{d\theta}&=&
\Bigg(\frac{1}{2}\cos(\theta)(1-\cos^2(\theta))r^2+
\dfrac{r\cos(\theta) \left(r\cos^2(\theta)+2\alpha_1\sin(\theta)+4\sin(\theta)z-r\right)}{2w^2}\\\nonumber
&&+\dfrac{\cos^2(\theta)\gamma_1}{w}r+\dfrac{\beta_1 \left(2 d\sin(\theta) z+r\cos^2(\theta)-r\right)}{w^3}+
\dfrac{2\cos(\theta)z \left(\alpha_1-r\sin(\theta)+z\right)}{w^4}\\\nonumber
&&-\dfrac{2 z\left(\beta_1 w \sin(\theta)+z\cos(\theta)\right)}{w^6}\Bigg)\varepsilon
+\mathcal{O}(\varepsilon^6),\\\nonumber
&=&\varepsilon F^1_1(\theta,r,z)+\sum_{j=2}^5 \varepsilon^i F^1_j(\theta,r,z)+\mathcal{O}(\varepsilon^6),\\\nonumber
\frac{dz}{d\theta}&=&
\dfrac{\beta_1}{w}\left(\frac{1}{w^2}z+\frac{1}{2}\sin(\theta) r- d z \right)\varepsilon
+\mathcal{O}(\varepsilon^2),\\\nonumber
&=&\varepsilon F^2_1(\theta,r,z)+\sum_{j=2}^5 \varepsilon^i F^2_j(\theta,r,z)+\mathcal{O}(\varepsilon^6).
\end{eqnarray*}
Since the expression of $F^i_j(\theta,r,z)$ for $i=1,2$ and $j=2,3,4,5$ are too long, we skip them here.
Now, $g_1(r,z)$ is provided by
\begin{eqnarray}\label{AV1}
g^1_1(r,z)=\frac{r\pi\left(\gamma_1w^2-\beta_1\right)}{w^3}, \,\, 
g^2_1(r,z)=\frac{2\beta_1 z\pi (1-d w^2)}{w^3}.
\end{eqnarray}
\subsection{Existence of periodic solution}\label{B1}
Now, we investigate the conditions for the presence of a periodic solution $\phi(t,x)$ in the FitzHugh-Nagumo system \eqref{FZ} for the Family B.

\begin{proposition}\label{propEcaseB1}
Let $(a,b,c)=\left(-w^2+\alpha(\varepsilon),\beta(\varepsilon),\gamma(\varepsilon)\right)$ where
$\alpha(\varepsilon),$ $\beta(\varepsilon),$ and $\gamma(\varepsilon)$ are given by \eqref{alphabetagamma}. The followings hold.
\begin{itemize}
\item[$\mathsf{B1}$.]
Let
$d=\dfrac{1}{w^2},$ $\beta_1=\gamma_1 w^2,$ 
and $k_0>0,$ which is given by \eqref{valuesB}.
For the $|\varepsilon| \neq 0$ enough small, the FitzHugh-Nagumo system \eqref{FZ} has the periodic solution $\phi(t,\varepsilon)$ given by 
\begin{eqnarray}\label{cycle1}
x(t,\varepsilon)&=&\varepsilon \left(\frac{z}{w^2}+\frac{r \sin(w t)}{2}\right)+\mathcal{O}(\varepsilon^2),\\\nonumber
y(t,\varepsilon)&=&\varepsilon z+\mathcal{O}(\varepsilon^2),\\\nonumber
z(t,\varepsilon)&=&\varepsilon \frac{w r\cos(w t)}{2}+\mathcal{O}(\varepsilon^2),
\end{eqnarray}
such that $\phi(t,\varepsilon)$ converges to $(0,0,0)$ as
$\varepsilon$ converges to $0.$
\item[$\mathsf{B2}$.]
Let $d\neq \dfrac{1}{w^2},$ $\beta_1=\gamma_1 w^2,$ $\beta_2=w^2 \gamma_2 -\alpha_1 \gamma_1$
and $\eta>0,$ which is given by \eqref{valuesB}.
For the sufficiently small $|\varepsilon| \neq 0$, the FitzHugh-Nagumo system \eqref{FZ} has a periodic solution $\phi(t,\varepsilon)$ such that $\phi(t,\varepsilon)$ approaches $(0,0,0)$ as
$\varepsilon$ converges to $0.$
\end{itemize}
\end{proposition}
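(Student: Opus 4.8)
The plan is to treat the two cases separately, exploiting the collapse of the first-order averaged function \eqref{AV1} produced by the hypothesis $\beta_1 = \gamma_1 w^2$. Under this hypothesis the first component $g^1_1(r,z) = r\pi(\gamma_1 w^2 - \beta_1)/w^3$ vanishes identically, while $g^2_1(r,z) = 2\gamma_1 z\pi(1 - dw^2)/w$. Which tool applies next depends on whether the surviving component also degenerates, and this is exactly what distinguishes $\mathsf{B1}$ from $\mathsf{B2}$.

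In case $\mathsf{B1}$, the extra hypothesis $d = 1/w^2$ forces $1 - dw^2 = 0$, so $g^2_1 \equiv 0$ as well; hence $g_1 \equiv 0$ and the first index with a nonvanishing averaged function is $m = 2$. I would compute the second-order averaged function $g_2 = (g^1_2, g^2_2)$ from the formula for $y_2$ in Subsection 2.1, specialized to $d = 1/w^2$ and $\beta_1 = \gamma_1 w^2$, and then exhibit an explicit zero $(r_\star, z_\star)$ with $r_\star > 0$. Since $k_0 = (\gamma_1\alpha_1)^2 - (w^2\gamma_2 - \beta_2)^2$ is a difference of squares, I expect $r_\star^2$ to emerge proportional to a quantity whose positivity is exactly the condition $k_0 > 0$, which is the natural requirement for an admissible real radius. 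I would then verify $\det Dg_2(r_\star, z_\star) \neq 0$---anticipating, by analogy with the determinant $-\pi^2 l_0/(dw^6)$ in Proposition \ref{propEcaseA}, a nonzero multiple of $k_0$---and invoke Theorem \ref{thm1} with $m = 2$ to produce the unique $T$-periodic solution. Reverting the cylindrical and linear changes of variables introduced for Family B then gives the explicit expression \eqref{cycle1} and the limit $\phi(t,\varepsilon) \to (0,0,0)$.

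In case $\mathsf{B2}$, with $d \neq 1/w^2$ and $\gamma_1 \neq 0$ the component $g^2_1$ does not vanish identically; its zero set is the continuum $\{z = 0\}$, so Theorem \ref{thm1} no longer applies and I would use the Lyapunov-Schmidt reduction of Subsection 2.2. Parametrizing the zero set as $\mathcal{Z} = \{(r,0) : r \in \overline{V}\}$ (so $u = r$, $\mathcal{B} \equiv 0$, $m = 1$, $n = 2$), a direct computation gives $Dg_1(r,0)$ with $\Lambda_r = \Gamma_r = B_r = 0$ and $\Delta_r = 2\gamma_1\pi(1 - dw^2)/w = \lambda_1 \neq 0$, the last inequality being precisely the nondegeneracy $\det(\Delta_u) \neq 0$ required by the reduction. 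Because $\Gamma_r = 0$, the first bifurcation function reduces to $f_1(r) = \pi g_2(r,0) = g^1_2(r,0)$, and I expect the hypothesis $\beta_2 = w^2\gamma_2 - \alpha_1\gamma_1$ to be exactly the condition forcing $f_1 \equiv 0$. One then passes to the second bifurcation function $f_2(r)$, assembled from $g_2, g_3$ and the corrections $\gamma_1(r), \gamma_2(r)$ via the formulas of Subsection 2.2; the third-order data it depends upon ($\gamma_3, \alpha_2, \beta_3$) are precisely the ingredients in the numerator of $\eta$. Solving $f_2(r) = 0$ should yield $r_\star^2$ proportional to $\eta$, so that $\eta > 0$ guarantees an admissible simple positive zero; checking $|Df_2(r_\star)| \neq 0$ and applying the Lyapunov-Schmidt existence theorem then furnishes the periodic solution, which tends to the origin after reverting the same coordinate changes.

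The main obstacle in both cases is computational rather than conceptual: the second- and third-order averaged functions $g_2, g_3$ for Family B, together with the resulting bifurcation function $f_2$, are very lengthy symbolic expressions, and the crux is to confirm that after imposing $\beta_1 = \gamma_1 w^2$ (and, in $\mathsf{B2}$, $\beta_2 = w^2\gamma_2 - \alpha_1\gamma_1$) the leading nonvanishing data collapse exactly to the compact quantities $k_0$ and $\eta$ of \eqref{valuesB}. Tracking which lower-order contributions survive into $f_2$ through the $\gamma_i(r)$ corrections is the delicate bookkeeping step; once the reduced scalar equations are in hand, the existence conclusions follow routinely from Theorem \ref{thm1} and the reduction theorem of Subsection 2.2.
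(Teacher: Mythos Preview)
Your proposal is correct and follows essentially the same route as the paper: for $\mathsf{B1}$ you observe $g_1\equiv 0$, compute $g_2$, find its simple zero with $\det Dg_2$ a nonzero multiple of $k_0$ (indeed $2\pi^2k_0/w^6$), and apply Theorem~\ref{thm1}; for $\mathsf{B2}$ you apply the Lyapunov--Schmidt reduction on the continuum $\mathcal{Z}=\{(r,0)\}$, note $\Gamma_r=0$ so $f_1(r)=\pi g_2(r,0)$ vanishes under $\beta_2=w^2\gamma_2-\alpha_1\gamma_1$, and then locate a simple positive zero of $f_2$ (the paper obtains $r^\ast=4w\sqrt{\eta}$). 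The only substantive work you defer is the symbolic computation of $g_2$, $g_3$, and $f_2$, exactly as in the paper.
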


\begin{proof}
The first-order averaged function \eqref{AV1} only has the trivial zero and 
due to the fact that $(0,0)$ is not within the domain, it is not consistent with a periodic solution.

Consider the conditions given by $\mathsf{B1}.$ Then, $g_1(r,z)=0$ and the second-order averaged function is given by
\begin{align}\label{g2B1}
g_2^1(r,z)&= 
\dfrac{r\pi (2\gamma_1 z-w^2\gamma_1\alpha_1-2 w^2\gamma_1 z-w^2\beta_2+w^4 \gamma_2)}{w^5}
,\\\nonumber
g_2^2(r,z)&=
\dfrac{\gamma_1\pi \left((w^2-1) (r^2 w^4+8 z^2)+8 w^2\alpha_1 z\right)}{4 w^5}.
\end{align}
The second-order averaged function has the non-trivial solution 
\begin{equation*}
\left(r^{+}_{\star}, z_{\star}\right)=\left( 
\dfrac{\sqrt{2}\sqrt{k_0}}{\left|(w^2-1)\gamma_1\right|}, \dfrac{w^2 (w^2\gamma_2-\gamma_1\alpha_1-\beta_2)}{2 \gamma_1 (w^2-1)}\right),
\end{equation*}
where $k_0$ is given by \eqref{valuesB}. 
This solution is real due to the positivity of $k_0.$
For $(w^2-1)\gamma_1>0$ the solution $(r^{+}_{\star},z_{\star})$ is in the domain of $g_2.$ Moreover, 
\begin{eqnarray*}
\mbox{det}\left(D{g}_2\left(r^{\pm}_{\star}, z_{\star}\right)\right)=\dfrac{2\pi^2 k_0}{ w^6}\neq 0,
\end{eqnarray*}
and by using Theorem \ref{thm1} the first statement of the proposition will be proved.

Note that going back through the change of variables
$(x_1,x_2,x_3)\mapsto(r \cos{(\theta)},r \sin{(\theta)},z)$, $(X,Y,Z)\mapsto\varepsilon (x_1,x_2,x_3)$ and $
(x,y,z)\mapsto \left(\dfrac{2Z+w^2 Y}{2w^2}, Z,\dfrac{1}{2}w X\right)$, under the assumptions of Proposition \ref{propEcaseB1}, the differential system \eqref{FZ}  has the limit cycle as \eqref{cycle1}.

Now, we prove the proposition under the condition $\mathsf{B2}$.
We utilize the approach presented in section \eqref{secLya}, combining Lyapunov-Schmidt reduction with averaging theory.
Firstly, let $\beta_1=\gamma_1 w^2$. Then, the first-order averaged function is transformed into
\[
g_1(r,z)=\left(0,\frac{2\gamma_1 z\pi(1-dw^2)}{w}\right).
\]

It is clear that function $g_1$ has a continuum zeros on the
$\mathcal{Z}=\{\mathsf{z}_r=\left(r, \beta(r)\right)=(r,0): r>0\},$ besides we have
\begin{eqnarray}\label{gam0}
D_{g_1}(\mathsf{z}_r)=\left[\begin{array}{cc} 
0&0\\ \noalign{\medskip}
0&\dfrac{2\gamma_1 \pi(1-dw^2)}{w}
\end {array} \right] 
=\left[\begin{array}{cc} 
\Lambda_r &\Gamma_r\\ \noalign{\medskip}
B_r &\Delta_r
\end {array} \right].
\end{eqnarray}
The first-order bifurcation function is given by
\begin{eqnarray*}
f_1(r)=\Gamma_r \zeta_1(r)+\pi g_2(\mathsf{z}_r)
\end{eqnarray*}
where 
\begin{eqnarray*}
\zeta_1(r)=-\Delta^{-1}_r\pi^{\perp} g_2(\mathsf{z}_r)=-\dfrac{r(r-r w^2-4\gamma_1 w+4\gamma_1 w^3 d)}{8(d w^2-1)}.
\end{eqnarray*}
So, $f_1(r)=\pi g_2(\mathsf{z}_r).$ Therefore, we need to calculate the second-order averaged function  which is given by
\begin{eqnarray*}
g_2(r,z)=\frac{1}{2}\int_0^{2\pi} 2F_2(\tau,\mathsf{z})+2\frac{\partial F_1}{\partial\mathsf{x}}(\tau,\mathsf{z})y_1(\tau,\mathsf{z}) d\tau,
\end{eqnarray*}
where $y_1(\tau,\mathsf{z})=g_1(\tau,\mathsf{z})=\int_0^{t} F_1(\tau,\mathsf{z}) d\tau.$ Then
\begin{align*}\label{AV2}
g_2^1(r,z)=&
\frac {\pi  \left( w^{2}-1 \right)  \left( \gamma_1w^{2}+
w^{2}\beta_1d-4\beta_1 \right) z r}{w^7}+\frac {2\pi \beta_1 \left( w^{2}d-1 \right) \left( 2 w^{2}\beta_1d
+\gamma_1w^{2}-3\beta_1 \right) z}{w^8}\\\nonumber
&+\frac {\pi  \left( w^{4} \left( 2\gamma_2w
+{\gamma_1}^{2}\pi  \right) + \left( w \left( \gamma_1\alpha_1
-2\beta_2 \right)  \right) ^{3}-w \left( 2\gamma_1\pi w
+3\alpha_1 \right) \beta_1+\pi{\beta_1}^{2}  \right) r}{2w^6}
,\\\nonumber
g_2^2(r,z)=&\frac{\pi\beta_1 \left(w^2-1 \right) {r}^2}{4w^3}-\frac{\pi\beta_1 \left(2w^2\beta_1d+\gamma_1w^2
-3\beta_1\right) r}{2w^{4}}-\frac{2\pi\beta_1 \left( w^2-1 \right) \left( w^2d-2\right){z}^2}{w^7}\\\nonumber
&+\frac {\pi \left( 2\pi {\beta_1}^2w^{4}{d}^2-2dw^{5}\beta_2+\left(2\beta_2 -d\beta_1\alpha_1\right) w^3-4\pi {\beta_1}^2w^2d+3\beta_1 w{\alpha_1}+2\pi {\beta_1}^2\right) z}{w^{6}}.
\end{align*}
Now, we  can calculate the first-order bifurcation function as
\begin{eqnarray*}
f_1(r)=\frac{r\pi\left(\gamma_2 w^2-\beta_2-\alpha_1 \gamma_1\right)}{w^3}.
\end{eqnarray*}

The function $f_1(r)$ does not have a positive simple zero.
Then, we should impose some constrictions on the parameters in order to vanish $f_1$, then compute the zeros for the second-order bifurcation function. Accordingly, we take $\beta_2=w^2\gamma_2-\alpha_1\gamma_1.$

The second-order bifurcation function given by
\begin{eqnarray*}
f_2(r)= \frac{1}{2}\Gamma_r \zeta_2(r)+\frac{1}{2}\frac{\partial^2 \pi g_1}{\partial b^2}(\mathsf{z}_r) \zeta_1^2(r)
+\frac{\partial \pi g_2}{\partial b}(\mathsf{z}_r) \zeta_1(r)+\pi g_3 (\mathsf{z}_r),
\end{eqnarray*}
where due to $\Gamma_r$ vanishes zero, the calculation of $\zeta_2(r)$ is not needed.

However, 
we have to obtain the third-order averaged function which is given by
\begin{eqnarray*}
g_3(r,z)=\int_0^{2\pi} F_3(\tau,\mathsf{z})+\frac{\partial F_2}{\partial\mathsf{x}}(\tau,\mathsf{z})y_1(\tau,\mathsf{z})+\dfrac{1}{2}\frac{\partial^2 F_1}{\partial\mathsf{x}^2}(\tau,\mathsf{z})y_1^2(\tau,\mathsf{z})+\dfrac{1}{2}\frac{\partial F_1}{\partial\mathsf{x}}(\tau,\mathsf{z})y_2(\tau,\mathsf{z}) d\tau,
\end{eqnarray*}
where for $\mathsf{x}=(r,z),$ $y_1=(y^1_1,y^1_2)$, and
\begin{eqnarray*}
\frac{\partial^2 F}{\partial\mathsf{x}^2}(\tau,\mathsf{z})=
\frac{\partial^2 F}{\partial r^2}(\tau,\mathsf{z}) {y_1^1}^2
+\frac{\partial^2 F}{\partial r \partial z}(\tau,\mathsf{z}) y^1_1 y^1_2
+\frac{\partial^2 F}{\partial z \partial r}(\tau,\mathsf{z})y^1_2 y^1_1
+\frac{\partial^2 F}{\partial z^2}(\tau,\mathsf{z}) {y^1_2}^2.
\end{eqnarray*}
Since the expression of $g_3^1(r,z)$ and $ g_3^2(r,z)$ are too long, we skip them here.
Finally,  the second-order bifurcation function is given by 
\begin{align*}
f_2(r)=&\dfrac{\pi r}{16 w^5 (1-d w^2)}\Big(\gamma_1(1-5 w^2+3 d w^2+w^4-3 d w^4+3 d w^6)r^2\\\nonumber
&\hspace*{2.5cm}+16 w^2(d w^2-1)(\beta_3+\alpha_2\gamma_1+\alpha_1\gamma_2-\gamma_3 w^2-{\gamma_1}^3 d^2 w^4+{\gamma_1}^3 d w^2)\Big),\\\nonumber
=&\dfrac{\pi r}{16 w^5 (1-d w^2)} (r^2-16w^2\eta)
\end{align*}
where $\eta$ is given by \eqref{valuesB}. So, for $\eta>0,$ 
the second bifurcation function, $f_2(r)$, has the unique zero
\begin{eqnarray*}
r^{\ast}=4w\sqrt{\eta},
\end{eqnarray*}
which is the simple and real root due to the positivity of $w$ and $\eta.$
Therefore, the proof is completed from Theorem \ref{thm1}.
\end{proof}
\subsection{Stability of periodic solution}\label{B2}
This part focuses on examining the stability of the periodic solution derived from Proposition \ref{propEcaseB1}.

\begin{proposition}\label{stabilityB1}
The periodic solution provided through Proposition \ref{propEcaseB1} under the conditions $\mathsf{B1}$ is attracting (resp. repelling) if $\beta_2<\gamma_2 w^2$ (resp. $\beta_2>\gamma_2 w^2$).
\end{proposition}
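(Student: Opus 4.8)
Under the hypotheses of \textbf{B1} the first averaged function vanishes identically and the periodic orbit $\phi(t,\varepsilon)$ comes from the simple zero $(r^+_\star,z_\star)$ of $g_2$, so that $m=2$ is the first nonvanishing index in Theorem \ref{thm1}. The plan is therefore to read off the stability of $\phi$ directly from the spectrum of $Dg_2(r^+_\star,z_\star)$ by invoking Theorem \ref{vehulst}: the orbit is attracting precisely when both eigenvalues have negative real part and repelling when one of them has positive real part. Since $Dg_2$ is a $2\times 2$ matrix, I would reduce this to a Routh--Hurwitz analysis of its characteristic polynomial $p(\lambda)=\lambda^2-(\operatorname{tr}Dg_2)\lambda+\det Dg_2$, so that only the signs of the trace and the determinant are needed.

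The determinant is already available from the proof of Proposition \ref{propEcaseB1}, where it was found that $\det Dg_2(r^+_\star,z_\star)=2\pi^2 k_0/w^6$, and this is strictly positive under the standing assumption $k_0>0$. With a positive determinant the two eigenvalues form either a complex conjugate pair or a pair of real numbers of equal sign; in both cases their real parts carry the sign of $\operatorname{tr}Dg_2$. Consequently the whole attracting/repelling dichotomy is governed by a single scalar, and the proof collapses to computing $\operatorname{tr}Dg_2(r^+_\star,z_\star)$.

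The key simplification is that at the fixed point the bracket appearing in $g_2^1$ of \eqref{g2B1} must vanish, since $r^+_\star\neq 0$ while $g_2^1(r^+_\star,z_\star)=0$; this forces $\partial_r g_2^1(r^+_\star,z_\star)=0$, so the trace reduces to the lower-right entry $\partial_z g_2^2(r^+_\star,z_\star)$. Differentiating $g_2^2$ in \eqref{g2B1} gives $\partial_z g_2^2=\tfrac{2\gamma_1\pi}{w^5}\bigl(2(w^2-1)z+w^2\alpha_1\bigr)$, and substituting $z_\star=\tfrac{w^2(w^2\gamma_2-\gamma_1\alpha_1-\beta_2)}{2\gamma_1(w^2-1)}$ produces a cancellation of the $\alpha_1$-- and $\gamma_1$--contributions, leaving $\operatorname{tr}Dg_2(r^+_\star,z_\star)$ equal to a nonzero multiple of $\gamma_2 w^2-\beta_2$. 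Since $w>0$, the sign of the trace is thus controlled exactly by the relation between $\beta_2$ and $\gamma_2 w^2$; combining this with $\det Dg_2>0$ and Theorem \ref{vehulst} then yields the stated criterion.

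I expect the only delicate point to be the bookkeeping in this last step: one must substitute $z_\star$ correctly, verify that $2(w^2-1)z_\star+w^2\alpha_1$ collapses to $w^2(\gamma_2 w^2-\beta_2)/\gamma_1$, and then track the overall sign carefully so that the trace condition is matched to the correct (attracting versus repelling) alternative of the statement. Everything else is a routine application of the Routh--Hurwitz test together with the already-established positivity of $\det Dg_2(r^+_\star,z_\star)$.
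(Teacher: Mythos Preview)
Your approach is essentially the same as the paper's: compute the characteristic polynomial of $Dg_2(r^+_\star,z_\star)$, use the positivity of its constant term $2\pi^2 k_0/w^6$, and apply the Routh--Hurwitz criterion together with Theorem~\ref{vehulst}. The paper simply states the characteristic polynomial $p(\lambda)=\lambda^2+\tfrac{2\pi(\beta_2-\gamma_2 w^2)}{w^3}\lambda+\tfrac{2\pi^2 k_0}{w^6}$ directly, whereas you add the (correct) observation that $\partial_r g_2^1$ vanishes at the fixed point so that the trace equals $\partial_z g_2^2(r^+_\star,z_\star)=\tfrac{2\pi(\gamma_2 w^2-\beta_2)}{w^3}$; this is a minor elaboration, not a different route.
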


\begin{proof}
The characteristic polynomial for Jacobian matrix of $g_2(\mathsf{z})$  at $(r^{\pm}_{\star},z_{\star})$ given by
\begin{eqnarray*}
p(\lambda)=\lambda^2+\dfrac{2\pi (\beta_2-\gamma_2 w^2)}{w^3}\lambda+\dfrac{2 \pi^2 k_0}{ w^6}.
\end{eqnarray*}
Since $\dfrac{2 \pi^2 k_0}{ w^6}>0,$ according to Theorem \eqref{vehulst} and by using the Routh-Hurwitz criteria we have the following results. The roots of $p(\lambda)$ are on the left hand side of the complex plane when $\beta_2<\gamma_2 w^2$. Besides, for $\beta_2>\gamma_2 w^2,$ 
there is at least one root whose real part is positive
and the periodic solution will be unstable.
\end{proof}

\begin{lemma}\label{CorJordan}
Consider
\[
A(\varepsilon)=  
\begin {pmatrix}
0&  0\\ 
0& a^0_{22}
\end{pmatrix}
+\varepsilon
\begin {pmatrix}
a^1_{11}&  a^1_{12}\\ 
a^1_{21}& a^1_{22}
\end{pmatrix}
+\varepsilon^2
\begin {pmatrix}
a^2_{11}&  a^2_{12}\\ 
a^2_{21}& a^2_{22}
\end{pmatrix}+\mathcal{O}(\varepsilon^3),
\]
where
\begin{eqnarray}\label{conditionn}
a^0_{22}\neq 0.
\end{eqnarray} For sufficiently small $|\varepsilon|\neq 0,$ there is an invertible matrix
\begin{align*}
T(\varepsilon)&=\begin {pmatrix}
1& 0\\
0 &1
\end{pmatrix}
+\varepsilon
\begin{pmatrix}
 0 & \dfrac{a^1_{12}}{a^0_{22}}\\
 -\dfrac{a^1_{21}}{a^0_{22}} &0
\end{pmatrix}
+\varepsilon^2
\begin{pmatrix}
0 & \dfrac{a^2_{12} a^0_{22}+ a^1_{12}(a^1_{11}-a^1_{22})}{({a^0_{22}})^2}\\
-\dfrac{a^2_{21} a^0_{22}+ a^1_{21}(a^1_{11}-a^1_{22})}{({a^0_{22}})^2} & 0
\end{pmatrix}
\end{align*}
such that
\begin{eqnarray*}
T^{-1}A(\varepsilon)T=
\begin{pmatrix}
0&0\\
0& a^0_{22}
\end{pmatrix}
+\varepsilon
\begin{pmatrix}
a^1_{11}&0\\
0& a^1_{22}
\end{pmatrix}
+\varepsilon^2
\begin{pmatrix}
\dfrac{a^0_{22} a^2_{11}-a^1_{12}a^1_{21}}{a^0_{22}}
&0\\
0& \dfrac{a^0_{22} a^2_{22}+a^1_{12}a^1_{21}}{a^0_{22}}
\end{pmatrix}+\mathcal{O}(\varepsilon^4).
\end{eqnarray*}
\end{lemma}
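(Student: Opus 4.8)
The plan is to treat Lemma \ref{CorJordan} as a near-identity perturbative diagonalization and to verify the claimed normal form order by order in $\varepsilon$. Write $A(\varepsilon)=A_0+\varepsilon A_1+\varepsilon^2 A_2+\mathcal{O}(\varepsilon^3)$ with $A_0=\mathrm{diag}(0,a^0_{22})$, and seek $T(\varepsilon)=\mathrm{Id}+\varepsilon T_1+\varepsilon^2 T_2$ with $T_1,T_2$ purely off-diagonal. The guiding principle is that conjugating by such a near-identity matrix produces, at each order, a commutator term $[A_0,T_k]$ that can be used to cancel the off-diagonal contributions accumulated at that order. Because $A_0$ has the two distinct eigenvalues $0$ and $a^0_{22}$, the linear map $X\mapsto[A_0,X]$ is a bijection on the space of off-diagonal $2\times2$ matrices: explicitly it sends an off-diagonal $X$ with entries $(x_{12},x_{21})$ to the off-diagonal matrix with entries $(-a^0_{22}x_{12},\,a^0_{22}x_{21})$. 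Condition \eqref{conditionn}, namely $a^0_{22}\neq0$, is precisely what makes this homological map invertible and what produces the denominators $a^0_{22}$ and $(a^0_{22})^2$ appearing in the stated $T(\varepsilon)$.

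First I would expand $T^{-1}=\mathrm{Id}-\varepsilon T_1+\varepsilon^2(T_1^2-T_2)+\mathcal{O}(\varepsilon^3)$ and form the product $T^{-1}A\,T$, collecting coefficients of $\varepsilon^0,\varepsilon^1,\varepsilon^2$. The order-zero term is $A_0$, already diagonal. At order one the coefficient is $[A_0,T_1]+A_1$; imposing that its off-diagonal part vanish forces the homological equation $[A_0,T_1]=-\,\mathrm{offdiag}(A_1)$, whose unique off-diagonal solution has entries $a^1_{12}/a^0_{22}$ and $-a^1_{21}/a^0_{22}$, matching the $\varepsilon$-block of $T(\varepsilon)$, and leaves the diagonal $\mathrm{diag}(a^1_{11},a^1_{22})$ exactly as claimed.

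At order two the coefficient collapses to $[A_0,T_2]+M$, where $M:=A_2+[A_1,T_1]-T_1[A_0,T_1]$ is already determined by $T_1$. Requiring the off-diagonal part of the order-two term to vanish gives $[A_0,T_2]=-\,\mathrm{offdiag}(M)$, whose unique off-diagonal solution is exactly the $\varepsilon^2$-block of $T(\varepsilon)$; the remaining diagonal part $\mathrm{diag}(M)$ must then be shown to equal the stated entries. A short computation gives that $[A_0,T_1]$ is the off-diagonal matrix with entries $-a^1_{12}$ and $-a^1_{21}$, so that $-T_1[A_0,T_1]=\mathrm{diag}(a^1_{12}a^1_{21}/a^0_{22},\,-a^1_{12}a^1_{21}/a^0_{22})$ and $\mathrm{diag}([A_1,T_1])=\mathrm{diag}(-2a^1_{12}a^1_{21}/a^0_{22},\,2a^1_{12}a^1_{21}/a^0_{22})$; adding $\mathrm{diag}(A_2)$ yields precisely $(a^0_{22}a^2_{11}-a^1_{12}a^1_{21})/a^0_{22}$ and $(a^0_{22}a^2_{22}+a^1_{12}a^1_{21})/a^0_{22}$. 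Finally, invertibility of $T(\varepsilon)$ for small $|\varepsilon|\neq0$ is immediate since $\det T(0)=1$ and $\det T$ is continuous, so $T(\varepsilon)$ stays invertible on a neighborhood of the origin.

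The routine bookkeeping at order two is where care is needed: one must correctly assemble all the cross terms $T_1^2A_0$, $T_1A_0T_1$, $T_1A_1$ and $A_1T_1$ coming from $T^{-1}$ and $T$ before the clean commutator structure $[A_0,T_2]+M$ becomes visible. I expect no genuine obstacle here, only the subtlety of verifying that the accumulated off-diagonal part at each order lies in the image of $[A_0,\cdot]$ (automatic, since every off-diagonal $2\times2$ matrix does) and that the $\mathcal{O}(\varepsilon^3)$ tail of $A(\varepsilon)$ conjugates under the polynomial transformation $T(\varepsilon)$ to a tail of the same order, which is all that is required since only the coefficients displayed through order $\varepsilon^2$ enter the stability arguments that follow.
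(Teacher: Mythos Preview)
Your argument is correct. The paper does not supply a proof of Lemma~\ref{CorJordan} at all: it simply states the explicit matrix $T(\varepsilon)$ and the resulting block-diagonal form, leaving the reader to verify the identity by direct multiplication. Your approach is genuinely different and more informative: rather than checking a formula handed down from nowhere, you \emph{derive} $T_1$ and $T_2$ by solving the homological equations $[A_0,T_k]=-\,\mathrm{offdiag}(\cdots)$ at each order, explaining along the way why condition~\eqref{conditionn} is exactly the invertibility of $X\mapsto[A_0,X]$ on off-diagonal matrices. This buys you a conceptual proof that generalizes immediately (to higher order, larger matrices, or other spectral gaps), whereas the paper's implicit verification buys brevity.

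One small remark: the lemma as stated claims a remainder $\mathcal{O}(\varepsilon^4)$, but your analysis correctly produces only $\mathcal{O}(\varepsilon^3)$. Since $T(\varepsilon)$ is a fixed degree-two polynomial, cross terms such as $T_1A_2$, $T_2A_1$, and $(T_1^2-T_2)A_1$ contribute at order~$\varepsilon^3$ and there is no mechanism to kill them; the $\mathcal{O}(\varepsilon^4)$ in the statement appears to be a typo for $\mathcal{O}(\varepsilon^3)$. You already observed this, and it does not affect the downstream application in Proposition~\ref{stabilityB2}, where only the diagonal entries through order~$\varepsilon^2$ are used to read off the signs of $\lambda_1$ and $\lambda_2$.
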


\begin{proposition}\label{stabilityB2}
Consider $\lambda_{1,2}$ as defined in \eqref{valuesB}.
Then,
\begin{itemize}
\item
If $\lambda_{1,2}<0$ (resp. $\lambda_{1,2}>0$) then the periodic solution provided by the Proposition \ref{propEcaseB1} under the conditions $\mathsf{B2}$ is attracting (resp. repelling).
\item
The periodic solution provided by the Proposition \ref{propEcaseB1} under the conditions $\mathsf{B2}$ is unstable whenever $\lambda_1\lambda_2<0.$
\end{itemize}
\end{proposition}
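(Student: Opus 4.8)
The plan is to read off the stability from the moduli of the two eigenvalues of the linearized Poincar\'e map \eqref{poincare} at the fixed point associated with the periodic solution. Since the leading matrix of this linearization will turn out to be non-hyperbolic, Theorem \ref{vehulst} is not available, and I would instead invoke the $k$-determined hyperbolicity apparatus, namely Lemma \ref{CorJordan} followed by Theorem \ref{Murdock}.

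Let $\mathsf{z}^{\ast}(\varepsilon)=(r^{\ast},0)+\mathcal{O}(\varepsilon)$, with $r^{\ast}=4w\sqrt{\eta}$, denote the fixed point furnished by Proposition \ref{propEcaseB1} under the Class $\mathsf{B2}$ hypotheses. Because $\beta_1=\gamma_1 w^2$, the first averaged function collapses to $g_1(r,z)=(0,\lambda_1 z)$, so that $Dg_1\equiv\mathrm{diag}(0,\lambda_1)$ is constant and, by $d\neq 1/w^2$ and $\gamma_1\neq 0$, satisfies $\lambda_1\neq 0$. Writing the linearized map as $D_{\mathsf{z}}\Pi(\mathsf{z}^{\ast}(\varepsilon),\varepsilon)=Id+\varepsilon A(\varepsilon)$ with
\[
A(\varepsilon)=\mathrm{diag}(0,\lambda_1)+\varepsilon\,Dg_2(\mathsf{z}^{\ast}(\varepsilon))+\varepsilon^2 Dg_3(\mathsf{z}^{\ast}(\varepsilon))+\mathcal{O}(\varepsilon^3),
\]
the eigenvalues of the map are $1+\varepsilon\mu_i(\varepsilon)$, where $\mu_1,\mu_2$ are the eigenvalues of $A(\varepsilon)$. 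The leading block $\mathrm{diag}(0,\lambda_1)$ matches the hypothesis of Lemma \ref{CorJordan} with $a^0_{22}=\lambda_1$, so \eqref{conditionn} holds; I would apply the lemma to bring $A(\varepsilon)$ to diagonal form modulo higher order and then use Theorem \ref{Murdock} to identify $\mu_1,\mu_2$ with the resulting diagonal entries.

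The decisive simplification is that the constraint $\beta_2=w^2\gamma_2-\alpha_1\gamma_1$ was imposed exactly to annihilate the first bifurcation function, $f_1\equiv 0$; equivalently $g_2^1(r,0)\equiv 0$, whence $a^1_{11}=\partial_r g_2^1(r^{\ast},0)=f_1'(r^{\ast})=0$. Consequently the eigenvalue springing from the zero diagonal entry carries no $\varepsilon$-term, and Lemma \ref{CorJordan} gives its leading part as the $\varepsilon^2$ coefficient $(\lambda_1 a^2_{11}-a^1_{12}a^1_{21})/\lambda_1$, while the other eigenvalue is $\mu_2=\lambda_1+\mathcal{O}(\varepsilon)$; both are real since the unperturbed eigenvalues $0$ and $\lambda_1$ are distinct. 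The main obstacle is to show that, after inserting the explicit averaged functions $g_2,g_3$ (the third-order one being the function suppressed in Proposition \ref{propEcaseB1}) together with the first-order correction of $\mathsf{z}^{\ast}(\varepsilon)$, this $\varepsilon^2$ coefficient equals $c\,f_2'(r^{\ast})$ for some $c>0$. Here one uses the clean identity $f_2'(r^{\ast})=\lambda_2$, obtained by differentiating $f_2(r)\propto r\,(r^2-16w^2\eta)$ at $r^{\ast}=4w\sqrt{\eta}$ and substituting the definition of $\eta$; the appearance of the third-order parameters $\alpha_2,\beta_3,\gamma_3$ in $\lambda_2$ is consistent with $g_3$ entering $A(\varepsilon)$ at order $\varepsilon^2$. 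This is a lengthy but routine symbolic verification, including the positivity of $c$.

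Granting this, the two real eigenvalues of the Poincar\'e map are $\Lambda_2=1+\varepsilon\lambda_1+\mathcal{O}(\varepsilon^2)$ and $\Lambda_1=1+c\,\varepsilon^3\lambda_2+\mathcal{O}(\varepsilon^4)$. For $|\varepsilon|$ sufficiently small one then has $|\Lambda_2|<1\Leftrightarrow \varepsilon\lambda_1<0$ and $|\Lambda_1|<1\Leftrightarrow \varepsilon\lambda_2<0$. Taking the convention $\varepsilon>0$: if $\lambda_1,\lambda_2<0$ both eigenvalues lie inside the unit circle and the periodic solution is attracting; if $\lambda_1,\lambda_2>0$ both lie outside and it is repelling; and if $\lambda_1\lambda_2<0$ exactly one eigenvalue lies outside, so the fixed point is of saddle type and the periodic orbit is unstable---this last conclusion being independent of the sign of $\varepsilon$. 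This reproduces the three assertions of the proposition.
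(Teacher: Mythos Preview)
Your proposal is correct and follows essentially the same route as the paper: write the Jacobian of the Poincar\'e map at the $\varepsilon$-dependent fixed point as $Id+\varepsilon A(\varepsilon)$, observe that the leading block $\mathrm{diag}(0,\lambda_1)$ meets hypothesis~\eqref{conditionn}, apply Lemma~\ref{CorJordan} to diagonalize $A(\varepsilon)$ up to order $\varepsilon^2$, and then invoke Theorem~\ref{Murdock} to read off the two real eigenvalues $1+\varepsilon\lambda_1+\mathcal{O}(\varepsilon^2)$ and $1+\varepsilon^3\lambda_2+\mathcal{O}(\varepsilon^4)$. The paper carries out the diagonalization by direct symbolic computation of $\Delta_0,\Delta_1,\Delta_2$ (using the explicit expansions $\mathbf{z}_1,\mathbf{z}_2$ of the fixed point), while you instead identify the $(1,1)$ diagonal entry at order $\varepsilon^2$ conceptually: since $g_2^1(r,0)\equiv 0$ under the constraint $\beta_2=w^2\gamma_2-\alpha_1\gamma_1$, one has $a^1_{11}=0$, and then the Lyapunov--Schmidt formulas give $\zeta_1'(r)=-\partial_r g_2^2(r,0)/\lambda_1$, so $a^2_{11}-a^1_{12}a^1_{21}/\lambda_1=f_2'(r^\ast)$ exactly---your constant $c$ is in fact $1$, and the identity $f_2'(r^\ast)=\lambda_2$ follows from $f_2(r)\propto r(r^2-16w^2\eta)$ together with the definitions of $\eta$ and $\lambda_2$. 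This is a tidy shortcut that the paper does not make explicit, but both arguments are the same in spirit.
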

\begin{proof}
Consider the Poincar\'e map
\begin{eqnarray*}
\Pi(\mathbf{z}(\varepsilon),\varepsilon)=
\mathbf{z}+\varepsilon \left(g_1(\mathbf{z})+\varepsilon g_2(\mathbf{z})+\varepsilon^2 g_3(\mathbf{z})\right)+\mathcal{O}(\varepsilon^4),
\end{eqnarray*}
and the Taylor expansion of the Jacobian matrix $D_z\Pi(\mathbf{z}(\varepsilon),\varepsilon)$ around $\varepsilon=0$ as
\begin{eqnarray}\label{DZ}
D_z\Pi(\mathbf{z}(\varepsilon),\varepsilon)=Id+\varepsilon \Gamma_0+\varepsilon^2\Gamma_1+\varepsilon^3 \Gamma_2+\mathcal{O}(\varepsilon^4)=Id+\varepsilon \Gamma(\varepsilon)+\mathcal{O}(\varepsilon^4),
\end{eqnarray}
where $\Gamma_0$ is given by \eqref{gam0} and
condition \eqref{conditionn} is satisfied.
Further,
$\mathbf{z}(\varepsilon)=\mathbf{z}^{\ast}+\varepsilon \mathbf{z}_1+\varepsilon^2 \mathbf{z}_2+\mathcal{O}(\varepsilon^4),$
where
\begin{align*}
\mathbf{z}^{\ast}=&\mathsf{z}_r=\left( r^{\star},0\right), \\\nonumber
\mathbf{z}_1=&\left(-f_3(u^{\ast})\left(Df_2(u^{\ast})\right)^{-1},\zeta_1(u^{\ast})+D\beta(u^{\ast})u_1\right)=\left(-\dfrac{f_3(r^{\star})}{f'_2(r^{\star})},\zeta_1 (r^{\star}) \right),\\\nonumber
\mathbf{z}_2=&\Big(-\frac{1}{2}\left(Df_2(u^{\ast})\right)^{-1}\left(D^2 f_2(u^{\ast})u_1 \odot u_1+2D f_3 (u^{\ast})u_1+2f_4(u^{\ast}) \right),\zeta_2(u^{\ast})+D\zeta_1(u^{\ast})u_1\\\nonumber
&+ D\beta(u^{\ast})u_2+\frac{1}{2}D^2\beta(u^{\ast})u_1\odot u_1 \Big)\\\nonumber
&+\left(\dfrac{f_3(r^{\star})f'_2(r^{\star})}{\left(f'_2(r^{\star})\right)^2}
-\dfrac{f_4(r^{\star})}{\left(f'_2(r^{\star})\right)^2}-\dfrac{f^2_3(r^{\star})f''_2(r^{\star})}{2\left(f'_2(r^{\star})\right)^3},\zeta_2(r^{\star})-\dfrac{f_3(r^{\star})}{f'_2(r^{\star})}\zeta'_1 (r^{\star}) \right).
\end{align*}
Note that for calculation of $f_3$ and $f_4$, four and five-order averaged functions are needed. Duo to the extent of these expressions, we do not mention them here. According to Lemma \eqref{CorJordan} 
an invertible matrix $P(\varepsilon)$  exists
such that the matrix $P^{-1}(\varepsilon) \Gamma(\varepsilon)P(\varepsilon) $ 
is written
as the summation of diagonal matrices
\[
P^{-1}(\varepsilon)  \Gamma(\varepsilon)P(\varepsilon)=\Delta_0+\varepsilon \Delta_1+\varepsilon^2 \Delta_2+\mathcal{O}(\varepsilon^3),
\]
where
\begin{align*}
\Delta_0&=\begin{pmatrix}
0 & 0 \\ 
0 & \dfrac{2\gamma_1 \pi(1-dw^2)}{w}
\end{pmatrix},
\\\nonumber
\Delta_1&=\begin{pmatrix}
0 & 0 \\ 
0 & \dfrac{\pi \bigg(\alpha_1\gamma_1 (1+d w^2)+2 (d w^2-1) w (\pi\gamma_1^2 w^2 d-\gamma_2w-\pi\gamma_1^2)\bigg)}{w^3}
\end{pmatrix},
\end{align*}
and
\begin{align*}
\Delta_2=&\begin{pmatrix}
\dfrac{2\pi \left(w^2\gamma_1^3 d-w^4\gamma_1^3 d^2-w^2\gamma_3+\gamma_1\alpha_2+\alpha_1\gamma_2+\beta_3\right)}{w^3} & 0 \\ 
0 & \delta^2_{1,1} 
\end{pmatrix}.
\end{align*}
Due to the complexity of the expressions in the $\delta^2_{1,1}$, we omit it here.
Further, since $ \Gamma(\varepsilon)$ and $P^{-1}(\varepsilon)  \Gamma(\varepsilon)P(\varepsilon)$ are similar for $\varepsilon>0$ enough small, Theorem \eqref{Murdock} concludes that they have the approximately equal eigenvalues with error $\mathcal{O}(\varepsilon^4)$,
and the eigenvalues of \eqref{DZ} are written as
\begin{eqnarray*}
\lambda_1(\varepsilon)=1+\varepsilon\lambda_1+\mathcal{O}(\varepsilon^2), \,\, \lambda_2(\varepsilon)=1+\varepsilon^3\lambda_2+\mathcal{O}(\varepsilon^4),
\end{eqnarray*}
where $\lambda_1$ and $\lambda_2$ are given by  \eqref{valuesB}. So, the statements are proved straightforwardly.
\end{proof}

\subsection{Bifurcation of an invariant torus}\label{B3}
Consider the hypothesis of Proposition \ref{propEcaseB1}($\mathsf{B1}$). Then, $g_1\equiv 0,$ $g_2$ is given by \eqref{g2B1}, and $g_3=(g_3^1,g_3^2)$ is given by
\begin{align*}\nonumber
g_3^1(r,z)=&\frac {\pi { \gamma_1} ( -5w^{4}+7w^{2}-5) {r}^{3}}{16w^{5}}-{\frac {3\pi { \gamma_1}
( 3w^{4}-5w^{2}+3 ) {z}^{2}r}{w^{9}}}-\frac{\pi {\gamma_1}^2 (w^2-1)}{w^4}r^2\\\nonumber
&-{\frac {\pi ( 6{ \beta_2}w^{2}+14w^{2}{\gamma_1}{ \alpha_1}-18{ \gamma_1}{ \alpha_1}
-6{ \beta_2}+2\gamma_2 w^4+2\gamma_2 w^2) zr}{w^{7}}}
,\\\nonumber
&-\dfrac{\pi w^{2} ( 2{\gamma_3}w^{4}-2w^{2}{\gamma_1}{ \alpha_2}-2w^{2}{ \beta_3}
-3{ \gamma_1}{{ \alpha_1}}^{2}-3{ \alpha_1}{ \beta_2}+w^2\alpha_1 \gamma_2 ) r}{w^{7}}
,\\\nonumber
g_3^2(r,z)=&
\frac {\pi { \gamma_1} (7w^{4}-8w^{2}+7 ) {r}^{2}z}{8w^{5}}+\frac {\pi(3 w^{2}{ \gamma_1}{ \alpha_1}+4{ \beta_2}w^{2}-7{ \gamma_1}{ \alpha_1}-4{ 
\beta_2} ) {r}^{2}}{16w^{3}}+{\frac {2\pi {{ \gamma_1}}^{2}
 ( w^{2}-1 ) rz}{w^{4}}}\\\nonumber
&+\frac {\pi {\gamma_1}(2{\gamma_1}{ \alpha_1}-w^{2}{ \gamma_2}
+{ \beta_2}) r}{2w^{2}}+{\frac {\pi  ( 7w
^{2}{ \gamma_1}{ \alpha_1}-9{ \gamma_1}{ \alpha_1}+2{ 
\beta_2}w^{2}-2{ \beta_2} ) {z}^{2}}{w^{7}}}\\\label{labelg3}
&+\frac {2\pi { \gamma_1} ( 3w^{4
}-5w^{2}+3 ) {z}^{3}}{w^9}+{\frac {\pi(2w^{2}{ \gamma_1}{ \alpha_2}+3{ \gamma_1}{{\alpha_1}}^{2}+2{ \alpha_1}{ \beta_2} ) z}{w^{5}}}.
\end{align*}

\begin{proposition}\label{TorusB}
Let system \eqref{FZ} satisfy the hypothesis of Proposition \ref{propEcaseB1}($\mathsf{B1}$). If $(w^2-1) \gamma_1 \neq 0$, then there is a smooth curve $\beta(\varepsilon)=\beta^{\star}+\varepsilon \beta1+\varepsilon^2 \beta2+\mathcal{O}(\varepsilon^3)$ where $\beta^{\star}=\gamma_2 w^2,$ and interval $J_{\varepsilon}$  containing $\beta(\varepsilon)$ 
so as the parameter $\beta_2$ crosses $\beta(\varepsilon,)$ only one invariant torus emerges from the periodic solution $\phi(t,\beta(\varepsilon),\varepsilon).$
Whenever $\beta_2 \in J_{\varepsilon}$ and $l_{13} (\beta_2-\beta(\varepsilon)) >0,$ such this torus exists while surrounds the periodic solution. Moreover, if $l_{1,3}<0$ (resp. $l_{1,3}>0$) the torus is asymptotically stable (resp. unstable) surrounding the unstable (resp. stable) periodic orbit.
\end{proposition}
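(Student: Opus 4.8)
The plan is to apply the torus-bifurcation theorem, Theorem \ref{thmtorus}, taking $\beta_2$ as the bifurcation parameter $\mu$ and $\beta^\star=\gamma_2 w^2$ as the critical value $\mu_0$. Under the hypotheses of Proposition \ref{propEcaseB1}($\mathsf{B1}$) one has $g_1\equiv 0$, so the first non-vanishing averaged function is $g_2$; hence $l=2$ and the relevant Poincar\'e map reads
\[
\Pi(\mathsf{z},\beta_2,\varepsilon)=\mathsf{z}+\varepsilon^2 g_2(\mathsf{z},\beta_2)+\varepsilon^3 g_3(\mathsf{z},\beta_2)+\mathcal{O}(\varepsilon^4),
\]
with the branch $(r^+_\star(\beta_2),z_\star(\beta_2))$ from Proposition \ref{propEcaseB1} playing the role of the fixed point $\mathsf{x}_\mu$ of $g_2$. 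This is the exact analogue of Proposition \ref{Torus}, but with all orders shifted up by one because the leading averaged function is now $g_2$ rather than $g_1$.

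First I would check hypotheses \textbf{C1} and \textbf{C2}. The characteristic polynomial of $\partial g_2/\partial\mathsf{z}$ at $(r^+_\star,z_\star)$ was already computed in Proposition \ref{stabilityB1} as $p(\lambda)=\lambda^2+\tfrac{2\pi(\beta_2-\gamma_2 w^2)}{w^3}\lambda+\tfrac{2\pi^2 k_0}{w^6}$. Setting $\beta_2=\beta^\star$ kills the linear coefficient and produces the purely imaginary pair $\pm i w_0$ with $w_0=\tfrac{\pi\sqrt{2k_0}}{w^3}>0$ (positivity of $k_0$ along the branch gives $w_0>0$). For $\beta_2$ near $\beta^\star$ the eigenvalues have real part $\alpha(\beta_2)=-\tfrac{\pi(\beta_2-\gamma_2 w^2)}{w^3}$, so the transversality coefficient is $d_0=\alpha'(\beta^\star)=-\pi/w^3\neq 0$. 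This establishes \textbf{C1} and \textbf{C2}, whose implicit-function-theorem consequences furnish the fixed-point branch $\zeta(\beta_2,\varepsilon)$ and the bifurcation curve $\mu(\varepsilon)=\beta^\star+\mathcal{O}(\varepsilon)$.

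Next I would carry out the three reductions of Proposition \ref{Torus}: the shift $\mathsf{z}=\mathsf{x}+\zeta(\mu,\varepsilon)$, the parameter shift $\beta_2=\sigma+\beta^\star$, and a linear change $\mathsf{y}=\mathsf{V}\mathsf{x}$ placing the linearization in real Jordan form as demanded by \textbf{B3}. Since $l=2$, this linearization takes the form $D_{\mathsf{y}}H_\varepsilon(0,0)=Id+\varepsilon^2\mathsf{A}_\varepsilon+\mathcal{O}(\varepsilon^4)$, with the rotation block entering at order $\varepsilon^2$. I would then read off the quadratic and cubic coefficients of $H_\varepsilon$ through \eqref{BC} to obtain $B_\varepsilon$ and $C_\varepsilon$, feed them into the Lyapunov-coefficient formula \eqref{FormulaLyapunov}, and expand $l_1^\varepsilon=\varepsilon l_{1,1}+\varepsilon^2 l_{1,2}+\varepsilon^3 l_{1,3}+\mathcal{O}(\varepsilon^4)$, expecting $l_{1,1}=l_{1,2}=0$ so that $j^\star=3$, consistent with the statement.

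With $j^\star=3$ and $d_0=-\pi/w^3<0$, Theorem \ref{thmtorus} yields every assertion. In particular, dividing the torus-existence condition $l_{1,3}(\beta_2-\beta(\varepsilon))\,d_0<0$ of part (ii) by the negative number $d_0$ flips the inequality to $l_{1,3}(\beta_2-\beta(\varepsilon))>0$, matching the stated region; the stability dichotomy (torus asymptotically stable when $l_{1,3}<0$, unstable when $l_{1,3}>0$, with the periodic orbit carrying the opposite stability) follows directly from parts (i)--(iii). The main obstacle is the Lyapunov-coefficient computation: because the leading order of the map is $\varepsilon^2$, both $g_2$ and the lengthy $g_3$ contribute to $B_\varepsilon$ and $C_\varepsilon$, and pushing $l_1^\varepsilon$ correctly to order $\varepsilon^3$ requires retaining several subleading terms of $\zeta$, $\mu(\varepsilon)$ and $\mathsf{V}$ --- an algebraically heavy but entirely mechanical symbolic calculation best delegated to a computer-algebra system.
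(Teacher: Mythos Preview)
Your proposal is correct and follows essentially the same route as the paper: verify \textbf{C1}--\textbf{C2} from the characteristic polynomial of $Dg_2$ at the fixed-point branch (the paper records $w_0=\sqrt{2}\pi|\alpha_1\gamma_1|/w^3$ and $d_0<0$, in agreement with your computation), perform the shift, parameter shift, and Jordan-form change to build $H_\varepsilon$, compute $B_\varepsilon,C_\varepsilon$ and the Lyapunov expansion to find $l_{1,1}=l_{1,2}=0$ and $l_{1,3}=\tfrac{(41w^4-67w^2+41)\pi\gamma_1}{128w^5}$, and then read off the conclusions from Theorem~\ref{thmtorus} using $d_0<0$ exactly as you describe.
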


\begin{proof}
The arguments of the proof are very similar to those used in the Theorem \ref{Torus}.
Assume $\beta^{\star}=\gamma_2 w^2$ and $p(\lambda)$ denotes the characteristic polynomial of the Jacobian matrix $\dfrac{\partial g_2}{\partial \mathbf{z}}(\mathbf{z}_{\beta_2},\beta_2).$ The roots of $p(\lambda)$ is given by $\lambda(\beta_2)=\alpha(\beta_2)\pm i \eta(\beta_2),$ where
\begin{eqnarray*}\label{H1H2}
\alpha(\beta^{\ast})=0,\,\, 
\eta(\beta^{\ast})=\dfrac{\sqrt{2}\pi |\alpha_1\gamma_1| }{ w^3}=w_0, \,\, d_0=\dfrac{d\alpha(\beta_2)}{d\beta_2}\bigg|_{\beta_2=\beta^{\star}}=-\pi < 0.
\end{eqnarray*}
So, the hypotheses $\textbf{C1}$ and $\textbf{C2}$ are verified. Now, by using the implicit function theorem and these hypotheses, there are functions $\mathsf{z}_1$ and $\beta1$ such that
\begin{eqnarray*}
\zeta(\beta_2,\varepsilon)=\mathbf{z}_{\beta_2} +\varepsilon \mathsf{z}_1+\mathcal{O}(\varepsilon^2) \,\, \mbox{and} \,\, \mu(\varepsilon)=\beta^{\ast}+\varepsilon \beta1+\mathcal{O}(\varepsilon^2).
\end{eqnarray*}
We ignore these functions here due to cumbersome expressions.

Consider the change of variable 
$\mathbf{z}=\mathbf{x}+\zeta(\mu,\varepsilon),$  the change of parameter $\beta_2=\sigma+\gamma_2 w^2,$ and the linear change of variable 
\begin{eqnarray*}
\mathsf{x}=\mathsf{V} \mathsf{y}=\begin{pmatrix}
\dfrac{1}{2} & 
-\dfrac{\sqrt{2} \left(4\sqrt{2}\gamma_1 w^5+5 \alpha_1 w^4-8\sqrt{2}\gamma_1 w^3-7  \alpha_1 w^2+4\sqrt{2}
\gamma_1 w+5\alpha_1\right)}{16 w^2 (w^2-1)^2}\varepsilon
\\ 
0 & \dfrac{1}{4} w^2 
\end{pmatrix}\mathsf{y}.
\end{eqnarray*}
Then, we obtain the map
\begin{eqnarray*}
 H_{\varepsilon}(\mathbf{y},\sigma)=\mathsf{V}^{-1} \Pi\left( \mathsf{V}\mathbf{y}+\zeta(\mu(\varepsilon)+\beta_2,\varepsilon),\mu(\varepsilon)+\beta_2,\varepsilon\right),
\end{eqnarray*}
where
\begin{eqnarray*}
\Pi(\mathbf{z},\mu,\varepsilon)=\mathbf{z}+\sum_{i=1}^{3} \varepsilon^i g_i(\mathbf{z},\mu)+\mathcal{O}(\varepsilon^{4}).
\end{eqnarray*}
Now, the Taylor expansion of 
\begin{eqnarray*}
 D_{\mathbf{y}} H_{\varepsilon}(0,0)=Id+\varepsilon\mathsf{A}_{1}+\varepsilon^2 \mathsf{A}_{2}+\varepsilon^3\mathsf{A}_{3},
\end{eqnarray*}
in real Jordan form is given by
\begin{align*}
\begin{pmatrix}
1 & 0 \\ 
0 & 1
\end{pmatrix} +
\varepsilon^2
\begin{pmatrix}
0 & \dfrac{\sqrt{2}\pi\alpha_1 \gamma_1}{w^3} \\ 
-\dfrac{\sqrt{2}\pi\alpha_1 \gamma_1}{w^3} & 0
\end{pmatrix}.
\end{align*}
Therefore, we have the map
\begin{equation*}
\mathsf{y}\mapsto H_{\varepsilon}(\mathsf{y},\sigma)=\left( H^1_{\varepsilon}(\mathsf{y},\sigma),H^2_{\varepsilon}(\mathsf{y},\sigma) \right).
\end{equation*}
In the end, we calculate  the Lyapunov coefficient \eqref{FormulaLyapunov} where $\mathbf{p}=\left(\dfrac{1}{2},-\dfrac{i}{\sqrt{2}}\right),$ $e^{i \theta_{\varepsilon}}=1+\varepsilon (i w_0)+\mathcal{O}(\varepsilon^2),$ and
\begin{align*}
B_{\varepsilon}(u,v)&={\scalebox{0.9}{$\left(\dfrac{\gamma_1 \pi \mbox{sign}(\alpha_1\gamma_1) (w^2-1) (u_1 v_2+u_2v_1)}{2w^3} \varepsilon^2, -\dfrac{\gamma_1 \mbox{sign}(\alpha_1\gamma_1)\pi (w^2-1) (2u_2v_2+u_1 v_1)}
{2w^3}\varepsilon^2\right)+\mathcal{O}(\varepsilon^3),$}}\\\nonumber
C_{\varepsilon}(u,v,w)&=\bigg(
\frac{3\pi \gamma_1 \left( 4 \left( 5w^2-3w^4-3 \right)  \left( u_2v_2w_1+u_2v_1w_2+u_1v_2w_2 \right) +u_1v_1w_1 \left( 5w^4-7w^2+5 \right)  \right) {\varepsilon }^3}{32w^5},\\\nonumber
& \,\,\frac {\pi \gamma_1 \left(\left( 5-7w^{2}+5w^{4}\right)  \left( w_1v_1u_2+u_1w_1v_2
+u_1v_1w_2 \right) +6 \left( 3w^{4}-5w^{2}+3 \right) u_2v_2w_2\right) }{8w^5}\varepsilon^3
\bigg)\\\nonumber
& \,\,+\mathcal{O}(\varepsilon^4).
\end{align*}
Now, by expanding $l^1_{\varepsilon}$ around $\varepsilon=0,$ we have
\begin{equation*}
l_1^{\varepsilon}=\varepsilon l_{1,1}+\varepsilon^2 l_{1,2}+\varepsilon^3 l_{1,3}
+\mathcal{O}(\varepsilon^4),
\end{equation*}
where 
\[
l_{1,1}=l_{1,2}=0, \,\, \mbox{ and } \,\, l_{1,3}=\dfrac{ (41w^4-67w^2+41) \pi\gamma_1 }{128 w^5}.
\]
The existence and stability of this torus will be confirmed by using the Theorem \ref{thmtorus}.
\end{proof}

\section{Numerical Simulations}\label{sec:ex}

In Theorems \ref{teoA} and \ref{teoB}, we provided sufficient conditions for the existence of invariant tori within the parametric family of the FitzHugh-Nagumo system \ref{FZ}. Here, we present numerical examples for each result mentioned in these theorems. It is important to highlight that all coefficients used in this section were carefully chosen based on the criteria specified in Theorems \ref{teoA} and \ref{teoB}. The invariant sets provided here may be difficult to detect using numerical tools alone, as they occur in a very narrow region of the parameter space. In this section, we utilize the Backward Differentiation Formula (BDF) to solve the stiff ordinary differential equations arising from our study. The BDF method, known for its stability properties, is particularly well-suited for handling the rapid changes in the solution that characterize stiff ODEs. We implement a higher-order BDF method.

\subsection{Example 1}
Consider system \eqref{FZ} with the coefficients \eqref{abcA} that satisfy the relationships defined in Theorem \ref{teoA} of family A. The following coefficients correspond to the existence of an invariant torus in the system:
\begin{align*}
a &= -\dfrac{8}{19} + 128 \sqrt{2}\, \epsilon, \,\, & b &= \frac{1}{152}\sqrt{\dfrac{3869}{19}}, \,\, & c &= \frac{1}{64}\sqrt{\dfrac{3869}{19}} +\dfrac{671757 }{2007040000}\,\epsilon, \\
d &= \dfrac{17}{16}, \,\, & \omega &= \dfrac{39}{64}, \,\, & \epsilon &= \dfrac{1}{5000}.
\end{align*}
In this case, the second term of the expansion in epsilon of the Lyapunov coefficient is given by
$$
\ell_{12} = \dfrac{34359738368 \pi  \left(371234123 \sqrt{73511}+3835076608 \pi \right)}{6718817122378946014833}.
$$
In Figure \ref{torA}, the invariant tori in system \eqref{FZ} and its corresponding invariant circle in the Poincar\'e section of the invariant torus are shown.

\begin{figure}[h!]
\begin{minipage}{0.45\textwidth}
    \centering
    \includegraphics[width=6cm]{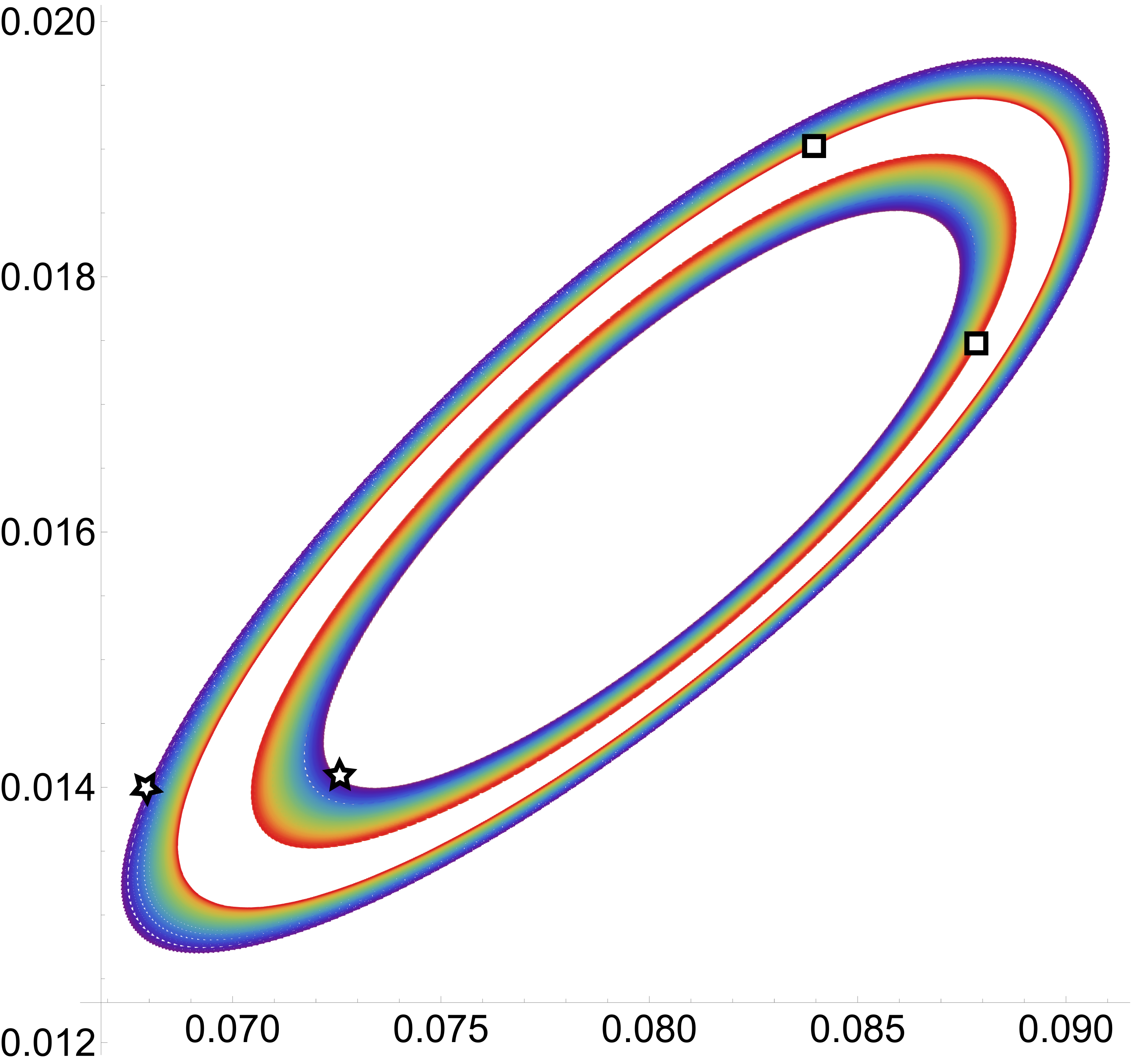}
\end{minipage}
\begin{minipage}{0.45\textwidth}
    \centering
    \includegraphics[width=8cm]{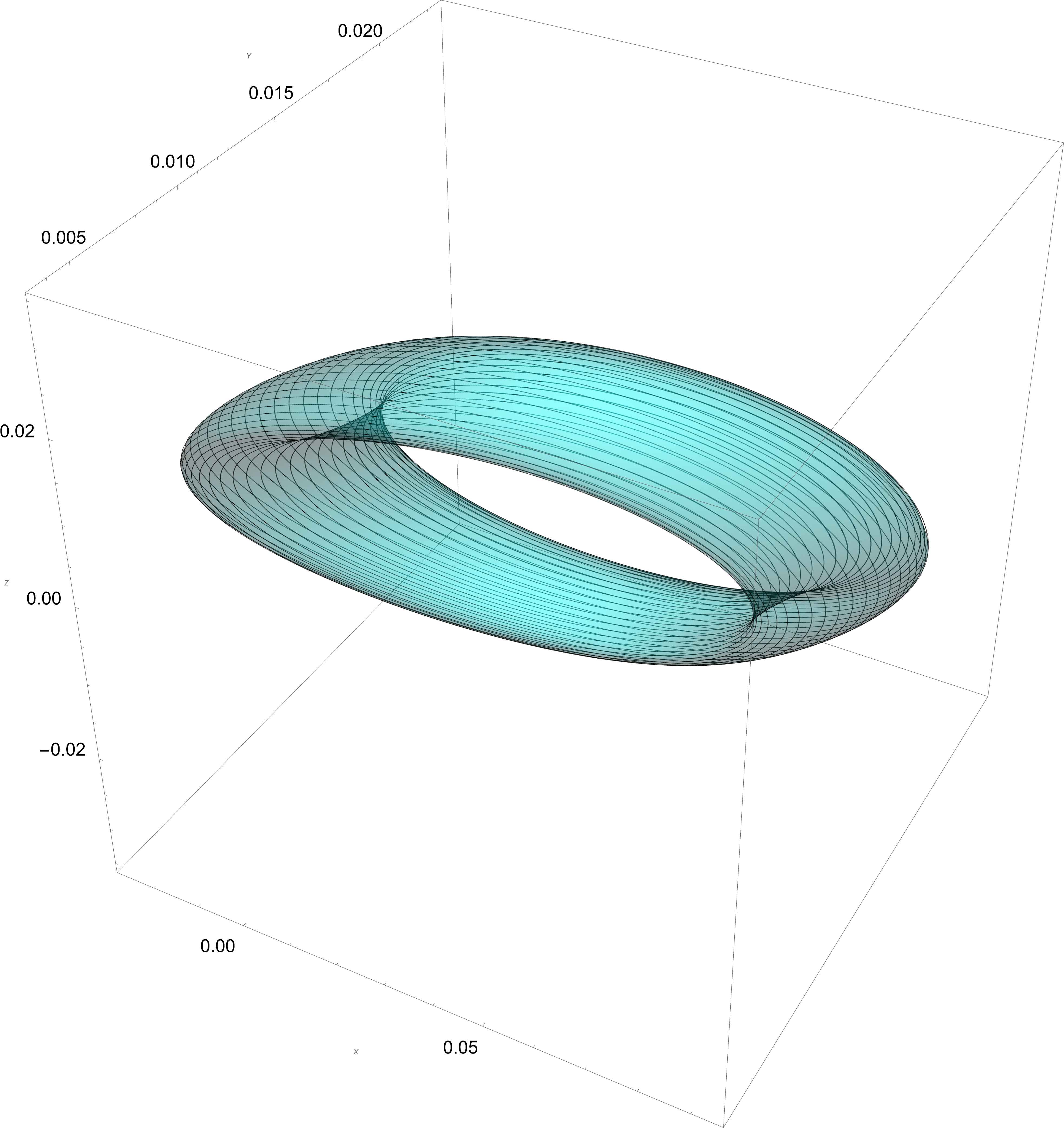}
\end{minipage}
\caption{Left: Solutions starting at $(x_1, y_1,z_1) = (0.06771653, 0.01286177, 0)$ and $(x_2, y_2,z_2) = (0.0724458254, 0.014088585, 0)$ spiraling around the closed invariant curve at the Poincaré section $z=0$ of system \eqref{FZ} with coefficients \eqref{abcA}.The starting points are represented by the star symbol and the endpoints by the square symbol. Right: This figure shows, in solid line, the solution starting at point $p = (0.0692714, 0.0133573, 0)$ entering the vicinity of the invariant torus, represented by the smooth surface, for the system with coefficients $a = -0.3848488$, $b = 0.09388127$, $c = 0.2229681$, and $d = 0.6093750$.  \label{torA}} 
\end{figure}

\subsection{Example 2}
Consider system \eqref{FZ} with the coefficients \eqref{abcB} that satisfy the relationships defined in case B1 of family B. The following coefficients correspond to the existence of an invariant torus in the system:
\begin{align*}
a &= \frac{1}{4} + \frac{1}{2} \epsilon - \epsilon^2 + \dfrac{3}{100} \epsilon^3, \,\, & b &= \dfrac{1}{4} \epsilon - 1.000435384 \epsilon^2 + \dfrac{1}{50} \epsilon^3, \,\, & c &= \epsilon - 4 \epsilon^2 + \dfrac{1}{100} \epsilon^3, \\
d &= 4, \,\, & \omega& = \dfrac{1}{2}, \,\, & \epsilon &= \frac{1}{20}.
\end{align*}

In this case, the third term of the expansion in epsilon of the Lyapunov coefficient is given by 
$$
\ell_{1,3} = -\frac{10725}{8} \pi,
$$
indicating that the system undergoes a Neimark-Sacker bifurcation. Figure \ref{torB} shows the Poincaré section of the invariant torus; here we use the $(r,z)$ cylindrical coordinates to localize the invariant torus. We also show the torus in the original coordinates of \eqref{FZ}.

\begin{figure}[h!]
\begin{minipage}{0.45\textwidth}
    \centering
    \includegraphics[width=6cm]{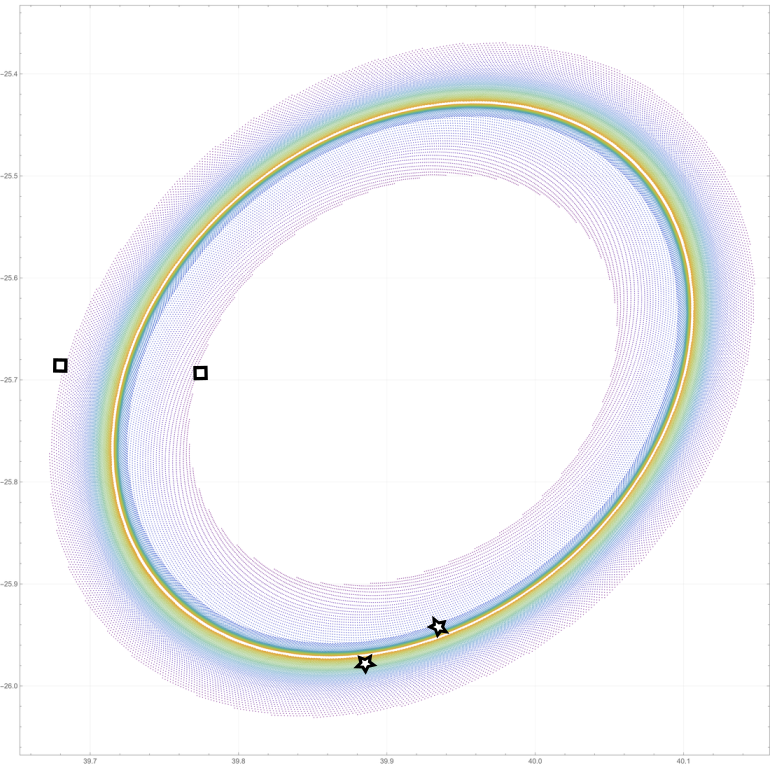}
\end{minipage}
\begin{minipage}{0.45\textwidth}
    \centering
    \includegraphics[width=8cm]{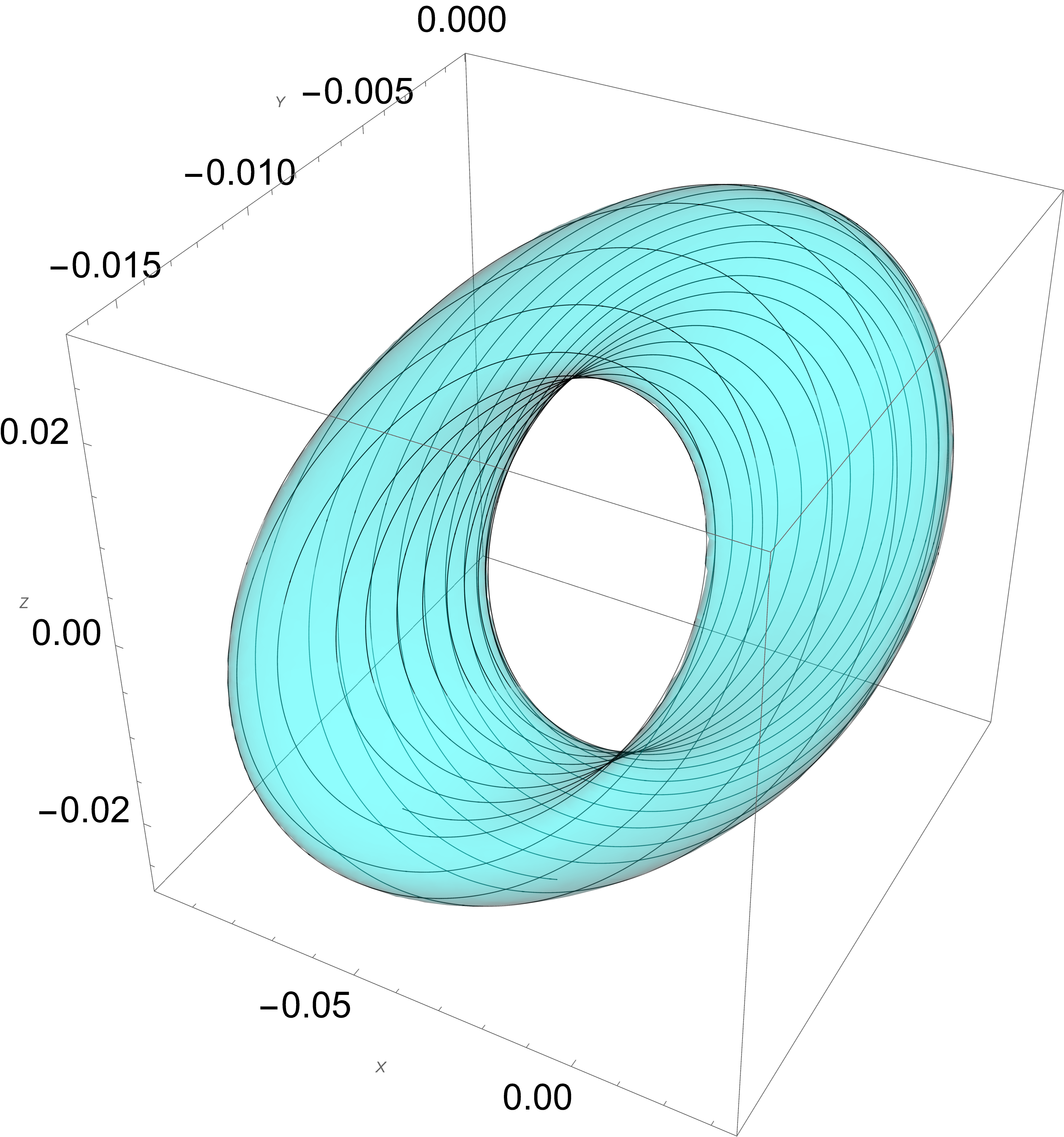}
\end{minipage}
\caption{Left: Solutions starting at $(r_1, z_1) = (39.9008, -25.5074)$ and $(r_2, z_2) = (39.8987, -25.3815)$ spiraling around the closed invariant curve at the Poincaré section of the system in cylindrical coordinates. The starting points are represented by the star symbol and the endpoints by the square symbol. Right: This figure depicts, in solid line, the solution starting at point $p = (-0.05039858, -0.009134, -0.0134404)$ leaving the vicinity of the invariant torus, represented by the smooth surface, for the system with coefficients $a = 0.3001$, $b = -0.05010014$, $c = -0.2004004$, and $d = 4$. \label{torB}}
\end{figure}

\subsection{Example 3}
Consider system \eqref{FZ} with the coefficients \eqref{abcB} that satisfy the relationships defined in family B, satisfying case B2 of Theorem \ref{teoB}. The following coefficients satisfy the conditions for the existence of a small periodic solution:
\begin{align*}
a &= \dfrac{121}{49} + \frac{15 }{7}\e + 2 \e^2 + \e^3, \,\, && b = \dfrac{1452 }{343}\e - \frac{1502 }{343}\e^2 + \e^3, \,\, && c = \dfrac{12 }{7}\e - \frac{2 }{7}\e^2 + \e^3, \\
d &= \dfrac{10}{7}, \,\, && \omega = \dfrac{11}{7}, \,\, && \e = \dfrac{1}{15}.
\end{align*}

Under these conditions, the small periodic solution (see Figure \ref{fop}) that bifurcates from the origin is attracting since
$$
\lambda_1 = -\dfrac{20808 \pi }{3773} \,\, \text{and} \,\, \lambda_2 = -\dfrac{424228304 \pi }{14235529}.
$$

\begin{figure}[h!]
\begin{minipage}{0.45\textwidth}
    \centering
    \includegraphics[width=6cm]{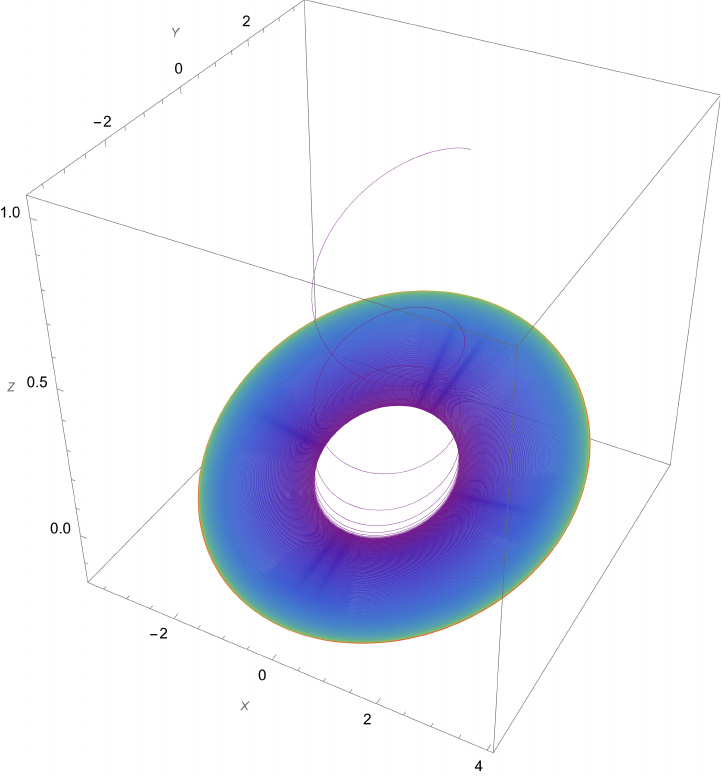}
\end{minipage}
\begin{minipage}{0.45\textwidth}
    \centering
    \includegraphics[width=8cm]{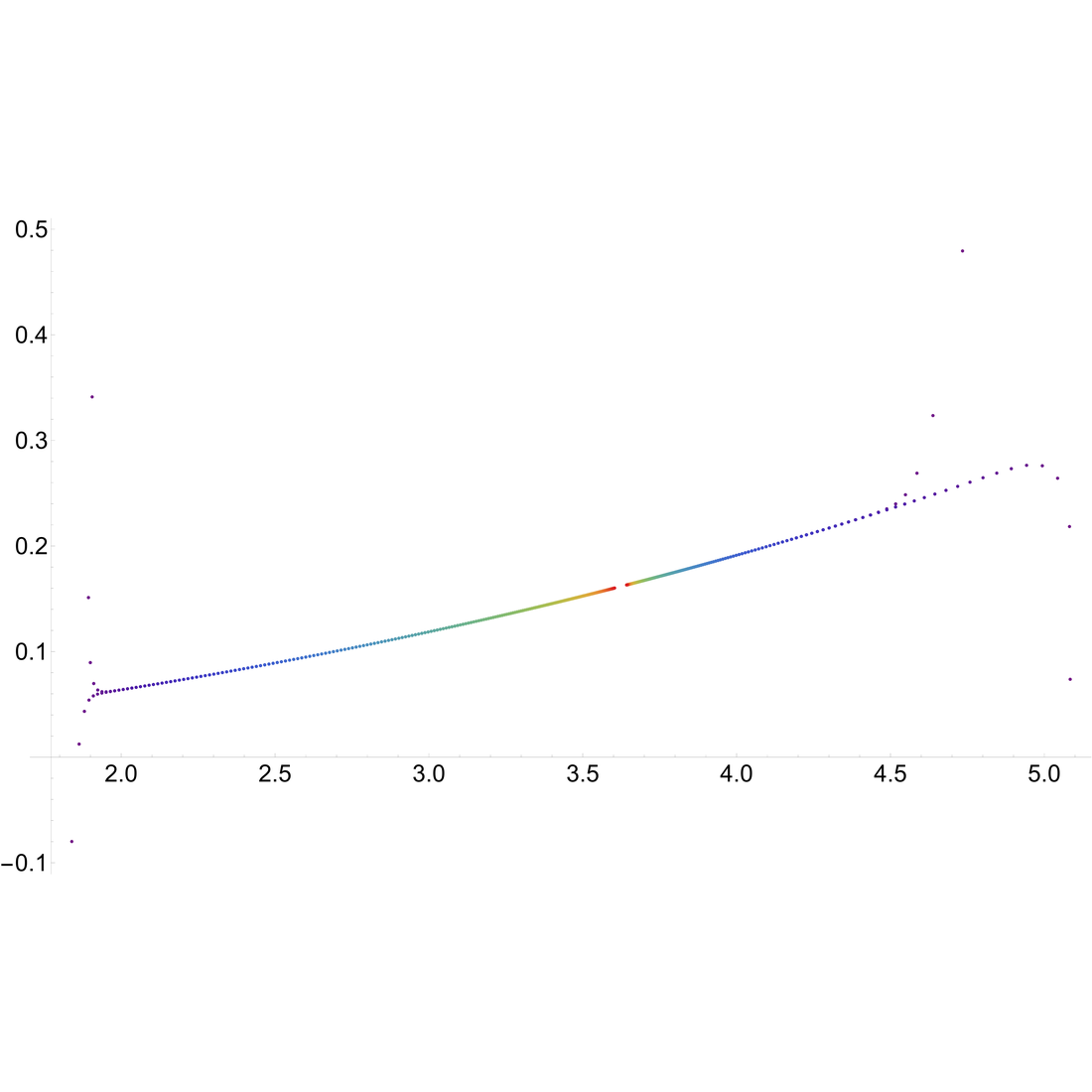}
\end{minipage}
\caption{Left: The solution starting at $p = (1, 1, 1)$ being attracted by the nearby stable periodic solution. Right: The Poincaré section $y = 0$ of the periodic solution. We can see four nearby solutions being attracted to the corresponding fixed point.\label{fop}}
\end{figure}

\section{Conclusion}
In this study, we have thoroughly examined the FitzHugh-Nagumo system, a model for nerve axon excitation and neural signal propagation, focusing particularly on the scenarios where the system exhibits a Hopf-zero equilibrium at the origin. By considering two distinct parameter families, {\bf Family A} and {\bf Family B}, we aimed to extend previously established results on periodic solutions by exploring the existence and stability of invariant tori arising from periodic solutions.

For {\bf Family A}, characterized by a first set of parameter relationships that yield a pair of purely imaginary conjugate eigenvalues and a zero eigenvalue, we re-prove a generic condition on the parameters providing the birth of a periodic solution as a parameter $\e$ crosses zero. We demonstrated the convergence of this solution to the origin and analyzed its stability, showing that the nature of this periodic solution (attracting or repelling) depends on a parameter relationships. Moreover, we identified a bifurcation of an invariant torus from the periodic solution, providing a deeper understanding of the dynamical behavior as the system parameters evolve.

{\bf Family B}, characterized by a second set of parameter relationships that yield a pair of purely imaginary conjugate eigenvalues and a zero eigenvalue, introduced a more intricate scenario, which was divided into two classes, namely, {\bf Class B1} and {\bf Class B2}. For {\bf Class B1}, we revisited and confirmed the existence of a periodic solution previously identified. In addition, its stability characteristics was characterized in terms of relationships on the parameters. Furthermore, we uncovered conditions under which this periodic solution undergoes a torus bifurcation, leading to the emergence of invariant tori. In {\bf Class B2}, we tackled the challenge posed by a continuum of zeros in the first-order averaging function, a scenario not previously investigated. Using advanced averaging theory combined with the Lyapunov-Schmidt reduction, we provided a framework for understanding the bifurcation of periodic solutions in this class. This approach allowed us to delineate the conditions under which these periodic solutions are stable or unstable, shedding light on the complex dynamics that can arise in the FitzHugh-Nagumo system.

\section{Acknowledgments}

MRC is supported by S\~{a}o Paulo Research Foundation (FAPESP) grant 2023/06076-0. DDN is supported by S\~{a}o Paulo Research Foundation (FAPESP) grants 2018/13481-0, 2019/10269-3, and 2022/09633-5, and by Conselho Nacional de Desenvolvimento Cient\'{i}fico e Tecnol\'{o}gico (CNPq) grant 309110/2021-1. NS is supported by S\~{a}o Paulo Research Foundation (FAPESP) grant 2023/04851-7 and by Institute for Research in Fundamental Sciences (IPM) grant 1403370046. NS also thanks Dr. Marco Antonio Teixeira for his support and useful comments.

\end{document}